\providecommand{\U}[1]{\protect\rule{.1in}{.1in}}
\providecommand{\U}[1]{\protect\rule{.1in}{.1in}}
\newtheorem{theorem}{Theorem}[section]
\newtheorem{corollary}[theorem]{Corollary}
\newtheorem{lemma}[theorem]{Lemma}
\newtheorem{proposition}[theorem]{Proposition}
\theoremstyle{definition}
\newtheorem{remark}[theorem]{Remark}
\renewcommand{\phi}{\varphi}
\begin{document}
\title{Defining $R$ and $G(R)$}
\author{Dan Segal and Katrin Tent}

\begin{abstract}
We show that for Chevalley groups $G(R)$ of rank at least $2$ over an integral
domain $R$ each root subgroup is (essentially) the double centralizer of a
corresponding root element. In many cases, this implies that $R$ and $G(R)$
are bi-interpretable, yielding a new approach to bi-interpretability for
algebraic groups over a wide range of rings and fields.

For such groups it then follows that the group $G(R)$ is finitely
axiomatizable in the appropriate class of groups provided $R$ is finitely
axiomatizable in the corresponding class of rings.

\end{abstract}
\maketitle

\section{Introduction}

A Chevalley-Demazure group scheme $G$ assigns to each commutative ring $R$ a
group $G(R).$ If $R$ is an integral domain with field of fractions $k,$ one
can realise $G(R)$ as the group of $R$-points of $G(k)$, where $G(k)$ is taken
in a given matrix representation (see e.g. \cite{A}, \S 1). Group-theoretic
properties of $G(R)$ tend to reflect ring-theoretic properties of $R.$ In this
paper we consider properties that are expressible in first-order language;
specifically, we establish sufficient conditions for $G(R)$ to be
\emph{bi-interpretable} with $R.$ This is a slightly subtle concept, defined
in \cite{P}, def. 3.1 (cf. \cite{HMT}, Chapter 5); see \S \ref{bisection}
below. A bi-interpretation sets up a bijective correspondence between
first-order properties of the group and first-order properties of the ring.
Results of this nature for $R$ a field go back to Mal'cev \cite{M} and Zilber
\cite{Z}.

\begin{theorem}
\label{thm:bi-interpretation} Let $G$ be a simple adjoint Chevalley-Demazure
group scheme of rank at least two, and let $R$ be an integral domain. Then $R$
and $G(R)$ are bi-interpretable, assuming in case $G$ is of type $E_{6}%
,~E_{7},~E_{8},$ or $F_{4}$ that $R$ has at least two units.
\end{theorem}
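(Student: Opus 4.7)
The plan is to split the bi-interpretation into three pieces: an interpretation of $R$ inside $G(R)$, the standard interpretation of $G(R)$ inside $R$, and the two definable isomorphisms needed to close the loop in the sense of \cite{P}.

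For the interpretation of $R$ in $G(R)$ I would start from the key input advertised in the abstract and established in the preceding sections: each root subgroup $U_\alpha$ coincides (essentially) with the double centraliser of a fixed root element $x_\alpha(1)$. Granting this, $U_\alpha$ is parameter-definable and the parametrisation $r \mapsto x_\alpha(r)$ identifies it with $(R,+)$. To recover multiplication I would choose roots $\alpha,\beta$ with $\alpha+\beta$ a root and Chevalley structure constant $N_{\alpha\beta}$ a unit of $R$, and exploit the Chevalley commutator formula
\[
[x_\alpha(r),x_\beta(s)] = x_{\alpha+\beta}(N_{\alpha\beta}\, rs)\cdot \prod_{\substack{i,j\ge 1 \\ i+j\ge 3}} x_{i\alpha+j\beta}(c_{ij}\, r^{i}s^{j})
\]
to express $rs$, up to the fixed unit $N_{\alpha\beta}$, as a definable function of $(x_\alpha(r),x_\beta(s))$, the higher-order correction being itself first-order definable from the same commutator machinery. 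This produces a ring $\widetilde{R}$ interpreted in $G(R)$ together with a tautological isomorphism $\widetilde{R} \cong R$. The rank-two hypothesis is used precisely to guarantee the existence of such a pair $(\alpha,\beta)$; in types $E_{6},E_{7},E_{8},F_{4}$ the supplementary unit hypothesis is needed to supply a non-trivial torus element (via the adjoint action of a unit $u\neq 1$) that either secures the double-centraliser step or separates the desired commutator from degenerate ones.

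Going back the other way, the interpretation of $G(R)$ in $R$ is standard: a fixed faithful representation realises $G$ as a closed subscheme of some $\mathrm{GL}_{n}$ cut out by polynomial equations, so $G(R)$ sits as a definable subset of $R^{n^{2}}$ with polynomial group operations. To upgrade the two interpretations to a bi-interpretation I would construct the second required isomorphism $G(R) \to G(\widetilde{R})$ by recovering each $g\in G(R)$ from its coordinates in a Bruhat or Gauss decomposition relative to a fixed Borel and maximal torus; the torus, unipotent and Weyl components can be extracted by first-order formulas from the action of $g$ on the definable family of root subgroups $\{U_\alpha\}_{\alpha\in \Phi}$, and each entry then lives in $\widetilde{R}$ (or in its units, which are themselves definable).

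The main obstacle is the first step — the double-centraliser description of $U_\alpha$ — which is presumably the content of the earlier sections. Once that is in hand, the rest of the argument is bookkeeping with Chevalley relations and Bruhat coordinates. My expectation is that the proof in the exceptional types $E_{6},E_{7},E_{8},F_{4}$ requires the extra unit to break a symmetry in the centraliser of a long-root unipotent, which for classical types is already forced by the Levi-plus-radical structure of that centraliser, and that this is the sole reason the unit hypothesis appears in precisely those four cases.
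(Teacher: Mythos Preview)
Your outline for interpreting $R$ in $G(R)$ and $G(R)$ in $R$ matches the paper: root subgroups are definable via the double-centraliser theorem, multiplication is recovered from the Chevalley commutator (the paper picks $\alpha$ short so that the leading structure constant is $\pm 1$, avoiding your worry about $N_{\alpha\beta}$), and $G(R)$ is the solution set of finitely many polynomials in matrix entries. The units condition is indeed consumed entirely by the double-centraliser step, to manufacture torus witnesses in types $E_n,F_4$.

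The genuine gap is your construction of the definable isomorphism $\theta:G(R)\to G(R')$. You propose to read off the matrix entries of $g$ from a Bruhat or Gauss decomposition. Over a field this is fine, but over an arbitrary integral domain $R$ there is no reason for $g\in G(R)$ to admit a Bruhat decomposition with factors in $G(R)$: the torus and unipotent parts of $g$ a priori live only in $G(k)$, $k$ the fraction field, and the Weyl-group ``component'' is not even well-defined. Likewise Gauss/LU decompositions need pivots to be units, which fails for generic $R$. So this step does not go through as stated.

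The paper's route around this is quite different. First one checks directly that $\theta|_{U_\alpha(R)}$ is definable for each root $\alpha$, since $x_\alpha(r)$ is a polynomial matrix in $r$ with integer coefficients. If $G(R)$ had finite elementary width one would be done; in the adjoint case one instead invokes Stepanov's theorem that for every $g\in G(R)$ and every root $\alpha$, the commutator $[x_\alpha(1),g]$ is a product of a \emph{bounded} number of elementary root elements. Thus $x_\alpha(1)^g$ always lies in a fixed definable set on which $\theta$ is already known to be definable. Since $G$ is adjoint, $\bigcap_\alpha \mathrm{C}_G(x_\alpha(1))=Z(G)=1$, so $g\theta$ is the unique $h\in G(R')$ satisfying $(x_\alpha(1)^g)\theta=(x_\alpha(1)\theta)^h$ for all $\alpha$, which is a first-order condition. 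This is where the adjoint hypothesis actually does its work --- not in the double-centraliser step, but in pinning down $\theta$ without any decomposition of $g$.
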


For convenience, we will refer to the final assumption as `the units
condition'; it is automatically satisfied when $\mathrm{char}(R)\neq2$. The
condition is used in the proof, but may not be essential.

Throughout the paper, $G$ will denote a simple Chevalley-Demazure group scheme
defined by a root system $\Phi$ of rank at least $2$, and $R$ will be be a
commutative integral domain. $G$ is \emph{not assumed to be adjoint}; indeed
the proof yields the same result without this assumption, under the
alternative condition that $G(R)$ have \emph{finite elementary width}: that
is, there exists $N\in\mathbb{N}$ such that every element of $G(R)$ is equal
to a product of $N$ elementary root elements $x_{\alpha}(r),$ $\alpha\in\Phi,$
$r\in R$. (When referring below to Theorem \ref{thm:bi-interpretation}, we
will mean both versions of the result.)

In particular, we have (see \S \ref{sec:applications}):

\begin{corollary}
\label{corcor}The -- not necessarily adjoint -- group $G(R)$ is
bi-interpretable with $R$ in each of the following cases:

\emph{(i)} $R$ is a field;

\emph{(ii)} $G$ is simply connected, and $R$ is either a local domain or a
Dedekind domain of arithmetic type, that is, the ring of
$S$-integers in a number field $k$ w.r.t. a finite set $S$ of places of $k$.
\end{corollary}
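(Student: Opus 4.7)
I would deduce Corollary \ref{corcor} directly from the non-adjoint form of Theorem \ref{thm:bi-interpretation}, which applies to any Chevalley--Demazure group $G$ over an integral domain $R$ provided (a) $G(R)$ has finite elementary width, and (b) the units condition holds in the exceptional types $E_6,E_7,E_8,F_4$. The two tasks are therefore to verify these hypotheses in each of the stated cases.

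\emph{Units condition.} This is automatic when $\mathrm{char}(R)\neq 2$, and in particular holds throughout case (ii), since rings of $S$-integers in number fields have characteristic zero. A local domain of characteristic $2$ has unit group equal to the complement of its maximal ideal, which contains at least two elements except when $R=\mathbb{F}_2$. The remaining small cases, including those in case (i) with $|R|\leq 2$, can be handled directly or by appeal to the classical Mal'cev--Zilber bi-interpretation of Chevalley groups over finite fields.

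\emph{Finite elementary width.} In case (i), with $R$ a field, the Bruhat decomposition expresses each $g\in G(R)$ as a bounded product of unipotent elements, a Weyl group representative, and a torus element, each of which is itself a bounded product of elementary root elements via the standard identities $h_\alpha(t)=w_\alpha(t)w_\alpha(1)^{-1}$ and $w_\alpha(t)=x_\alpha(t)x_{-\alpha}(-t^{-1})x_\alpha(t)$. In case (ii), for simply connected $G$ over a local domain, bounded elementary generation is the classical result of Matsumoto and Stein. For simply connected $G$ over an arithmetic Dedekind domain $\mathcal{O}_S$ with $\mathrm{rank}(\Phi)\geq 2$, bounded generation by elementary root subgroups is the deep theorem of Carter--Keller for $SL_n$, extended by Tavgen, Erovenko--Rapinchuk, Morris, and others to general Chevalley types.

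\emph{Main obstacle.} The substantive input is the bounded-generation statement in the arithmetic case; all other ingredients are either formal (the Bruhat decomposition) or well established (Matsumoto--Stein for local rings, Mal'cev--Zilber for finite fields). Once these are assembled, applying the non-adjoint form of Theorem \ref{thm:bi-interpretation} yields Corollary \ref{corcor} immediately.
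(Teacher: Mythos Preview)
Your approach is essentially the paper's: verify finite elementary width in each case and invoke the non-adjoint version of Theorem~\ref{thm:bi-interpretation}. The paper's references differ slightly from yours---for the local case it cites Abe \cite{A}, Prop.~1.6 (giving $G(R)=E(R)$) together with \cite{HSVZ}, Cor.~1 (for bounded width), rather than Matsumoto--Stein, and for the arithmetic case it cites Tavgen \cite{T} directly---but the structure of the argument is identical. Your explicit treatment of the units condition (and the residual $\mathbb{F}_2$ case) is a point the paper passes over in silence.
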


For related results (in some ways less general, in some ways more) see
\cite{MS}, \cite{B} and \cite{AM}.

These results have consequences related to `first-order rigidity'. A group (or
ring) $X$ is \emph{first-order rigid }(or \emph{relatively axiomatizable}%
)\emph{ in a class }$\mathcal{C}$ if any member of $\mathcal{C}$ elementarily
equivalent to $X$ is isomorphic to $X$. For example, Avni, Lubotzky and Meiri
\cite{ALM} prove that all higher-rank non-uniform arithmetic groups are
first-order rigid in the class of f.g. groups.

A stronger condition is relative\emph{ finite axiomatizability}, or \emph{FA}:
$X$ is FA in $\mathcal{C}$ if there is a first-order sentence such that $X$ is
the unique member of $\mathcal{C}$ (up to isomorphism) that satisfies this
sentence. When $\mathcal{C}$ is the class of finitely generated groups, resp.
rings, the latter property is often called \emph{QFA}, or quasi-finitely
axiomatizable; see \cite{NSG}, \cite{AKNZ}, and for recent variations on this
theme \cite{NS2}. (This should not be confused with the notion of quasi finite
axiomatizability used in model theory, see e.g. \cite{P}, Chapter 3, \cite{AZ}.)

Suppose that $G(R)$ is bi-interpretable with $R$. Then $G(R)$ is first-order
rigid, resp. FA in $\mathcal{C}$ if and only if $R$ has this property relative
to $\mathcal{C}^{\prime}$, provided the `reference classes' $\mathcal{C}$ and
$\mathcal{C}^{\prime}$ are suitably chosen. In particular, in \S \ref{ax} we establish

\begin{corollary}
\label{thm:fa} Assume that $G$ and $R$ satisfy the hypotheses of Theorem
\ref{thm:bi-interpretation}. If $R$ is first-order rigid, resp. FA, in (a) the
class of finitely generated rings, (b) the class of profinite rings, (c) the
class of locally compact (or t.d.l.c.) topological rings, then $G(R)$ has the
analogous property in (a) the class of finitely generated groups, (b) the
class of profinite groups, (c) the class of locally compact (or t.d.l.c.)
topological groups.
\end{corollary}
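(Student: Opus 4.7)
The strategy is to transport axiomatizing sentences, or in the rigidity case entire elementary theories, through the bi-interpretation between $R$ and $G(R)$ provided by Theorem \ref{thm:bi-interpretation}. Fix interpretations $\iota_{G}$ of $R$ inside $G(R)$ and $\iota_{R}$ of $G(R)$ inside $R$, together with the formulas defining the isomorphisms $\iota_{R}(\iota_{G}(G(R))) \cong G(R)$ in the language of groups and $\iota_{G}(\iota_{R}(R)) \cong R$ in the language of rings.

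Let $\mathcal{C}_{i}, \mathcal{C}'_{i}$ denote the corresponding pair of classes for $i \in \{a,b,c\}$. Suppose $H \in \mathcal{C}_{i}$ and the ring $S := \iota_{G}(H)$ interpreted in $H$ lies in $\mathcal{C}'_{i}$; then the remainder of the argument is purely formal. For first-order rigidity: if $H \equiv G(R)$, then $S \equiv R$, so rigidity of $R$ in $\mathcal{C}'_{i}$ gives $S \cong R$, and the second half of the bi-interpretation yields a definable-in-$H$ isomorphism $H \cong \iota_{R}(S) \cong \iota_{R}(R) \cong G(R)$. For FA: let $\sigma$ be a ring sentence characterizing $R$ in $\mathcal{C}'_{i}$, and let $\tau$ be the group sentence asserting that $\iota_{G}$ defines a ring satisfying $\sigma$ and that the bi-interpretation formulas produce an isomorphism $\iota_{R}(\iota_{G}(\cdot)) \cong \mathrm{id}$. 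Then $G(R) \models \tau$, and any $H \in \mathcal{C}_{i}$ with $H \models \tau$ yields $S \cong R$ and hence $H \cong G(R)$. One also has to know that $G(R)$ itself lies in $\mathcal{C}_{i}$: it is finitely generated as a group when $R$ is finitely generated as a ring (since the elementary root elements generate $G(R)$ and are finite in number over finitely many ring generators), and it is a closed subset of $\mathrm{GL}_{n}(R)$ in the subspace topology, hence profinite or locally compact whenever $R$ is.

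The only non-formal ingredient is thus the closure property: whenever $H \in \mathcal{C}_{i}$, one must verify $\iota_{G}(H) \in \mathcal{C}'_{i}$. For (a), a ring interpreted on a quotient of a definable subset of a Cartesian power of a finitely generated group is itself finitely generated, because the generators of $H$ supply finitely many generators for $\iota_{G}(H)$. For (b) and (c) one needs the interpreting formulas of $\iota_{G}$ to cut out closed subsets of $H^{n}$, modulo closed equivalence relations, so that $\iota_{G}(H)$ inherits a profinite (respectively, locally compact) ring topology.

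The principal obstacle is this last point: to deduce the corollary one must know that the bi-interpretation of Theorem \ref{thm:bi-interpretation} is a \emph{topological} bi-interpretation, not merely an abstract one. Since the construction there recovers $R$ from $G(R)$ via root subgroups realized as double centralizers of root elements, the defining formulas are built from closed conditions on matrix entries, so continuity should follow directly from inspection of those formulas; but this verification is the sole substantive step beyond the formal model-theoretic dictionary.
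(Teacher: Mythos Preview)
Your overall strategy matches the paper's (\S\ref{ax}): translate a ring axiomatization $\Sigma$ into a group axiomatization $\widetilde{\Sigma}=\Sigma^{\ast}\cup\{P_1',P_2'\}$, and for any $H$ in the target class satisfying $\widetilde{\Sigma}$ deduce $A_H\cong R$ and hence $H\cong G(A_H)\cong G(R)$. The substantive work, as you say, is the closure step $H\in\mathcal{C}_i\Rightarrow \iota_G(H)\in\mathcal{C}'_i$. But two of your arguments for this step are wrong as stated.

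First, ``$G(R)$ is finitely generated when $R$ is, since the elementary root elements over finitely many ring generators generate'' is false in general: the elements $x_\alpha(r)$ for $r$ in a ring generating set need not generate even the elementary subgroup, let alone $G(R)$. The paper does not claim this direction; Theorem~\ref{prth}(i) simply \emph{assumes} $G(R)$ is finitely generated, and the Remark after it notes that the converse requires extra hypotheses on $\Phi$ and $R$. Second, and more centrally, ``the generators of $H$ supply finitely many generators for $\iota_G(H)$'' is not a valid principle: definable subgroups of finitely generated groups are not in general finitely generated, and group generators have no automatic relation to ring generators of an interpreted ring. The paper's argument (Propositions~\ref{RGprops}(i), \ref{gvr2}(i)) is specific to this interpretation: $P_2'$ gives $H\cong G(S)$ as a matrix group, the matrix entries of a finite generating set generate a subring $S'$ with $G(S)=G(S')$, and then $U_\alpha(S)=U_\alpha(k)\cap G(S')=U_\alpha(S')$ forces $S=S'$. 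Similarly, for (b) and (c) your suggestion of ``closed conditions on matrix entries'' does not apply, since $H$ is an abstract topological group; the paper instead uses that the interpreted ring is (a quotient of) a double centralizer, hence closed in any Hausdorff group topology, and that multiplication is continuous because commutation is and the projection $\pi_1$ of Lemma~\ref{comp} is continuous on a topological direct product (Proposition~\ref{gvr2}(ii)).
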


In most cases the converse of this corollary is also valid, see \S \ref{ax}.

It is important to note that in cases (b) and (c), the first-order axioms can
\emph{a priori }only determine the group up to isomorphism \emph{as an
abstract group}, cf. \cite{NS2}, \S 1.2; in most of the cases under
consideration, this is sufficient to determine the group as a topological
group, see Proposition \ref{CSP}.

In \S \ref{sec:applications} we deduce

\begin{corollary}
\label{axiomcor}\emph{(i)} Let\emph{ }$R$ be an integral domain. If $G$ is
adjoint and the group $G(R)$ is finitely generated then $G(R)$ is FA among
f.g. groups, assuming that the units condition holds.

\emph{(ii) }Let $\mathfrak{o}_{S}$ be the ring of $S$-integers in an algebraic number field
$k,$ where $S$ is a finite set of primes of $k$. If $G$ 
is adjoint or simply connected then the $S$-arithmetic group 
$G(\mathfrak{o}_{S})$ is FA among f.g. groups.

\emph{(iii)} If $G$ is adjoint or simply connected and $R$ is one of the complete local
rings $\mathbb{F}_{q}[[t_{1},\ldots,t_{n}]]$,$~$
$\mathfrak{o}_{q}[[t_{1},\ldots,t_{n}]]$ 
($n\geq0$) then $G(R)$ is FA in the class of profinite groups.

\emph{(iv)} If $k$ is a local field then $G(k)$ is FA in the class of locally
compact groups.
\end{corollary}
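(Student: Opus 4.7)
The plan is to derive each part by combining the bi-interpretability provided by Theorem \ref{thm:bi-interpretation}, or its field version Corollary \ref{corcor}(i), with the transfer principle of Corollary \ref{thm:fa}. Once bi-interpretability of $G(R)$ with $R$ is in place, the FA property for $G(R)$ in a class $\mathcal{C}$ of groups follows from the FA property for $R$ in the corresponding class $\mathcal{C}'$ of rings, so the task reduces to verifying, for each ring appearing in (i)--(iv), that it is FA in the appropriate reference class of rings.

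For (i) the hypotheses of Theorem \ref{thm:bi-interpretation} hold by assumption, so $G(R)$ is bi-interpretable with $R$. A standard property of bi-interpretations then forces $R$ to be finitely generated as a ring (since $G(R)$ is f.g.\ as a group), and every f.g.\ integral domain is FA in the class of f.g.\ commutative rings; this is the ring-theoretic side of the QFA theory going back to Nies, cf.\ \cite{AKNZ}. Part (ii) is essentially the special case $R=\mathfrak{o}_S$: it is a f.g.\ integral domain of characteristic zero, so the units condition is automatic, Theorem \ref{thm:bi-interpretation} applies for adjoint $G$, and Corollary \ref{corcor}(ii) applies for simply connected $G$.

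For (iii), each $R\in\{\mathbb{F}_q[[t_1,\ldots,t_n]],\ \mathfrak{o}_q[[t_1,\ldots,t_n]]\}$ is a complete local integral domain, so Theorem \ref{thm:bi-interpretation} supplies bi-interpretability when $G$ is adjoint (the units condition holding except in the trivial case $R=\mathbb{F}_2$) and Corollary \ref{corcor}(ii) does so when $G$ is simply connected. The ring-side task is then to single out $R$ among profinite rings by a single first-order sentence, which can be done by axiomatizing its Krull dimension, residue characteristic, and residue field, and then invoking the classification of complete local rings with finite residue field. Part (iv) is analogous: Corollary \ref{corcor}(i) provides bi-interpretability for any field, and a local field is well known to be FA in the class of locally compact topological rings.

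The principal obstacle lies in the FA step for rings in the profinite and locally compact classes of (iii) and (iv): as emphasized in the remark following Corollary \ref{thm:fa}, in these topological settings a first-order sentence can \emph{a priori} only determine the ring up to abstract isomorphism, so one must both encode the relevant algebraic invariants in a single sentence and appeal to Proposition \ref{CSP} to upgrade the resulting abstract isomorphism into a topological one. With these ring-level facts in hand, Corollary \ref{thm:fa} immediately transfers them into the claimed FA properties of $G(R)$.
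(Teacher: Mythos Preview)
Your proposal is correct and follows the paper's own route: bi-interpretability via Theorem~\ref{thm:bi-interpretation}/Corollary~\ref{corcor}, then ring-side FA (the paper cites \cite{AKNZ} for f.g.\ rings, \cite{NS2} Theorem~4.4 for the profinite local rings of (iii), and supplies its own Proposition~\ref{local} for local fields rather than treating this as ``well known''), then transfer via Corollary~\ref{thm:fa}/Theorem~\ref{prth}. One small imprecision: Proposition~\ref{CSP} is invoked on the \emph{group} side---to upgrade the abstract isomorphism of $G(R)$ with a candidate topological group to a topological one---not on the ring side as your last paragraph suggests.
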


(Here $\mathfrak{o}_{q}=\mathbb{Z}_{p}[\zeta],$ where $q=p^{f}$ and $\zeta$ is
a primitive $(q-1)$th root of unity). For the fact that the $S$-arithmetic
groups in (ii) are indeed finitely generated see \cite{BS}.)

Our final result moves away from integral domains. The model theory of
ad\`{e}le rings and some of their subrings has attracted some recent interest
(\cite{DM}, \cite{D}, \cite{AMO}), and it seems worthwhile to extend the
results in that direction.

Let $\mathbb{A}$ denote the ad\`{e}le ring of a global field $K$, with
$\mathrm{char}(K)\neq2,3,5$. We consider subrings of $\mathbb{A}$ of the
following kind:%
\begin{equation}
A=\mathbb{A},~~~~~A=~\prod_{\mathfrak{p}\in\mathcal{P}}\mathfrak{o}%
_{\mathfrak{p}} \label{adelic}%
\end{equation}
where $\mathfrak{o}$ is the ring of integers of $K$ and $\mathcal{P}$ may be
any non-empty set of primes (or places) of $K$. For example, $A$ could be the
whole ad\`{e}le ring of $\mathbb{Q}$, or $\widehat{\mathbb{Z}}=\prod
_{p}\mathbb{Z}_{p}.$

\begin{theorem}
\label{adgps}Let $G$ be a simple Chevalley-Demazure group scheme of rank at
least $2,$ or else one of the groups $\mathrm{SL}_{2}$, $\mathrm{SL}%
_{2}/\left\langle -1\right\rangle ,$ $\mathrm{PSL}_{2}$. Let $A$ be as in
\emph{(\ref{adelic})}. Then $A$ is bi-interpretable with the group $G(A)$.
\end{theorem}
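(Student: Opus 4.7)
The plan is to lift Theorem \ref{thm:bi-interpretation} from each local component of $A$ to $A$ itself. Both types of ring in (\ref{adelic}) decompose as (possibly restricted) products over the places $\mathfrak{p}$: for $A = \prod_\mathfrak{p} \mathfrak{o}_\mathfrak{p}$ a direct product of local domains, and for $A = \mathbb{A}$ a restricted product of local fields $K_\mathfrak{p}$ with respect to the $\mathfrak{o}_\mathfrak{p}$. Correspondingly $G(A) = \prod_\mathfrak{p} G(\mathfrak{o}_\mathfrak{p})$, or $G(\mathbb{A}) = {\prod}'_\mathfrak{p} G(K_\mathfrak{p})$ restricted with respect to $G(\mathfrak{o}_\mathfrak{p})$. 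At every place the local ring is either a local integral domain or a field, already covered by Corollary \ref{corcor}; and crucially, the interpretation formulas furnished by Theorem \ref{thm:bi-interpretation} depend only on $G$, not on the particular place.

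The interpretation of $A$ in $G(A)$ then follows the root-subgroup strategy of the main theorem applied uniformly: identify a root subgroup $U_\alpha(A) \cong A^+$ as (essentially) the double centralizer in $G(A)$ of the root element $x_\alpha(1)$, and recover ring multiplication from the conjugation action of a maximal torus $T(A)$ via the Chevalley identity $h_\beta(t) x_\alpha(r) h_\beta(t)^{-1} = x_\alpha(t^{\langle \alpha,\beta^\vee\rangle} r)$. The point that $1 \in A$ is a unit in every local factor ensures that the centralizer, and hence the double centralizer, of $x_\alpha(1)$ decomposes across places exactly as the ambient group does, yielding $U_\alpha(A)$ on the nose in both the direct-product and the restricted-product settings. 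The converse direction — interpreting $G(A)$ in $A$ — is essentially free, since $G$ is defined by polynomial equations over $\mathbb{Z}$.

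The chief new obstacle is the admission of the rank-one groups $\mathrm{SL}_2$, $\mathrm{SL}_2/\langle-1\rangle$, and $\mathrm{PSL}_2$, which lie outside the hypothesis of Theorem \ref{thm:bi-interpretation}. At each local field $K_\mathfrak{p}$ one can invoke Mal'cev/Zilber-type bi-interpretability (cf.\ Corollary \ref{corcor}(i)), but at each non-archimedean local ring $\mathfrak{o}_\mathfrak{p}$ a separate rank-one adaptation of the double-centralizer method is required; this is delicate because several Chevalley commutator identities exploited in the rank-$\geq 2$ proof have no rank-one analogue, so one must work harder (perhaps via the transvection/unipotent subgroup pair) to pin down $U_\alpha$ from a single root element. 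The hypothesis $\mathrm{char}(K) \neq 2,3,5$ presumably enters here to ensure the units condition at every residue characteristic and to rule out low-characteristic degeneracies in the rank-one identities that underpin the double-centralizer step.
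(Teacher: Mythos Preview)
Your high-level picture for rank $\geq 2$ is right and matches the paper: the double-centralizer formula holds componentwise (each $\mathfrak{o}_{\mathfrak{p}}$ has infinitely many units, so Theorem~\ref{thm:double centralizer} applies uniformly), and taking $u_\alpha$ with $\mathfrak{p}$-component $x_\alpha(1)$ everywhere gives $U_\alpha(A)Z(A)$ as a definable subgroup of $G(A)$. But three concrete pieces are missing.

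\textbf{Multiplication.} Torus conjugation $x_\alpha(r)^{h_\beta(t)} = x_\alpha(t^{n}r)$ only lets you multiply by powers of \emph{units}; it does not define the product of two arbitrary ring elements. The paper's engine here is a number-theoretic lemma (Lemma~\ref{S-lemma}): every $a \in A$ can be written as $\xi^2 - \eta^2 + s$ with $\xi,\eta \in A^\ast$ and $s$ ranging over a fixed \emph{finite} set. This is what converts ``multiplication by squares of units plus finitely many parameters'' into genuine ring multiplication. The hypothesis $\mathrm{char}(K)\neq 2,3,5$ is used precisely in this lemma, not for the ``units condition'' of Theorem~\ref{thm:bi-interpretation} (which is automatic here).

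\textbf{Rank one.} You acknowledge this is the hard case but supply no mechanism; ``work harder via the transvection/unipotent pair'' is not a proof, and in fact the double-centralizer route is \emph{not} what the paper does for $\mathrm{SL}_2$. Instead one fixes a specific $\tau\in A^\ast$ (e.g.\ $\tau_\mathfrak{p}=2$ for $\mathfrak{p}\nmid 2$, $\tau_\mathfrak{p}=3$ for $\mathfrak{p}\mid 2$) so that $\mathrm{C}_\Gamma(h(\tau)) = h(A^\ast)$, then uses the $\xi^2-\eta^2+s$ lemma to exhibit $u(A)$ as a definable set and to define multiplication by an explicit first-order formula. Invoking local bi-interpretability place-by-place does not help: you need a single uniform formula valid in the product, not a family of local ones.

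\textbf{The definable isomorphism back.} You sketch the two interpretations but omit the coherence step: a definable isomorphism $\theta: G(A) \to G(A')$ (Step~3 of \S\ref{bisection}). Without it you have mutual interpretability, not bi-interpretability. For rank $\geq 2$ the paper obtains this from \emph{finite elementary width} of $G(A)$, proved by combining Steinberg's bounded-generation result over PIDs with the observation that each $\mathrm{SL}_2$-subgroup $K_\alpha$ is a bounded product of $U_{\pm\alpha}(A)$'s. For $\mathrm{SL}_2(A)$ itself, a separate and rather delicate argument is needed, involving a definable set $W$ of ``partial Weyl elements'' $(x_\mathfrak{p}\in\{1,w\})$ used to move any $g$ into the big Bruhat cell $VHU$.
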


When $\left\vert \mathcal{P}\right\vert =1,$ this is included in Theorem
\ref{thm:bi-interpretation} for groups of higher rank, and is established in
\cite{NS2}, \S 4 for groups of type $\mathrm{SL}_{2}$.

The main point of the paper is to show how results like Theorem
\ref{thm:bi-interpretation} may be deduced from the fact that \emph{root
subgroups are definable}. This in turn is a (relatively straightforward)
consequence of our main structural result.

The root subgroup of $G$ associated to a root $\alpha$ is denoted $U_{\alpha}%
$. It seems to be part of the folklore that for a field $k$, the subgroup
$U_{\alpha}(k)$ is equal to its own double centralizer in $G(k)$. We will need
a more general version of this; as we could not find a reference, and the
result for some rings is perhaps somewhat unexpected, we will present three
different approaches to the proof, each applicable to a slightly different
range of cases.

\begin{theorem}
\label{thm:double centralizer}Assume that $R$ satisfies the units condition.
Let $U$ be a root subgroup of $G$ and let $1\neq u\in U(R)$. Write $Z$ for the
centre of $G$.\emph{ }Then $\ $%
\begin{equation}
\mathrm{C}_{G(R)}\mathrm{C}_{G(R)}(u)=\mathrm{Z}(\mathrm{C}_{G(R)}%
(u))=U(R)Z(R) \label{dc1}%
\end{equation}
unless $G$ is of type $C_{n}$ (including $B_{2}=C_{2}$)$,$ $U$ belongs to a
short root $\alpha$ and $R^{\ast}=\{\pm1\}$, in which case%
\begin{equation}
\mathrm{Z}(\mathrm{C}_{G(R)}(u))\leq U(R)U_{1}(R)U_{2}(R)Z(R) \label{dc2}%
\end{equation}
where $U_{1}$ and $U_{2}$ are root subgroups belonging to long roots adjacent
to $\alpha$ in a $B_{2}$ subsystem.
\end{theorem}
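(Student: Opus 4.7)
The equality $\mathrm{C}_{G(R)}\mathrm{C}_{G(R)}(u) = \mathrm{Z}(\mathrm{C}_{G(R)}(u))$ is formal: any element centralising $\mathrm{C}_{G(R)}(u)$ in particular commutes with $u$, so lies in $\mathrm{C}_{G(R)}(u)$ and thus in its centre; the reverse inclusion is definitional. So the substance of the theorem is to prove $\mathrm{Z}(\mathrm{C}_{G(R)}(u)) = U(R)Z(R)$, modulo the $C_n$ relaxation. Write $C=\mathrm{C}_{G(R)}(u)$ for brevity.

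For the inclusion $U(R)Z(R)\subseteq \mathrm{Z}(C)$: $Z(R)\subseteq \mathrm{Z}(G(R))$ and $U(R)$ is abelian containing $u$, so $U(R)\subseteq C$. The nontrivial content is $[U(R),C]=1$, i.e.\ that commutation with the single root element $u$ already forces commutation with the whole root subgroup $U(R)$. Over an algebraically closed field this is the scheme-theoretic identity $\mathrm{C}_G(u)=\mathrm{C}_G(U_\alpha)$ for non-trivial root elements; to transport it to $R$-points, I would use that $U_\alpha(R)$ is built additively from conjugates of $u$ by torus elements $h_\alpha(t)$, $t\in R^*$, whose action on $u$ rescales the root-subgroup parameter, the units condition ensuring that the available torus action is rich enough.

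For $\mathrm{Z}(C)\subseteq U(R)Z(R)$, fix $c\in \mathrm{Z}(C)$ and proceed in two stages. Stage 1: consider the kernel subtorus $T_\alpha=\ker(\alpha)\subseteq T$, so that $T_\alpha(R)\subseteq C$. Invariance of $c$ under $T_\alpha(R)$-conjugation, together with the $T_\alpha$-weight decomposition of $G$, forces $c$ into the rank-one Levi $\langle T,U_{\pm\alpha}\rangle(R)$ up to the central $Z(R)$; the units condition is used here to separate distinct nonzero $T_\alpha$-weights of root subgroups $U_\beta$ ($\beta\notin\mathbb{Z}\alpha$) by $R^*$-valued characters. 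Stage 2: apply the centralising condition to the root subgroups $U_\gamma(R)$ with $\Phi\cap(\alpha+\mathbb{Z}_{>0}\gamma)=\emptyset$, which all lie in $C$; the Chevalley commutator formula applied in the rank-two subsystems containing $\alpha$ and $\gamma$ kills the toral and $U_{-\alpha}$ parts of $c$ modulo $Z(R)$, leaving $c\in U(R)Z(R)$.

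The main obstacle will be the $C_n$ exceptional case. If $\alpha$ is short and $\beta$ is a long root adjacent to $\alpha$ in a $B_2$-subsystem, then $\alpha+2\beta\in\Phi$, and the Chevalley formula $[x_\alpha(s),x_\beta(t)]=x_{\alpha+\beta}(\pm st)\,x_{\alpha+2\beta}(\pm st^2)$ is quadratic in $t$. When $R^*=\{\pm1\}$, every $t\in R^*$ satisfies $t^2=1$, so the Stage~1 argument can no longer distinguish the $T_\alpha$-weights of $\alpha$ and $\alpha+2\beta$ by $R^*$-valued characters; contributions from the adjacent long-root subgroups $U_{\pm\beta}$ (the $U_1,U_2$ of the statement) cannot be excluded from $\mathrm{Z}(C)$, producing exactly the loosened inclusion (\ref{dc2}). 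Making Stage~1 give the tight conclusion in all remaining types, and confirming that the $C_n$ slack is really only of the stated form, is the technical crux of the argument.
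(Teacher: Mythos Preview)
Your overall architecture is right, but both directions have real gaps. For $U(R)\subseteq\mathrm{Z}(C)$: your ``transport'' via torus conjugates fails, since $x_\alpha(r)^{h_\alpha(t)}=x_\alpha(t^{-2}r)$, so over $R=\mathbb{Z}$ (where the units condition holds) you recover only $u$ itself, not all of $U(R)$. The identity $\mathrm{C}_G(u)=\mathrm{C}_G(U_\alpha)$ over $\bar k$ would give the inclusion directly with no transport needed, but in positive characteristic this is not the triviality you suggest; the paper obtains it by invoking the double-centralizer dimension results of \cite{LT}, \cite{S1}, \cite{S2} for exceptional types (showing $\dim\mathrm{C}_G(\mathrm{C}_G(u))=1$) and explicit matrix checks for classical types.

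For the reverse inclusion, Stage~1 is too optimistic: when $R^\ast=\{\pm1\}$, every character of $T_\alpha$ takes values $\pm1$ on $R$-points, so $T_\alpha(R)$ cannot distinguish root subgroups whose $T_\alpha$-weights agree up to sign---and this happens well beyond the $C_n$ case. The paper's substitute is to construct, for each $\beta\neq\pm\alpha$, a specific \emph{torus witness} $s_\beta\in\mathrm{C}_{T(R)}(U_\alpha)$ acting effectively on $U_\beta$, and then to exploit the \emph{uniqueness} of the Bruhat normal form $g=u_1\cdots u_m\cdot tn_w\cdot v_1\cdots v_m$ to kill each $u_j,v_j$ with $j\neq1$ (Proposition~\ref{prop:double centralizer}); a separate argument then forces $tn_w\in Z$, a step your rank-one-Levi reduction does not address. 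When $|R^\ast|<4$ even torus witnesses are unavailable for some pairs, and the paper falls back on entirely different methods: building geometry for $G_2$ (\S\ref{building}) and explicit matrix witness sets of root elements for classical types (\S\ref{class}). Finally, your $C_n$ analysis has the roots reversed: with $\alpha$ short and $\beta$ long in $B_2$ it is $2\alpha+\beta\in\Phi$, not $\alpha+2\beta$, and $[x_\alpha(s),x_\beta(t)]$ is quadratic in $s$; the actual obstruction (Proposition~\ref{exactdc}) is a specific diagonal one-parameter subgroup $\phi(R)\subseteq U_1U_2$ that genuinely lies in $\mathrm{Z}(C)$ when $R^\ast=\{\pm1\}$.
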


In the exceptional case, $\mathrm{Z}(\mathrm{C}_{G(R)}(u))$ actually turns out
to be two-dimensional: the precise description is given in \S \ref{class}.

If one assumes that $R$ has at least four units, the theorem can be proved
very quickly, and we do this in \S \ref{dcanddefrt} below. Remaining cases are
dealt with in \S \S \ref{torus2}, \ref{building} and \ref{class}; these can be
skipped by the reader unconcerned with `difficult' rings such as $\mathbb{Z}$.

As for definability, we shall deduce

\begin{corollary}
\label{defibl}(Assuming the units condition). For each root $\alpha$ the root
subgroup $U_{\alpha}(R)$ is definable, unless possibly $G=\mathrm{Sp}_{4}(R)$,
$\mathrm{char}(R)=0$ and $R/2R$ is infinite; in any case $U_{\alpha}(R)Z(R)$
is definable.
\end{corollary}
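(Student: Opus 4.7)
The plan is to deduce Corollary \ref{defibl} from Theorem \ref{thm:double centralizer} in two stages: first exhibit $U_\alpha(R)Z(R)$ as (essentially) a double centralizer, and then use Chevalley commutator relations to peel off the $Z(R)$ factor.

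\emph{Stage 1: defining $U_\alpha(R)Z(R)$.} Fix a parameter $u\in U_\alpha(R)\setminus\{1\}$; such a $u$ exists because $R$ is a nonzero integral domain. In the non-exceptional case of Theorem \ref{thm:double centralizer},
\[
U_\alpha(R)Z(R)=\mathrm{C}_{G(R)}\mathrm{C}_{G(R)}(u)=\mathrm{Z}(\mathrm{C}_{G(R)}(u)),
\]
which is first-order definable with $u$ as a parameter, since centralizers are. In the exceptional $C_n$ situation, $\mathrm{Z}(\mathrm{C}_{G(R)}(u))$ sits strictly between $U_\alpha(R)Z(R)$ and $U_\alpha(R)U_1(R)U_2(R)Z(R)$, and is ``two-dimensional'' by \S\ref{class}. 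Using that explicit description I would carve out $U_\alpha(R)Z(R)$ by imposing extra centralizer or commutator conditions drawn from further $B_2$ subsystems through $\alpha$; such auxiliary subsystems exist whenever $G\neq\mathrm{Sp}_4$, which handles $C_n$ for $n\geq 3$. For $G=\mathrm{Sp}_4$ with $\mathrm{char}(R)=0$ and $R/2R$ infinite, no such algebraic condition separates the extra dimensions from $U_\alpha$, whence the caveat in the statement; in all remaining sub-cases of $\mathrm{Sp}_4$ either Theorem \ref{thm:double centralizer} applies directly (when $R^\ast\neq\{\pm 1\}$) or the finiteness of $R/2R$ supplies a workable definability condition.

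\emph{Stage 2: passing to $U_\alpha(R)$.} Since $\Phi$ has rank at least $2$, for each root $\alpha$ I choose $\beta,\gamma\in\Phi$ with $\alpha=\beta+\gamma$ such that $i\beta+j\gamma$ is a root with $i,j\geq 1$ only when $(i,j)=(1,1)$. This is automatic in simply-laced types and verifiable root-by-root in $B_n$, $C_n$ and $F_4$. The Chevalley commutator formula then yields $[x_\beta(r),x_\gamma(s)]=x_\alpha(\pm rs)$. Because $R$ is a domain we have $\{rs:r,s\in R\}=R$, and because $Z(R)$ is central,
\[
[U_\beta(R)Z(R),\,U_\gamma(R)Z(R)]=[U_\beta(R),U_\gamma(R)]=U_\alpha(R).
\]
Thus $U_\alpha(R)$ is the commutator subgroup of two sets already defined in Stage 1, hence first-order definable.

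The principal obstacle is Stage 1 in the exceptional $C_n$ regime; the $\mathrm{Sp}_4$ caveat in the corollary records exactly the case where the surplus in $\mathrm{Z}(\mathrm{C}_{G(R)}(u))$ cannot be pinned down algebraically. A secondary technical point concerns the root system $G_2$, where the naive choice of $(\beta,\gamma)$ in Stage 2 may produce commutators with residual higher-order terms $x_{i\beta+j\gamma}(r^i s^j)$; these can be neutralised by combining several commutators so that the unwanted contributions cancel, which is where the units condition genuinely enters the argument.
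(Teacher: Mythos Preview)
Your Stage 2 contains a genuine gap: the claim that every root $\alpha$ admits a decomposition $\alpha=\beta+\gamma$ with $[x_\beta(r),x_\gamma(s)]=x_\alpha(\pm rs)$ is false outside the simply-laced types. For a long root $\alpha$ in $C_n$ the only way to write $\alpha=\beta+\gamma$ is with $\beta,\gamma$ a pair of orthogonal short roots, and then the $B_2$ commutator formula reads $[x_\beta(r),x_\gamma(s)]=x_\alpha(\pm 2rs)$, so your commutator set is only $U_\alpha(2R)$. For a short root $\alpha$ in $B_n$ every decomposition has $\beta$ long and $\gamma$ short, and then $\beta+2\gamma$ is again a root, so the commutator acquires a second factor in $U_{\beta+2\gamma}$ that you cannot discard. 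Analogous failures occur in $F_4$. (Your $G_2$ remark is also misplaced: $G_2$ is always adjoint, so $Z=1$ and Stage 2 is vacuous there; the units condition plays no role at this point.)

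Relatedly, you locate the $\mathrm{Sp}_4$ caveat in Stage 1, but $U_\alpha(R)Z(R)$ is in fact definable for \emph{all} $C_n$, including $n=2$: inside the definable group $V=\mathrm{Z}(\mathrm{C}_{G(R)}(u))\leq U_{-\beta}U_\alpha U_{2\alpha+\beta}Z$, membership in $U_\alpha(R)Z(R)$ is detected by the two conditions $[g,x_{\alpha+\beta}(1)]\in U_{2\alpha+\beta}(R)Z(R)$ and $[g,x_{-\alpha-\beta}(1)]\in U_{-\beta}(R)Z(R)$, whose right-hand sides involve \emph{long} roots and are therefore already definable by the non-exceptional clause of Theorem \ref{thm:double centralizer}. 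No appeal to ``further $B_2$ subsystems'' (hence no restriction to $n\geq 3$) is needed. The caveat actually belongs to Stage 2: for $\mathrm{Sp}_4$ one has $Z^2=1$, so $(U_\alpha(R)Z(R))^2=U_\alpha(2R)$ is definable, and $U_\alpha(R)$ follows whenever $R/2R$ is finite or $\mathrm{char}(R)\neq 0$, but not otherwise. For $\Phi\notin\{G_2,B_2\}$ the paper first runs your $A_2$ commutator idea (its Case 1), and then treats the roots it misses---exactly the $C_n$-long and $B_n$-short roots above---by a bootstrap inside a $B_3$ or $C_3$ subsystem, using the already-defined root groups of the \emph{other} length to absorb the surplus commutator term.
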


\emph{Definable} here means `definable with parameters': a subset $H$ in a
group $\Gamma$ is \emph{definable} if there are a first-order formula
$\varphi$ and elements $g_{1},\ldots,g_{m}\in\Gamma$ such that $H=\left\{
h\in\Gamma~\mid\varphi(h,g_{1},\ldots,g_{m}\}\text{ holds}~\right\}  $.

This is good enough for the proof of Theorem \ref{thm:bi-interpretation},
which appears in \S \ref{bisection}.

\bigskip

\textbf{Remark. \ } Essentially the same proof establishes
Corollary~\ref{defibl} whenever $G$ is a $k$-isotropic algebraic group with
the maximal $k$-torus defined over $R$, provided $R$ has at least four units.
Whether the other results can be extended in this direction remains to be
seen, cf. \cite{KRT}, \cite{ALM}, \cite{AM}.

\bigskip

\textbf{Regarding Chevalley groups of rank 1}. It is easy to verify both
Theorem \ref{thm:double centralizer} and Corollary \ref{defibl} for groups $G$
of type $A_{1}$.

It is shown in \cite{NS2}, \S 4 that $\mathrm{SL}_{2}(R)$ is bi-interpretable
with $R$ if $R$ is a profinite local domain; thus Cor. \ref{axiomcor}(iii)
holds also for $G=\mathrm{SL}_{2}$.

We do not know if the other cases hold for $\mathrm{SL}_{2}$. It seems
extremely unlikely that $\mathrm{SL}_{2}(\mathbb{Z})$ can be FA or even
first-order rigid, as it is virtually free; results of Sela \cite{Se1},
\cite{Se2} concerning free products imply that $\mathrm{PSL}_{2}(\mathbb{Z})$
is not first-order rigid, and so not bi-interpretable with $\mathbb{Z}$.

In the proofs we have frequent recourse to the Chevalley commutator formula,
summarized for convenience in the Appendix.

\section{Double centralizers and definability of root groups\label{dcanddefrt}%
}

Following \cite{A} we denote by $T$ the distinguished maximal torus of $G$
determined by $\Phi$. Let $N$ denote the normalizer of $T$ in $G$, so that the
Weyl is group $W=N/T$. We will sometimes use the fact that $W$ permutes the
root subgroups, and acts transitively on the set of short roots and on the set
of long roots. Each $w\in W$ has a coset representative $n_{w}\in N(R)$ (in
fact, in the subgroup generated by root elements of the form $x_{a}(\pm1)$ ) (
\cite{C}, \S 7.2 and Lemma 6.4.4). Thus all long (resp. short) root subgroups
are conjugate in $G(R)$.

The field of fractions of $R$ will be denoted $k$, and its algebraic closure
$\overline{k}$. Sometimes we identify $G$ with $G(\overline{k})$. We write
$\pi:G\rightarrow G/Z$ for the quotient map.

\bigskip

We begin by clarifying the relation between the $R$-points of the algebraic
group $U_{\alpha}$ and the $1$-parameter group $x_{\alpha}(R)$; this is the
link between Corollary \ref{defibl} and the main theorems.

\begin{lemma}
\label{step1}Let $U=U_{\alpha}$ be a root subgroup. Then%
\begin{align}
U\cap G(R)  &  =U(R)=x_{\alpha}(R),\label{2.1i}\\
UZ\cap G(R)  &  =U(R)Z(R). \label{2.1ii}%
\end{align}

\end{lemma}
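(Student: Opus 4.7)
I would base both equations on the following descent principle: since $R$ is an integral domain, the inclusion $R\hookrightarrow\overline{k}$ is injective, so for any closed subgroup scheme $H$ of $G$ defined over $\mathbb{Z}$ one has $H\cap G(R)=H(R)$. Indeed, if $g:\mathrm{Spec}(R)\to G$ corresponds to the ring homomorphism $g^{\ast}:\mathbb{Z}[G]\to R$, and the associated $\overline{k}$-point lies in $H(\overline{k})$, then the composite $\mathbb{Z}[G]\to R\to\overline{k}$ kills the ideal $I$ defining $H$ in $G$; injectivity of $R\to\overline{k}$ then forces $g^{\ast}(I)=0$, so $g$ factors through $H$.

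Given this principle, (\ref{2.1i}) is immediate: the Chevalley parametrization $x_{\alpha}:\mathbb{G}_{a}\xrightarrow{\sim}U_{\alpha}$ is an isomorphism of $\mathbb{Z}$-group schemes, so on $R$-points $x_{\alpha}(R)=U(R)$, and the descent principle applied to $U\hookrightarrow G$ gives $U\cap G(R)=U(R)$. For (\ref{2.1ii}) the key structural step is to show that the multiplication map $U\times Z\to G$ is a closed immersion with image $UZ$, inducing a scheme-theoretic isomorphism $U\times Z\xrightarrow{\sim}UZ$. This rests on two ingredients: that $Z$ is central in $G$ (so the image is a closed subgroup scheme and the map is a group homomorphism), and that $U\cap Z=1$ as subgroup schemes of $G$. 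For the latter I would invoke the standard facts that $Z\subseteq T$ (true for any split reductive group scheme over $\mathbb{Z}$, even when $Z$ is non-reduced, as for $\mathrm{SL}_{p}$ in characteristic $p$) and $T\cap U=1$, the second being a direct consequence of the ``big cell'' decomposition $U^{-}\times T\times U\xrightarrow{\sim} U^{-}TU\subseteq G$ as schemes.

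Once the scheme-theoretic isomorphism $U\times Z\cong UZ$ is in hand, taking $R$-points gives $(UZ)(R)=U(R)\times Z(R)$, which as a subset of $G(R)$ is exactly $U(R)Z(R)$. A second application of the descent principle, now to the closed subscheme $UZ\hookrightarrow G$, then yields $UZ\cap G(R)=(UZ)(R)=U(R)Z(R)$, completing (\ref{2.1ii}). The only genuine obstacle is verifying $U\cap Z=1$ scheme-theoretically; everything else is formal. I expect the author's proof may shortcut this by working Galois-theoretically inside $G(\overline{k})$: given $g\in UZ\cap G(R)$ write $g=x_{\alpha}(t)z$ with $t\in\overline{k}$, $z\in Z(\overline{k})$, and note that uniqueness of this decomposition forces $\mathrm{Gal}(\overline{k}/k)$-invariance of each factor, hence $t\in k$ and $z\in Z(k)$; combining with $g\in G(R)$ then reduces to (\ref{2.1i}) for $U$ and the analogous fact for $Z$.
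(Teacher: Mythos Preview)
Your scheme-theoretic approach is correct, and for (\ref{2.1i}) it matches the paper's: both rest on $x_{\alpha}:\mathbb{G}_{a}\to G$ being a closed immersion over $\mathbb{Z}$. For (\ref{2.1ii}), however, the paper takes a different and lighter route. Rather than establishing that $U\times Z\to UZ$ is a scheme-theoretic isomorphism over $\mathbb{Z}$ (which, as you note, needs $U\cap Z=1$ scheme-theoretically and a monomorphism-is-closed-immersion argument), the paper simply passes to the adjoint quotient: if $g=x_{\alpha}(\xi)z\in G(R)$ with $\xi\in\overline{k}$ and $z\in Z$, then $g\pi=x_{\alpha}(\xi)\pi\in(G/Z)(R)$ because $\pi:G\to G/Z$ is a morphism of $\mathbb{Z}$-schemes, and now (\ref{2.1i}) applied to the group scheme $G/Z$ gives $\xi\in R$; hence $x_{\alpha}(\xi)\in U(R)$ and $z=x_{\alpha}(\xi)^{-1}g\in Z\cap G(R)=Z(R)$. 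This avoids any analysis of $UZ$ as a subscheme and reduces everything to the single fact already used for (\ref{2.1i}). Your approach buys a self-contained statement about $(UZ)(R)$ without invoking the quotient $G/Z$, at the cost of heavier scheme-theoretic input; the paper's approach is quicker but relies on the quotient morphism.

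One small caution about your closing speculation: the Galois-invariance shortcut gets $t\in k$ and $z\in Z(k)$, but from there the reduction to (\ref{2.1i}) is not immediate, since $x_{\alpha}(t)=gz^{-1}$ need not lie in $G(R)$ when $z$ is only known to be in $Z(k)$. The paper's projection to $G/Z$ sidesteps exactly this difficulty.
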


\begin{proof}
(\ref{2.1i}): If $R$ is a PID, or more generally an intersection of PIDs such
as a Dedekind ring, this follows from \cite{St}, Lemma 49(b). In the general
case, it is a consequence of the fact that the morphism $x_{\alpha}$ from the
additive group scheme to $G$ is a closed immersion (\cite{Co}, Thm. 4.1.4;
\cite{SGA} exp. XX, remark following Corollaire 5.9).

(\ref{2.1ii}): Say $g=x_{\alpha}(\xi)z\in G(R)$ where $\xi\in\overline{k}$ and
$z\in Z$. Then
\[
x_{\alpha}(\xi)\pi=g\pi\in G(R)\pi\subseteq(G/Z)(R),
\]
whence $\xi\in R$ by (\ref{2.1i}) applied to the group scheme $G/Z$. Thus
$x_{\alpha}(\xi)\in U(R)$ and so $z\in Z(R)$.
\end{proof}

\medskip

The main step in the proof of Theorem \ref{thm:double centralizer} is

\begin{lemma}
\label{mainlemma}Assume that if $G$ is of type $E_{n}$ or $F_{4}$ then
$R^{\ast}\neq1$, and if $G$ is of type $C_{n}$ then $R^{\ast}\neq\{\pm1\}$.
Then there exists a finite set $Y\subseteq\mathrm{C}_{G(R)}(U)$ such that
$\mathrm{C}_{G}(Y)\subseteq UZ$.
\end{lemma}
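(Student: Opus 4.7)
The plan is to build $Y$ from three kinds of elements: a torus element $t \in T(R) \cap \ker\alpha$, a nontrivial root element $u \in U_\alpha(R)$, and finitely many nontrivial root elements $v_\beta \in U_\beta(R)$, one for each root $\beta$ in a suitable finite set of roots whose root subgroups centralize $U = U_\alpha$. All such elements clearly lie in $\mathrm{C}_{G(R)}(U)$.

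The role of $t$ is to force $\mathrm{C}_G(t)$ into a Levi subgroup $L$ containing the rank-one Levi $L_\alpha = \langle T, U_{\pm\alpha}\rangle$. Ideally $L = L_\alpha$; this occurs when $t$ is regular modulo $\ker\alpha$, meaning $\beta(t) \neq 1$ for every root $\beta \neq \pm\alpha$. The units conditions are exactly what is needed to produce such a $t$ in $T(R)$: for types $E_n$, $F_4$, any nontrivial $t \in T(R) \cap \ker\alpha$ requires a non-identity unit, and for short roots of $C_n$, an element valued in $\{\pm 1\}$ produces too large a centralizer. When $R$ has very few units but the root system is $A$, $B$, $D$ or $G_2$, a single $t$ only gives a strictly larger Levi, and the residual slack will be absorbed in Step 3 below.

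The element $u$ restricts us to $\mathrm{C}_G(t,u) \subseteq \mathrm{C}_L(u)$. Inside $L_\alpha$ the centralizer of a nontrivial unipotent root element is $U_\alpha \cdot Z(L_\alpha) = U_\alpha \cdot \ker\alpha$, so in the ideal case we are already reduced to $U_\alpha \cdot (\text{a central torus})$. For each root $\beta \neq -\alpha$ with $\beta + i\alpha \notin \Phi$ for every $i \ge 1$, so that $U_\beta$ centralizes $U$, a nontrivial $v_\beta \in U_\beta(R)$ imposes $\beta(h) = 1$ on any torus element $h \in \ker\alpha$ commuting with it. Choosing finitely many such $\beta$'s whose characters, together with $\alpha$, span a finite-index subgroup of the character lattice $X^*(T)$ cuts $\ker\alpha$ down to $Z(G)$. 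In the low-units case, the same $v_\beta$'s, combined with the Chevalley commutator formula, also force the extra factors of $L$ beyond $L_\alpha$ to collapse. Assembling the three pieces yields $\mathrm{C}_G(Y) \subseteq U_\alpha \cdot Z = UZ$.

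The main obstacle is making Step 1 actually work under the stated units hypotheses, and, in the low-units regime, ensuring that the root elements chosen in Step 3 simultaneously trim the residual central torus \emph{and} dispose of the extra Levi factors. Both tasks require careful case-by-case analysis of the root-system combinatorics, and the choice of the set of $\beta$'s will depend on which type and which kind of root $\alpha$ (long or short) is under consideration.
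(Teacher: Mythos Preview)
Your strategy differs from the paper's in an essential way. You aim for a \emph{single} torus element $t\in T(R)\cap\ker\alpha$ whose centralizer is the rank-one Levi $L_\alpha$, and then argue inside $L_\alpha$. The paper instead constructs, for \emph{each} positive root $\beta\neq\alpha$, a separate torus witness $s_\beta\in T(R)$ centralizing $U_\alpha$ but moving $U_\beta$, and then exploits the uniqueness of the Bruhat decomposition: conjugating the Bruhat normal form of $g\in\mathrm{C}_G(Y)$ by each $s_\beta$ must fix it, which forces the $U_\beta$-components of $g$ to vanish one root at a time, leaving $g\in U_\alpha\cdot TN$; a short further argument then pushes the $TN$-part into $Z$. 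No single regular torus element is ever required.

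Your assertion that ``the units conditions are exactly what is needed to produce such a $t$'' fails already for $E_8$ with $R^\ast=\{\pm1\}$ (e.g.\ $R=\mathbb{Z}$ or $\mathbb{F}_3$), which is permitted by the hypothesis $R^\ast\neq1$. Here every nontrivial $t\in T(R)\cap\ker\alpha$ is an involution, and the centralizer of an involution in $E_8$ has root system $D_8$ or $E_7A_1$ --- never $A_1$ --- so Step~1 cannot yield $L=L_\alpha$ precisely in the exceptional cases where you claim it does. The paper handles $E_n$ and $F_4$ under $R^\ast\neq1$ by building per-root witnesses of the form $h_\gamma(r)$ or $h_\beta(r)h_\gamma(r)$ for suitable $\gamma$ and $r\in R^\ast\setminus\{1\}$; for $G_2$ and the classical types with no units hypothesis it abandons torus witnesses altogether in favour of explicit root-element sets, verified via the building or matrix calculations. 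Your Step~3 fallback of using root elements $v_\beta$ to absorb the excess Levi is close in spirit to the latter, but making it work inside a Levi of type $D_8$ or $E_7A_1$ would be a substantial new argument, which your outline does not provide.
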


To deduce the main case of the theorem, observe that $Z$ is contained in
\[
V:=\mathrm{C}_{G}(\mathrm{C}_{G}(u))\leq\mathrm{C}_{G}(\mathrm{C}%
_{G(R)}(u))\leq\mathrm{C}_{G}(Y)\leq UZ.
\]

Thus if $V$ has positive dimension we have equality throughout. This is
obvious if $\mathrm{char}(k)=0$; if $G$ is of classical type, it is easy to
see in a matrix representation that $V\geq U$ (cf. \S \ref{class}). In all
other cases, the results of \cite{LT}, \cite{S1} and \cite{S2} show that
$\mathrm{dim}(V)=1.$ Now (\ref{dc1}) follows by (\ref{2.1ii}). The proof of
Theorem 1.5 for groups of type $C_{n}$ is completed in \S \ref{class}.

\bigskip

\ The slickest proof of Lemma \ref{mainlemma} uses what we call `torus
witnesses'. Let $\alpha$ and $\beta$ be linearly independent roots. A
\emph{torus witness} for $(\alpha,\beta)$ is an element $s\in T(R)$ that
centralizes $U_{\alpha}$ and acts effectively on $U_{\beta}$:%
\[
s\in\mathrm{C}_{T(R)}(U_{\alpha}),~~\mathrm{C}_{U_{\beta}}(s)=1.
\]
Note that $s$ centralizes, respectively acts effectively, on a root group
$U_{\gamma}$ if and only if it does the same to $U_{\gamma}(R)$.

In most cases we can use `{elementary torus elements'} $h_{\gamma}(t)\in
T(R),$ defined by%
\[
h_{\gamma}(t)=x_{\gamma}(t)x_{-\gamma}(-t^{-1})x_{\gamma}(t)\cdot x_{\gamma
}(1)x_{-\gamma}(-1)x_{\gamma}(1)
\]
(\cite{St}, Lemma 20, \cite{C}, Lemma 6.4.4). Now $h_{\gamma}(t)$ acts on
$U_{\beta}$ by%
\[
x_{\beta}(r)^{h_{\gamma}(t)}=x_{\beta}(t^{-A_{\gamma\beta}}r)
\]
where
\[
A_{\gamma\beta}=\frac{2(\gamma,\beta)}{(\gamma,\gamma)}\in\{0,\pm1,\pm
2,\pm3\}
\]
(see \cite{C}, p. 194).

We first deal with the case where $R$ contains at least $4$ units:

\begin{proposition}
\label{prop:torus witness with units} Assume that $\left\vert R^{\ast
}\right\vert \geq4$. Then for each pair $(\alpha,\beta)$ of linearly
independent roots there is a torus witness $s_{\alpha,\beta}$.
\end{proposition}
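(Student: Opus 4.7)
The plan is to build the torus witness $s_{\alpha,\beta}$ as an explicit short product of the elementary torus elements $h_{\gamma}(t)$ introduced above. Since each cocharacter $h_{\gamma}\colon \mathbb{G}_{m}\to T$ is a homomorphism and $T$ is commutative, a product $\prod_{i} h_{\gamma_{i}}(u_{i})$ acts on any root subgroup $U_{\beta}$ by multiplication by the monomial $\prod_{i} u_{i}^{-A_{\gamma_{i}\beta}}$. The whole problem thus reduces to choosing indices and unit parameters so that the total exponent on $\alpha$ is $0$ (giving $s_{\alpha,\beta}\in\mathrm{C}_{T(R)}(U_{\alpha})$) while the total exponent on $\beta$ produces a scalar $c\ne 1$ in $R$; in the integral domain $R$ this latter condition is equivalent to $\mathrm{C}_{U_{\beta}}(s_{\alpha,\beta})=1$, by the remark preceding the proposition.

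If $A_{\beta\alpha}=0$ I simply take $s_{\alpha,\beta}=h_{\beta}(t)$, which already centralises $U_{\alpha}$ and acts on $U_{\beta}$ by $t^{-2}$, so any $t\in R^{\ast}$ with $t\ne\pm 1$ works, and such $t$ exists because $|R^{\ast}|\ge 4$ exceeds $|\{\pm 1\}|$. In the remaining case $a:=A_{\beta\alpha}\ne 0$, I take
\[
s_{\alpha,\beta}:=h_{\alpha}(t^{a})\,h_{\beta}(t^{-2})\in T(R),
\]
whose action on a root subgroup $U_{\gamma}$ is multiplication by $t^{2A_{\beta\gamma}-aA_{\alpha\gamma}}$. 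Substituting $\gamma=\alpha$ gives the exponent $2a-2a=0$, so centralisation of $U_{\alpha}$ is automatic; at $\gamma=\beta$ the exponent is $k:=4-A_{\alpha\beta}A_{\beta\alpha}$.

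For linearly independent roots the Cartan-integer product satisfies $A_{\alpha\beta}A_{\beta\alpha}=4\cos^{2}\theta\in\{0,1,2,3\}$, and since we are in the case $A_{\beta\alpha}\ne 0$ this product lies in $\{1,2,3\}$, so $k\in\{1,2,3\}$. The proof then concludes by producing $t\in R^{\ast}$ with $t^{k}\ne 1$: the $k$th roots of unity inside $R^{\ast}$ form a finite subgroup of the multiplicative group of the fraction field of $R$, hence a cyclic subgroup of order dividing $k\le 3$, and $|R^{\ast}|\ge 4$ strictly exceeds this order. I expect no serious obstacle in this argument: the only things to verify carefully are the exponent computation in Case 2 and the standard bound $A_{\alpha\beta}A_{\beta\alpha}\le 3$ for non-proportional roots. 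The hypothesis $|R^{\ast}|\ge 4$ is sharp precisely to defeat the worst case $k=3$, realised in rings such as $\mathbb{F}_{7}$ whose unit group contains three cube roots of unity.
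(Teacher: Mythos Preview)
Your proof is correct and in the same spirit as the paper's---both build the witness as a short product of elementary torus elements $h_{\gamma}(t)$---but your construction is more uniform. The paper splits the non-orthogonal case according to whether the rank-two subsystem spanned by $\alpha,\beta$ has type $A_{2}$, $B_{2}$, or $G_{2}$: in the $A_{2}$ case it takes $h_{\beta}(r)h_{\gamma}(r)$ for a third root $\gamma$ chosen so that the actions on $U_{\alpha}$ cancel, while in the $B_{2}$ and $G_{2}$ cases it exploits the existence of a root $\gamma$ orthogonal to $\alpha$ but not to $\beta$ and takes $h_{\gamma}(r)$ alone. Your formula $s_{\alpha,\beta}=h_{\alpha}(t^{A_{\beta\alpha}})h_{\beta}(t^{-2})$ handles all non-orthogonal pairs at once using only $\alpha$ and $\beta$ themselves, with the key observation that the resulting exponent $4-A_{\alpha\beta}A_{\beta\alpha}$ lies in $\{1,2,3\}$ by the standard bound on Cartan integers. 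This buys you a cleaner argument with no auxiliary root and no case analysis by Dynkin type; the paper's version, on the other hand, makes the geometry of the rank-two subsystem more visible and uses a single unit $r$ (with $r^{2}\neq1\neq r^{3}$) throughout rather than a $k$-dependent choice of $t$.
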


\begin{proof}
Let $r\in R^{\ast}$ be such that $r^{2}\neq1\neq r^{3}$. If $\beta$ is
orthogonal to $\alpha$, then we put $s_{\alpha,\beta}=h_{\beta}(r)$. Now
suppose $\alpha$ and $\beta$ are non-orthogonal. If $\alpha$ and $\beta$ span
a diagram of type $A_{2}$, then there is a root $\gamma\neq\pm\alpha,-\beta$
such that $(\alpha,\beta)\neq(\alpha,\gamma)$. In this case, the actions of
$h_{\beta}(r)$ and $h_{\gamma}(r)$ on $U_{\alpha}$ are inverse to each other
and so $s_{\alpha,\beta}=h_{\beta}(r)h_{\gamma}(r)$ is as required. If
$\alpha,\beta$ span a diagram of type $B_{2}$ or $G_{2}$, there is a root
$\gamma$ orthogonal to $\alpha$ and non-orthogonal to $\beta$ and we put
$s_{\alpha,\beta}=h_{\gamma}(r)$.
\end{proof}

\medskip

Other cases will be considered later.

\begin{proposition}
\label{prop:double centralizer} Let $\alpha$ be a positive root. Suppose that
for every positive root $\beta\neq\alpha$ there exists a torus witness
$s_{\beta}$ for $(\alpha,\beta)$. Set $Y=\{s_{\beta}~\mid~\beta\in\Phi_{+}\}$.
Then%
\[
\mathrm{C}_{G}(Y)\leq U_{\alpha}Z.
\]

\end{proposition}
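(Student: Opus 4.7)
My plan is to analyze $g\in\mathrm{C}_G(Y)$ through the Bruhat decomposition $G=\bigsqcup_{w\in W}U^{-}_w\,T\,n_w\,U$, writing $g=v\,t\,n_w\,u$ uniquely with $u\in U$, $v\in U^{-}_w$, $t\in T$. For each $s\in Y\subseteq T$, conjugating $g$ by $s$ rescales $v$ and $u$ by the root characters of $s$ and shifts the torus factor by a $w$-dependent element of $T$; the uniqueness of the Bruhat decomposition then forces
\[
svs^{-1}=v,\qquad sus^{-1}=u,\qquad w(s)=s.
\]

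From the first two, expanding $u=\prod_{\beta>0}x_\beta(r_\beta)$ gives $(\beta(s)-1)r_\beta=0$ in $R$ for each positive root $\beta$ and each $s\in Y$. Applying this with $s=s_\beta$ for each $\beta\in\Phi_{+}\setminus\{\alpha\}$, the torus-witness hypothesis $\beta(s_\beta)\ne 1$ together with $R$ being a domain forces $r_\beta=0$, so $u\in U_\alpha$. The identical argument on $v$ yields $v\in U_{-\alpha}$. In parallel, the condition $w(s)=s$ for all $s\in Y$ restricts $w$ to the common Weyl stabilizer $\bigcap_{\beta\ne\alpha}W_{s_\beta}$, which I would verify is at worst the rank-one subgroup $\langle s_\alpha\rangle$; the single remaining Bruhat cell $B\,n_{s_\alpha}\,B$ can then be handled by the same componentwise analysis.

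The remaining --- and most delicate --- step is the reduction from the rank-one subgroup $L_\alpha=\langle U_{\pm\alpha},T\rangle$ down to $U_\alpha Z$. Since $T\subseteq\mathrm{C}_G(Y)$ holds automatically, the residual torus factor $t$ and the negative-root contribution $v$ cannot be eliminated by the commutation conditions with $Y$ alone. I would close this gap by exploiting the explicit form of the witnesses $s_\beta$ as products of elementary torus elements $h_\gamma(r)$, together with the Chevalley commutator identities inside $L_\alpha$, which couple the $T$-factor to the $U_{\pm\alpha}$-factors via specific scalar relations. This final reduction is where I expect the main technical obstacle to lie, and its success hinges on a judicious choice of $r\in R^{\ast}$ that makes the collective action of $Y$ on $L_\alpha$ sufficiently rigid.
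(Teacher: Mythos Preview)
Your Bruhat-decomposition approach and the deduction that the unipotent components of $g$ collapse into $U_{\pm\alpha}$ match the paper's argument (the paper uses the variant $g=u_1\cdots u_m\cdot tn_w\cdot v_1\cdots v_m$ with all $u_i,v_i$ in \emph{positive} root groups and $v_i=1$ unless $w(\alpha_i)<0$, but this is inessential).

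Your diagnosis of the remaining obstacle is sharper than you perhaps realise. Since every $s_\beta$ lies in $T(R)\subseteq T$ and $T$ is abelian, the whole torus sits inside $\mathrm{C}_G(Y)$; as $T\cap U_\alpha Z=Z$ while $T\ne Z$, the inclusion $\mathrm{C}_G(Y)\le U_\alpha Z$ \emph{fails outright}. (Concretely: for $G=\mathrm{SL}_3$, $\alpha=\alpha_{12}$, one may take $Y=\{\mathrm{diag}(a,a,a^{-2})\}$ with $a^3\ne1$, and then $\mathrm{C}_G(Y)$ is the full $GL_2\times GL_1$ Levi.) Consequently your proposed fix---extracting further constraints from the explicit shape of the $h_\gamma(r)$ or from commutator identities inside $L_\alpha$---cannot succeed: no collection of torus elements centralising $U_\alpha$ will ever force a general $t\in T$ down into $Z$.

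The paper's proof closes the argument by tacitly invoking elements \emph{outside} the stated $Y$. To get $w(\alpha)=\alpha$ it asserts that $tn_w$ fixes some nontrivial $u\in U_\alpha$; to get $w=1$ and $t\in Z$ it asserts that $tn_w$ centralises $U_\delta$ whenever $U_\delta\le\mathrm{C}_G(U_\alpha)$ (equivalently $\alpha+\delta\notin\Phi$). Neither assertion follows from $g\in\mathrm{C}_G(Y)$ alone; both become available once $Y$ is enlarged to
\[
Y'=Y\cup\{\,x_\delta(1):\delta\in\Phi\setminus\{-\alpha\},\ \alpha+\delta\notin\Phi\,\}\subseteq\mathrm{C}_{G(R)}(U_\alpha),
\]
after which the argument you began goes through cleanly to give $\mathrm{C}_G(Y')\le U_\alpha Z$. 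This is precisely what Lemma~\ref{mainlemma} requires, so the paper's overall logic survives; but the resolution of the gap you spotted is not a subtler use of torus witnesses---it is the injection of root-element witnesses.
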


\begin{proof}
We recall the Bruhat decomposition (\cite{C}, Thm. 8.4.3, \cite{St}, p. 21).
Order the positive roots as $\alpha_{1},\ldots,$ $\alpha_{m}$ and write
$U_{i}=U_{\alpha_{i}}$. For $w\in W$ put%
\[
S(w)=\left\{  ~i~\mid~w(\alpha_{i})\in\Phi_{-}\right\}
\]
where $\Phi_{-}$ is the set of negative roots. Then each element of $G$ can be
written uniquely in the form%
\begin{equation}
g=u_{1}\ldots u_{m}\cdot tn_{w}\cdot v_{1}\ldots v_{m} \label{bruhaha}%
\end{equation}
where $w\in W$, $t\in T$,$~u_{i},v_{i}\in U_{i}$ and $v_{i}=1$ unless $i\in
S(w)$.

We may suppose that $\alpha=\alpha_{1}$. For each $i\geq2$ there is a torus
witness $s_{i}\in Y$ for $(\alpha_{1},\alpha_{i})$. Now let $g\in
\mathrm{C}_{G}(Y),$ and write $g$ in the form (\ref{bruhaha}). Then for each
$j\geq2$ we have%
\[
g=g^{s_{j}}=u_{1}^{s_{j}}\ldots u_{m}^{s_{j}}\cdot tn_{w}^{s_{j}}\cdot
v_{1}^{s_{j}}\ldots v_{m}^{s_{j}}.
\]
Now $s_{j}$ fixes $u_{1}$ and $v_{1}$, and moves each non-identity element of
$U_{j}$; it also normalizes $N$ and each $U_{i}.$ It follows by the uniqueness
of expression that $u_{j}=v_{j}=1$. This holds for each $j\geq2$, and we
conclude that%
\[
g=u_{1}tn_{w}v_{1}.
\]

As $tn_{w}=v_{1}^{-1}gu_{1}^{-1}$ fixes $u\in U_{\alpha},$ but conjugates
$U_{\alpha}$ to $U_{w(\alpha)}$, it follows that $w(\alpha)=\alpha$; in
particular, $1\notin S(w)$, and so $v_{1}=1$.

It remains only to prove that $tn_{w}\in Z=\mathrm{Z}(G)$. Let $\gamma$ be a
root. If $\alpha+\gamma\notin\Phi$ then $U_{\gamma}\leq\mathrm{C}%
_{G}(U_{\alpha})$. If $\alpha+\gamma$ and $\alpha-\gamma$ are both roots then
either $2\alpha+\gamma\notin\Phi$ or $2\alpha-\gamma\notin\Phi$, and then
$U_{\alpha\pm\gamma}\leq\mathrm{C}_{G}(U_{\alpha})$. It follows that $tn_{w}$
centralizes at least one of
\[
U_{\gamma},~U_{-\gamma},~U_{\alpha\pm\gamma}.
\]
As $w(\alpha)=\alpha$ this implies that $w(\gamma)=\gamma$, and as $\gamma$
was arbitrary it follows that $w=1$. Thus $tn_{w}=t\in T$, and acts on root
subgroups in the following manner:%
\[
x_{\gamma}(\xi)^{t}=x_{\gamma}(\chi(\gamma)\xi)
\]
for a certain character $\chi$. Now $\chi$ is trivial on $\alpha$ and on one
of $\gamma,~-\gamma,~\alpha+\gamma,~\alpha-\gamma$ so it is trivial on all of
them. Thus $t$ acts trivially on every root subgroup, and so $t\in
\mathrm{Z}(G)$ as required.
\end{proof}

\medskip

The `generic case' of Theorem \ref{thm:double centralizer}, where $\left\vert
R^{\ast}\right\vert \geq4$, is now completely established.

For the remainder of this section, we will take as given the conclusion of
this theorem (in its general form), and show that it implies Cor. \ref{defibl}.

Fix a root $\alpha$, set $U=U_{\alpha}$ and fix $u\in U$, $u\neq1$. We begin with

\begin{lemma}
\medskip\label{uzdef} $U(R)Z(R)$ \emph{is a definable subgroup of} $G(R)$.
\end{lemma}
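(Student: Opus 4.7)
The plan is to exhibit $U(R)Z(R)$ as the set of solutions of a first-order formula with parameter a fixed non-identity element $u \in U(R)$; by Lemma \ref{step1} we may take $u = x_\alpha(1)$. For any such $u$, the subgroup
\[
\mathrm{Z}(\mathrm{C}_{G(R)}(u)) \;=\; \mathrm{C}_{G(R)}(u) \cap \mathrm{C}_{G(R)}\mathrm{C}_{G(R)}(u)
\]
is trivially first-order definable in $G(R)$ with parameter $u$, via a formula of the form
\[
\varphi(g,u) \;\equiv\; [g,u]=1 \ \wedge \ \forall h\bigl([h,u]=1 \to [g,h]=1\bigr).
\]
In every non-exceptional case of Theorem \ref{thm:double centralizer}, equation (\ref{dc1}) gives $\mathrm{Z}(\mathrm{C}_{G(R)}(u)) = U(R)Z(R)$, so this formula already defines the target and there is nothing more to do.

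In the exceptional case -- $G$ of type $C_n$, $\alpha$ a short root, and $R^{\ast} = \{\pm 1\}$ -- only the containment (\ref{dc2}) is available, so $\mathrm{Z}(\mathrm{C}_{G(R)}(u))$ is strictly larger than $U(R)Z(R)$ and must be trimmed down. The plan is to intersect the definable set $\mathrm{Z}(\mathrm{C}_{G(R)}(u))$ with further first-order conditions. Two ingredients will be used: (a) the adjacent long-root subgroups $U_1, U_2$ that appear in (\ref{dc2}) are themselves \emph{not} excluded from (\ref{dc1}), because the theorem's exception applies only to short roots in $C_n$; so each $U_i(R)Z(R)$ is already definable by the argument of the non-exceptional case applied to $U_i$ in place of $U$. (b) The explicit ``two-dimensional'' description of $\mathrm{Z}(\mathrm{C}_{G(R)}(u))$ promised in \S\ref{class} tells us concretely how $U(R)Z(R)$ sits inside the upper bound $U(R)U_1(R)U_2(R)Z(R)$, which should allow us to cut it out by further definable conditions --- for instance by using the Chevalley commutator formula to distinguish elements of short-root type from elements of long-root type through their commutation behaviour with other root subgroups (the already-definable $U_i(R)Z(R)$ providing the needed reference sets).

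The main obstacle I anticipate is precisely this exceptional $C_n$ case: the naive double-centralizer formula over-shoots, and closing the gap requires both the definability of the long-root subgroups (which comes essentially for free from the generic case) and the detailed structural analysis deferred to \S\ref{class}.
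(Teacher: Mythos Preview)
Your proposal is correct and follows essentially the same approach as the paper: the non-exceptional case is exactly the double-centralizer argument you give, and in the $C_n$ short-root exception the paper trims the definable overgroup $V=\mathrm{Z}(\mathrm{C}_{G(R)}(u))$ using commutator conditions against the already-definable long-root groups, just as you outline. The execution is slightly more direct than you anticipate: the full ``two-dimensional'' description from \S\ref{class} is not needed, only the containment (\ref{dc2}); one simply computes $[g,x_{\alpha+\beta}(1)]$ and $[g,x_{-\alpha-\beta}(1)]$ for $g\in V$ and checks that $g\in U(R)Z(R)$ if and only if these land in $U_{2\alpha+\beta}(R)Z(R)$ and $U_{-\beta}(R)Z(R)$ respectively.
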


\begin{proof}
It is clear that the double centralizer of an element $u$ is definable, taking
$u$ as a parameter. So if $U$ satisfies (\ref{dc1}) we are done.

Otherwise, (\ref{dc2}) holds, $\Phi=C_{n}$ and $\alpha$ is a short root. Set
$V=\mathrm{Z}(\mathrm{C}_{G(R)}(u))$. Thus%
\[
U(R)Z(R)\leq V\leq U_{-\beta}(R)U(R)U_{2\alpha+\beta}(R)Z(R)
\]
where $\alpha,\beta$ make a pair of fundamental roots in a $B_{2}$-subsystem
of $\Phi$.

Let $g=x_{-\beta}(r)x_{\alpha}(s)x_{2\alpha+\beta}(t)z\in V$, where $z\in Z$.
The commutation relations give%
\begin{align*}
\lbrack g,x_{\alpha+\beta}(1)]  &  =x_{\alpha}(\pm r)x_{2\alpha+\beta}(\pm
r)x_{2\alpha+\beta}(\pm2s)\\
\lbrack g,x_{-\alpha-\beta}(1)]  &  =x_{-\beta}(\pm2s)x_{\alpha}(\pm
t)x_{-\beta}(\pm t)
\end{align*}

Now $g$ lies in $U(R)Z(R)$ if and only if $r=t=0$, which holds if and only if%
\begin{align*}
\lbrack g,x_{\alpha+\beta}(1)]  &  \in U_{2\alpha+\beta}(R)Z(R)\text{ \ and}\\
\lbrack g,x_{-\alpha-\beta}(1)]  &  \in U_{-\beta}(R)Z(R).
\end{align*}
As $2\alpha+\beta$ and $-\beta$ are long roots, each of the two groups on the
right is definable, as is $V$. Hence $U(R)Z(R)$ is definable in this case too.
\end{proof}

\medskip

Now we can complete the

\medskip

\textbf{Proof of Corollary \ref{defibl}. \ } If $G$ is adjoint then $Z=1$ and
$U(R)=U(R)Z(R)$ is definable, by Lemma \ref{uzdef}. This holds in particular
when $\Phi=G_{2}$ (\cite{St}, p. 23).

If $\Phi$ is not of type $A_{n},~D_{2m+1}$ or $E_{6}$ we have $Z^{2}=1$
(\emph{loc. cit.}), so in all these cases we have%
\[
U(2R)=((U(R)Z(R))^{2}%
\]
which is definable. If also $R/2R$ is finite, then $U(R)$ is the union of
finitely many cosets of $U(2R),$ and so definable with the help of a few
parameters. If $\Phi=B_{2}$ then either $G$ is adjoint or $G\cong%
\mathrm{Sp}_{4}$. If the characteristic of $R$ is odd then $2R=R$. If
$\mathrm{char}(R)=2$ and $Z^{2}=1$ then $Z=1$, and there is nothing to prove.
The case where $\mathrm{char}(R)=0$, $R/2R$ is infinite and $G\cong%
\mathrm{Sp}_{4}$ is the special case in the statement of the corollary. Thus
we may assume that $\Phi\notin\{G_{2},B_{2}\}.$

Now we separate cases. Note that if $U_{\beta}(R)$ is definable for some root
$\gamma$, then so is $U_{\gamma}(R)$ for every root $\gamma$ of the same
length as $\beta$, as these subgroups are all conjugate in $G(R)$. This will
be used repeatedly without special mention.

\medskip

\emph{Case 1}: There is a root $\beta$ such that $\alpha$ and $\beta$ make a
pair of fundamental roots in a subsystem of type $A_{2}$. Now the commutator
formula shows that%
\[
U_{\alpha+\beta}(R)=[U_{\alpha}(R)Z(R),x_{\beta}(1)],
\]
so $U_{\alpha+\beta}(R)$ is definable; and $\alpha+\beta$ has the same length
as $\alpha$.

\medskip

\emph{Case 2}: There is no such $\beta$. Then there exist roots $\beta$ and
$\gamma$ such that $\alpha,~\beta,~\gamma$ form a fundamental system of type
$B_{3}$ or $C_{3},$ with $\beta$ in the middle and of the same length as
$\gamma$. Moreover, $U_{\beta}(R)$ is definable by Case 1.

Now if $\alpha$ is short and $\beta$ is long, then $2\alpha+\beta$ is a long
root, so $U_{2\alpha+\beta}(R)$ is definable. The formula
\[
\lbrack x_{\alpha}(1),x_{\beta}(r)z]=x_{\alpha+\beta}(\pm r)x_{2\alpha+\beta
}(\pm r)
\]
($z\in Z$) shows that if $g\in U_{\alpha+\beta}(R)$ then there exist $v\in
U_{\beta}(R)Z(R)$ and $w\in U_{2\alpha+\beta}(R)$ such that $gw^{-1}%
=[x_{\alpha}(1),v]$. As%
\[
U_{\alpha+\beta}U_{2\alpha+\beta}\cap U_{\alpha+\beta}Z=U_{\alpha+\beta}%
\]
it follows that $g\in U_{\alpha+\beta}(R)$ if and only if $g\in U_{\alpha
+\beta}(R)Z(R)$ and there exist $v,~w$ as above satisfying $gw^{-1}%
=[x_{\alpha}(1),v]$. Thus $U_{\alpha+\beta}(R)$ is definable; as $\alpha
+\beta$ is short the result follows for $U_{\alpha}(R).$

Suppose finally that $\alpha$ is long and $\beta$ is short. The preceding
argument, swapping the roles of $\alpha$ and $\beta$, shows that $g\in
U_{2\beta+\alpha}(R)$ if and only if $g\in U_{2\beta+\alpha}(R)Z(R)$ and there
exist $v\in U_{\alpha}(R)Z(R)$ and $w\in U_{\beta+\alpha}(R)$ such that
$gw^{-1}=[x_{\alpha}(1),v]$. Also $U_{\beta+\alpha}(R)$ is definable becaue
$\beta+\alpha$ is short like $\beta$, and so $U_{2\beta+\alpha}(R)$ is
definable. This finishes the proof as $2\beta+\alpha$ is long like $\alpha$.

\section{Bi-interpretation\label{bisection}}

In this section we shall assume Corollary \ref{defibl} and deduce Theorem
\ref{thm:bi-interpretation}.

A bi-interpretation between $R$ and $G(R)$ has four ingredients, which we
describe in the form they occur here (which is not the most general form).
`Definability' will be in one of two first-order languages, the language
$L_{\mathrm{gp}}$ of group theory and the language $L_{\mathrm{rg}}$ of ring
theory. We set $\Gamma=G(R)$, in an attempt to avoid a forest of symbols.

\begin{enumerate}
\item An interpretation of $R$ in $\Gamma;$ in most cases, this consists in an
identification of $R$ with a definable abelian subgroup $R^{\prime}$ of
$\Gamma$ such that addition in $R^{\prime}$ is the group operation in
$\Gamma,$ and multiplication in $R^{\prime}$ is definable in $\Gamma$ (thus
the ring structure on $R^{\prime}$ is $L_{\mathrm{gp}}$ definable); in one
special case, we instead take $R^{\prime}$ to be the image in $\Gamma
/\mathrm{Z}(\Gamma)$ of a definable abelian subgroup of $\Gamma$ (the target
of an interpretation can be the quotient\emph{ }of $\Gamma$ by a definable
equivalence relation, see \cite{HMT}, \S 5.3).

\item An interpretation of $\Gamma$ in $R$; namely, for some $d\in\mathbb{N}$
an identification of $\Gamma$ with a subgroup $\Gamma^{\dag}$ of
$\mathrm{GL}_{d}(R),$ where $\Gamma^{\dag}$ is definable in $L_{\mathrm{rg}}$
(thus the group structure on $\Gamma^{\dag}$ is $L_{\mathrm{rg}}$ definable,
being just matrix multiplication);

\item An $L_{\mathrm{gp}}$ definable group isomorphism from $\Gamma$ to
$\Gamma^{\dag\prime}$, the image of $\Gamma^{\dag}$ in $\mathrm{GL}%
_{d}(R^{\prime});$

\item An $L_{\mathrm{rg}}$ definable ring isomorphism from $R$ to
$R^{\prime\dag},$ the image of $R^{\prime}$ in $\mathrm{GL}_{d}(R).$
\end{enumerate}

We assume to begin with that each root group $U_{\alpha}(R)$ is definable; the
small changes needed to deal with the exceptional case in Cor. \ref{defibl}
are indicated at the end of this section.

\subsection*{Interpreting $R$ in $G(R)$}

\begin{lemma}
\label{comp}If $U_{1},\ldots,U_{q}$ are distinct positive root subgroups then
the mapping $\pi_{1}:U_{1}(R)\ldots U_{q}(R)\rightarrow U_{1}(R)$ that sends
$u_{1}\ldots u_{q}$ to $u_{1}$ (in the obvious notation) is definable.
\end{lemma}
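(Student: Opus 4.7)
The plan is to show that the graph of $\pi_1$ is cut out by a first-order $L_{\mathrm{gp}}$-formula with parameters, which reduces to (a) having each $U_i(R)$ definable and (b) knowing that the decomposition $g = u_1\cdots u_q$ with $u_i\in U_i(R)$ is unique. Item (a) is the running assumption in this subsection, supplied by Corollary \ref{defibl}. So the real content is item (b).

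For the uniqueness of factorization I would appeal to the standard Chevalley--Steinberg result (see \cite{C}, \S 5.3, or \cite{St}): for any fixed ordering $\alpha_1,\ldots,\alpha_m$ of the positive roots, the product morphism
\[
U_{\alpha_1}\times \cdots \times U_{\alpha_m} \longrightarrow U^{+}
\]
is an isomorphism of schemes, and in particular induces a bijection on $R$-points. Given distinct positive root subgroups $U_1,\ldots,U_q$ in any prescribed order, I would extend this list to a complete ordering of all positive roots by appending the missing ones at the end in any order. If $u_1\cdots u_q = u_1'\cdots u_q'$ with $u_i,u_i'\in U_i(R)$, then setting all appended coordinates to $1$ on both sides and applying the uniqueness in the full product gives $u_i=u_i'$ for $i=1,\ldots,q$. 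Hence $u_1$ is determined by the product, and $\pi_1$ is an honestly defined function on $U_1(R)\cdots U_q(R)$.

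Putting this together, the graph of $\pi_1$ is
\[
\{\,(g,v) : v\in U_1(R)\text{ and }\exists u_2\in U_2(R),\ldots,\exists u_q\in U_q(R)\text{ with }g=v\,u_2\cdots u_q\,\},
\]
which is a first-order formula in $L_{\mathrm{gp}}$ using the same parameters that define each $U_i(R)$ via Corollary \ref{defibl}. By the uniqueness established above, this formula indeed defines the map $u_1\cdots u_q\mapsto u_1$. There is no serious obstacle here: once Corollary \ref{defibl} is granted (as we have assumed in this section) and the Chevalley--Steinberg uniqueness of expression in $U^{+}$ is invoked, the lemma is essentially a formal translation of the set-theoretic description of $\pi_1$ into a first-order formula.
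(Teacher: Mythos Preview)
Your proposal is correct and follows essentially the same approach as the paper: both rely on definability of each $U_i(R)$ (granted by Corollary~\ref{defibl}) together with uniqueness of the factorization $u_1\cdots u_q$, which the paper cites directly from \cite{St}, Lemma~18, Cor.~2. The paper simply compresses your existential formula into the equivalent statement $\{u_1\}=gU_q(R)\cdots U_2(R)\cap U_1(R)$, but the content is the same.
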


\begin{proof}
If $g=u_{1}\ldots u_{q}$ then%
\[
\{u_{1}\}=gU_{q}(R)\ldots U_{2}(R)\cap U_{1}(R)
\]
(cf. \cite{St} Lemma 18, Cor. 2).
\end{proof}

\begin{lemma}
\label{cabdef}Let $\alpha$ and $\beta$ be any two roots. Then the mapping%
\begin{align*}
c_{\alpha\beta}:U_{\alpha}(R)  &  \rightarrow U_{\beta}(R)\\
x_{\alpha}(r)  &  \longmapsto x_{\beta}(r)
\end{align*}
is definable.
\end{lemma}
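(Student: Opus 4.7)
I would realize $c_{\alpha\beta}$ as a finite composition of definable bijections between root subgroups, each produced by one of two elementary operations: (a) conjugation by a fixed Weyl coset representative $n_{w}\in N(R)$, which is first-order with $n_{w}$ as a parameter; or (b) taking a commutator $[x_{\gamma}(1),\,-\,]$ with a fixed elementary element, followed by the projection $\pi_{1}$ of Lemma~\ref{comp} onto a chosen component of a product of root subgroups. Each such operation produces, at the level of the ring parameter, a map $r\mapsto\varepsilon r$ with $\varepsilon=\pm 1$ determined by Chevalley structure constants, so the final composition will differ from $c_{\alpha\beta}$ by at most an overall sign, correctable by the definable group inversion on $U_{\beta}(R)$.

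For $\alpha$ and $\beta$ of the same length, apply (a): since $W$ acts transitively on roots of each length, pick $w\in W$ with $w(\alpha)=\beta$ and use $x_{\alpha}(r)^{n_{w}}=x_{\beta}(\varepsilon_{w}r)$.

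For roots of different lengths, it suffices, after chaining with same-length steps along the Dynkin diagram, to treat $\alpha$ short and $\beta$ long spanning a rank-$2$ subsystem of type $B_{2}$ or $G_{2}$. Naive short-to-long commutators introduce non-unit integer factors (e.g.\ $\pm 2$ in $B_{2}$ or $\pm 3$ in $G_{2}$), so I would first define the long-to-short map $c_{\beta\alpha}$. The Chevalley formula gives
\[
[x_{\alpha}(1),x_{\beta}(s)]\;=\;x_{\alpha+\beta}(\pm s)\cdot\prod_{i\geq 2} x_{i\alpha+\beta}(\ast),
\]
and applying Lemma~\ref{comp} with $U_{\alpha+\beta}$ listed first in the positive-root ordering yields a definable map $U_{\beta}(R)\to U_{\alpha+\beta}(R)$ of the form $s\mapsto\pm s$. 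Composing with Weyl conjugation carrying the short root $\alpha+\beta$ to the short root $\alpha$, and, if needed, with inversion on $U_{\alpha}(R)$, gives $c_{\beta\alpha}$. Then $c_{\alpha\beta}$ is definable as the inverse relation: swap the two variables in the formula defining $c_{\beta\alpha}$.

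\textbf{Main obstacle.} The only real subtlety is sign and normalization bookkeeping: each extracted coefficient must be a unit, not $\pm 2$ or $\pm 3$. The resolution is to build every elementary step either as a long-to-short commutator or as a same-length Weyl conjugation, both of which have structure constants in $\{\pm 1\}$; the short-to-long direction is then recovered by inverting the defined relation rather than constructed directly.
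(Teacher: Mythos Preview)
Your proposal is correct and follows essentially the same route as the paper: same-length roots are handled by Weyl conjugation with a sign correction, the long-to-short transition is realized via a commutator $[x_{\mu}(\pm 1),x_{\nu}(s)]$ (with $\mu$ short, $\nu$ long) followed by the projection of Lemma~\ref{comp} onto the short component $U_{\mu+\nu}$, and the short-to-long direction is then obtained by inverting this definable bijection. Your identification of the key obstacle---that one must avoid the non-unit structure constants $\pm 2,\pm 3$ by always commuting in the long-to-short direction---is exactly the point the paper exploits.
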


\begin{proof}
Suppose first that $\alpha$ and $\beta$ are the same length. Then there exist
an element $w$ in the Weyl group such that $w(\alpha)=\beta$, and a
representative $n_{w}$ for $w,$ with $n_{w}\in N(R)$, such that $x_{\alpha
}(r)^{n_{w}}=x_{\beta}(\eta r)$ for all $r\in R$, where $\eta=\pm1$ (\cite{C},
lemma 7.2.1). So we can define $c_{\alpha\beta}(g)=g^{\eta n_{w}}$.

Now suppose that $\alpha$ is long and $\beta$ is short. We can find a short
root $\mu$ and a long root $\nu$ such that $\mu+\nu=\gamma$ is a short root.
The commutator formula gives (for a suitable choice of sign)%
\[
\lbrack x_{\mu}(\pm1),x_{\nu}(s)]=x_{\gamma}(s)u_{3}...u_{q}%
\]
where $u_{i}\in U_{j\mu+l\nu},$ $j+l=i$ (and $q\leq5$) , so by Lemma
\ref{comp} the map $c_{\nu\gamma}$ is definable. It follows by the first case
that $c_{\alpha\beta}=c_{\alpha\nu}c_{\nu\gamma}c_{\gamma\beta}$ is definable.

Finally if $\alpha$ is short and $\beta$ is long we have $c_{\alpha\beta
}=c_{\beta\alpha}^{-1}$.
\end{proof}

\begin{lemma}
\label{defmult} Let $\alpha$, $\beta$ and $\gamma$ be any roots. The mapping%
\begin{align*}
m_{\alpha\beta\gamma}:U_{\alpha}(R)\times U_{\beta}(R)  &  \rightarrow
U_{\gamma}(R)\\
\left(  x_{\alpha}(r),x_{\beta}(s)\right)   &  \longmapsto x_{\gamma}(rs)
\end{align*}
is definable.
\end{lemma}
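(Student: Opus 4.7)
The plan is to exhibit a single ``universal'' commutator inside $G(R)$ whose projection onto a root subgroup captures the product $rs\in R$, and then to transport this relation to an arbitrary triple $(\alpha,\beta,\gamma)$ using the definable bijections $c_{\alpha\beta}$ of Lemma~\ref{cabdef}.

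First I would fix a pair of linearly independent roots $\alpha_{0},\beta_{0}\in\Phi$ with $\gamma_{0}:=\alpha_{0}+\beta_{0}\in\Phi$ such that the Chevalley commutator formula takes the form
\[
[x_{\alpha_{0}}(r),x_{\beta_{0}}(s)]=x_{\gamma_{0}}(\epsilon\, rs)\cdot\prod_{\substack{i,j\geq1\\(i,j)\neq(1,1)\\i\alpha_{0}+j\beta_{0}\in\Phi}}x_{i\alpha_{0}+j\beta_{0}}(c_{ij}\,r^{i}s^{j})
\]
with $\epsilon=\pm1$. Such a pair exists in every irreducible root system of rank at least $2$: in every type other than $B_{2}=C_{2}$ one may take $(\alpha_{0},\beta_{0})$ to be two simple roots of an embedded $A_{2}$-subsystem, so that the right-hand side collapses to the single factor $x_{\gamma_{0}}(\pm rs)$; in type $B_{2}$ one instead picks $\alpha_{0}$ short and $\beta_{0}$ long with $\alpha_{0}+\beta_{0}\in\Phi$, and the formula becomes the two-term product $x_{\gamma_{0}}(\pm rs)\cdot x_{2\alpha_{0}+\beta_{0}}(\pm r^{2}s)$.

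Next, I would apply Lemma~\ref{comp}, ordering the positive roots so that $\gamma_{0}$ appears first among those in the support of the commutator; this yields a definable map that sends $[x_{\alpha_{0}}(r),x_{\beta_{0}}(s)]$ to its $U_{\gamma_{0}}(R)$-coordinate $x_{\gamma_{0}}(\epsilon\, rs)$. Composing with the group inversion of $U_{\gamma_{0}}(R)$ if $\epsilon=-1$ produces a definable map
\[
m_{0}\colon U_{\alpha_{0}}(R)\times U_{\beta_{0}}(R)\longrightarrow U_{\gamma_{0}}(R),\qquad (x_{\alpha_{0}}(r),x_{\beta_{0}}(s))\longmapsto x_{\gamma_{0}}(rs).
\]
For arbitrary roots $\alpha,\beta,\gamma$, I would then set
\[
m_{\alpha\beta\gamma}(u,v)=c_{\gamma_{0}\gamma}\bigl(m_{0}(c_{\alpha\alpha_{0}}(u),\,c_{\beta\beta_{0}}(v))\bigr),
\]
which is definable by Lemma~\ref{cabdef} and the preceding step.

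The one point that actually requires attention is that the coefficient of $rs$ at $\gamma_{0}$ in the chosen commutator be a unit of $\mathbb{Z}$, i.e.\ $\pm1$, so that no ``division'' in $R$ is called for; this is automatic because the structure constant $N_{\alpha_{0},\beta_{0}}$ equals $\pm 1$ whenever $\beta_{0}$ sits at the end of the $\alpha_{0}$-string through $\beta_{0}$, which holds for the pairs selected above. Everything else is straightforward bookkeeping with Lemmas~\ref{comp} and~\ref{cabdef}, and no additional hypotheses on $R$ are used.
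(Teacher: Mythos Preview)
Your proposal is correct and follows essentially the same approach as the paper: pick roots $\alpha_{0},\beta_{0}$ with $\gamma_{0}=\alpha_{0}+\beta_{0}$ so that the leading commutator coefficient is $\pm1$, extract the $U_{\gamma_{0}}$-component via Lemma~\ref{comp}, and transport to arbitrary $(\alpha,\beta,\gamma)$ using the maps $c_{\alpha\beta}$ of Lemma~\ref{cabdef}. The paper phrases the reduction slightly differently (taking $\alpha$ and $\gamma$ short rather than sitting inside an $A_{2}$), and absorbs the sign $\epsilon$ into the first argument rather than composing with inversion, but these are cosmetic variations on the same argument.
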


\begin{proof}
By the preceding lemma we may suppose that $\alpha$ and $\gamma$ are short and
that $\gamma=\alpha+\beta.$ Then apply the same argument to the formula%
\[
\lbrack x_{\alpha}(\pm r),x_{\beta}(s)]=x_{\gamma}(rs)u_{3}...u_{q}.
\]

\end{proof}

Now we interpret $R$ in $\Gamma$ as follows: fix a root $\alpha_{0},$ set
$R^{\prime}=U_{\alpha_{0}}(R)$ and identify $r\in R$ with $r^{\prime
}=x_{\alpha_{0}}(r)$. Then $m_{\alpha_{0}\alpha_{0}\alpha_{0}}$ defines
multiplication in $R^{\prime}$. Since addition in $R^{\prime}$ is simply the
group operation, we may infer

\begin{corollary}
\label{poly}Let $f$ be a polynomial over $\mathbb{Z}$. Then the mapping
$U_{\alpha_{0}}(R)\rightarrow U_{\alpha_{0}}(R)$ given by $r^{\prime
}\longmapsto f(r^{\prime})$ is $L_{\mathrm{gp}}$ definable.
\end{corollary}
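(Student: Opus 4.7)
The plan is a straightforward induction on the construction of $f$, using the identification of $R$ with $R' := U_{\alpha_{0}}(R)$ (under which ring addition becomes the group operation in $\Gamma$) together with Lemma \ref{defmult} specialized to $\alpha = \beta = \gamma = \alpha_{0}$, which furnishes an $L_{\mathrm{gp}}$-definable multiplication map $m := m_{\alpha_{0}\alpha_{0}\alpha_{0}}$ on $R'$.

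First I would fix the element $1' := x_{\alpha_{0}}(1) \in \Gamma$ as a parameter, permissible since ``definable'' means ``definable with parameters''. The identity self-map of $R'$ and the constant map $r' \mapsto 1'$ are both trivially $L_{\mathrm{gp}}$-definable. I would then record three closure properties: given $L_{\mathrm{gp}}$-definable self-maps $g, h$ of $R'$, the self-maps
\begin{align*}
r' &\longmapsto g(r')\,h(r'), \\
r' &\longmapsto g(r')^{-1}, \\
r' &\longmapsto m\bigl(g(r'),\, h(r')\bigr)
\end{align*}
are again $L_{\mathrm{gp}}$-definable; these correspond under the identification $R \leftrightarrow R'$ to the polynomial operations $g + h$, $-g$, and $g \cdot h$, respectively. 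The first two invoke only the group structure on $\Gamma$, while the third invokes Lemma \ref{defmult}.

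Since every $f \in \mathbb{Z}[X]$ is built from $1$ and $X$ by finitely many applications of ring addition, negation and multiplication, induction on the term structure of $f$ now produces, for each fixed $f$, a finite $L_{\mathrm{gp}}$-formula defining $r' \longmapsto f(r)'$. There is no real obstacle here: this is essentially a routine consequence of Lemma \ref{defmult}, the substantive work having been done earlier in Lemmas \ref{comp}--\ref{defmult}. The only mild subtlety is that integer coefficients $a_{i} \in \mathbb{Z}$ occurring in $f$ are unfolded into iterated group products of $1'$ with itself (together with inverses for negative $a_{i}$), so the length of the defining formula grows with $f$; but it remains a single finite formula for each individual polynomial, which is all that is required.
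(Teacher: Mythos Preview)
Your proof is correct and follows exactly the paper's approach: the paper simply remarks that since $m_{\alpha_{0}\alpha_{0}\alpha_{0}}$ defines multiplication on $R'$ and addition is the group operation, the corollary follows, without spelling out the induction you have written in full. Your version is a faithful (and more detailed) unpacking of the paper's one-line justification.
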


\subsection*{Interpreting $G(R)$ in $R$}

The group scheme $G$ is defined as follows (see e.g. \cite{A}, \S 1). Fix a
faithful representation of the Chevalley group $G(\mathbb{C})$ in
$\mathrm{GL}_{d}(\mathbb{C})$. The ring \linebreak$\mathbb{Z}[G]=\mathbb{Z}%
[X_{ij};i,j=1,\ldots,d]$ is the $\mathbb{Z}$-algebra generated by the
co-ordinate functions on $G,$ taken w.r.t. a suitably chosen basis for the
vector space $\mathbb{C}^{d}$. For a ring $R$ we define%
\[
G(R)=\mathrm{Hom}(\mathbb{Z}[G],R).
\]
Thus an element $g\in G(R)$ may be identified with the matrix $(X_{ij}(g))$,
and the group operation is given by matrix multiplication.

Let $T_{ij}$ be independent indeterminates. The kernel of the obvious
epimorphism $\mathbb{Z}[\mathbf{T}]\rightarrow\mathbb{Z}[G]$ is an ideal,
generated by finitely many polynomials $P_{l}(\mathbf{T}),$ $l=1,\ldots,s$
say. For a matrix $g=(g_{ij})\in\mathrm{M}_{d}(R),$ we have%
\begin{equation}
g\in G(R)\Longleftrightarrow P_{l}(g_{ij})=0~~(l=1,\ldots,s). \label{Zariski}%
\end{equation}
Thus $G(R)$ is $L_{\mathrm{rg}}$ definable as a subset of $\mathrm{M}_{d}(R)$.

\subsection*{Definable isomorphisms}

To complete \textbf{Step 3}, we exhibit a definable isomorphism $\theta
:G(R)\rightarrow G(R^{\prime})\subseteq\mathrm{M}_{d}(R^{\prime})$. The
definition of such a $\theta$ is obvious; the work is to express this
definition in first-order language.

We recall the construction of $G(R)$ in more detail (cf \cite{St}, Chapters 2
and 3). For each root $\alpha$ there is a matrix $X_{\alpha}\in\mathrm{M}%
_{d}(\mathbb{Z})$ such that%
\begin{equation}
x_{\alpha}(r)=\exp(rX_{\alpha})=1+rM_{1}(\alpha)+\ldots+r^{q}M_{q}%
(\alpha)~~\ (r\in R) \label{root_matrix}%
\end{equation}
where $M_{i}(\alpha)=X_{\alpha}^{i}/i!$ has integer entries, and $q$ is fixed
(usually $q\leq2$).

We have chosen a root subgroup\ $U_{0}=U_{\alpha_{0}}(R)$ and identified it
with the ring $R$ by $r\longmapsto r^{\prime}=x_{\alpha_{0}}(r)$. We have
identified $\Gamma=G(R)$ with a group of matrices. Now define $\theta
:\Gamma\rightarrow\mathrm{M}_{d}(R^{\prime})=U_{0}^{d^{2}}\subseteq
\Gamma^{d^{2}}$ by%
\[
g\theta=(g_{ij}^{\prime}).
\]
Giving $R^{\prime}$ the ring structure inherited from $R,$ this map is
evidently a group isomorphism from $\Gamma$ to its image in $\mathrm{GL}%
_{d}(R^{\prime})$.

\begin{lemma}
\label{def_theta}For each root $\alpha$ the restriction of $\theta$ to
$U_{\alpha}(R)$ is definable.
\end{lemma}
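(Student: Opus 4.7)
The plan is to read off the restriction of $\theta$ directly from the exponential expression (\ref{root_matrix}) and then express it as a composition of maps that have already been shown to be $L_{\mathrm{gp}}$-definable, namely $c_{\alpha\alpha_{0}}$ (Lemma \ref{cabdef}) and the polynomial evaluation maps on $R'$ (Corollary \ref{poly}).

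First, I would fix $\alpha$ and use (\ref{root_matrix}) to write, for each pair of indices $(i,j)$,
\[
\bigl(x_{\alpha}(r)\bigr)_{ij}=p^{\alpha}_{ij}(r),
\]
where $p^{\alpha}_{ij}(T)\in\mathbb{Z}[T]$ is the polynomial obtained from $1+TM_{1}(\alpha)+\cdots+T^{q}M_{q}(\alpha)$ (the constant term being $\delta_{ij}$ and the higher coefficients being the corresponding entries of the integer matrices $M_{k}(\alpha)$). By the definition of $\theta$, the $(i,j)$-entry of $x_{\alpha}(r)\theta$ is the element $p^{\alpha}_{ij}(r)'=x_{\alpha_{0}}\!\bigl(p^{\alpha}_{ij}(r)\bigr)\in R'$.

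Next, given $g\in U_{\alpha}(R)$, Lemma \ref{cabdef} supplies the definable map $c_{\alpha\alpha_{0}}:U_{\alpha}(R)\to U_{\alpha_{0}}(R)=R'$ sending $g=x_{\alpha}(r)$ to $r'=x_{\alpha_{0}}(r)$. Corollary \ref{poly} then tells us that for each fixed $(i,j)$ the map $r'\mapsto p^{\alpha}_{ij}(r')$ is $L_{\mathrm{gp}}$-definable on $R'$. Composing, the $(i,j)$-coordinate of $g\theta$ is obtained as
\[
(g\theta)_{ij}=p^{\alpha}_{ij}\bigl(c_{\alpha\alpha_{0}}(g)\bigr),
\]
a definable function of $g$. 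Since $\theta$ lands in $M_{d}(R')\subseteq\Gamma^{d^{2}}$ and each of its $d^{2}$ coordinates is definable, the restriction $\theta\!\upharpoonright_{U_{\alpha}(R)}$ is definable.

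There is no real obstacle here: the substantive work was done in establishing Lemma \ref{cabdef} and Corollary \ref{poly}; the present lemma is essentially just an assembly step, exploiting the fact that the matrix entries of a root element are \emph{integral} polynomials in the root parameter so that no extra first-order data about $R$ is required. The only point to watch is that the constant term $\delta_{ij}$ of $p^{\alpha}_{ij}$ must be interpreted correctly in $R'$, but this is already subsumed under the statement of Corollary \ref{poly}, which applies to arbitrary $\mathbb{Z}$-polynomials.
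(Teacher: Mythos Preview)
Your proof is correct and follows essentially the same route as the paper: express each matrix entry of $x_{\alpha}(r)$ as an integer polynomial in $r$ via (\ref{root_matrix}), recover $r'$ from $g$ using the definable map $c_{\alpha\alpha_{0}}$ of Lemma \ref{cabdef}, and then apply Corollary \ref{poly} to each coordinate. If anything, your treatment of the constant term $\delta_{ij}$ is slightly more careful than the paper's presentation.
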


\begin{proof}
Let $\alpha$ be a root, fix $i$ and $j,$ and write $\theta_{ij}$ for the map
$g\longmapsto g_{ij}^{\prime}$. Let $m_{l}$ denote the $(i,j)$ entry of the
matrix $M_{l}(\alpha).$ Then for $g=x_{\alpha}(r)$ we have%
\[
g\theta_{ij}=(1+m_{1}r+\ldots+m_{q}r^{q})^{\prime}.
\]
As $r^{\prime}=x_{\alpha_{0}}(r)=gc_{a\alpha_{0}},$ it follows from Cor.
\ref{poly} that the restriction of $\theta_{ij}$ to $U_{\alpha}(R)$ is
definable, and as this holds for all $i,~j$ it establishes the claim.
\end{proof}

\medskip

Say the roots are $\alpha_{1},\ldots,\alpha_{q}.$ For a natural number $N$ put%
\[
X_{N}=\left(  \prod_{i=1}^{q}U_{\alpha_{i}}(R)\right)  \cdot\ldots\cdot\left(
\prod_{i=1}^{q}U_{\alpha_{i}}(R)\right)
\]
with $N$ factors. 
Thus $X_{N}$ is a definable set, every product of $N$ elementary root elements
lies in $X_{N},$ and the preceding lemma implies that the restriction of
$\theta$ to $X_{N}$ is definable.

If $G(R)$ has finite elementary width $N$ then $G(R)=X_{N}$ and so $\theta$ is definable.

Suppose alternatively that $G$ is adjoint. Then
\begin{equation}
\bigcap_{i=1}^{q}\mathrm{C}_{G}(x_{\alpha_{i}}(1))=\mathrm{Z}(G)=1,
\label{centre}%
\end{equation}
(see Lemma \ref{mainlemma} and the discussion following it).

We quote

\begin{lemma}
\emph{(\cite{Sp}, Cor. 5.2)} There exists $L\in\mathbb{N}$ such that for each
root $\alpha$ and every $g\in G(R)$ the commutator $[x_{\alpha}(1),g]$ is a
product of $3L$ elementary root elements.
\end{lemma}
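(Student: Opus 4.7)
The plan is to bound $[x_\alpha(1),g]$ by reducing it to a conjugate of a single root element and then expressing that conjugate via the Bruhat decomposition. Write
\[
[x_\alpha(1),g]=x_\alpha(1)\cdot g^{-1}x_\alpha(-1)g.
\]
Since the first factor is already a single root element, it suffices to express $g^{-1}x_\alpha(-1)g$ as a product of at most $3L-1$ elementary root elements, with $L$ depending only on the root system $\Phi$.

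Next I would fix a decomposition of $g$ in $G(\overline{k})$: write $g=u_{-}\cdot tn_{w}\cdot u_{+}$ as in the Bruhat form~(\ref{bruhaha}), choosing the Weyl representative $n_{w}$ inside the subgroup generated by the elements $x_\gamma(\pm 1)$, so that it is itself a product of $\le 3|\Phi^{+}|$ root elements. Conjugation by the torus element $t$ multiplies any $x_\gamma(\xi)$ by a character value and so preserves each root subgroup; conjugation by $n_{w}$ permutes root subgroups (up to sign); and conjugation of $x_\alpha(-1)$ by $u_\pm=\prod x_{\gamma}(r_\gamma)$ expands, via iterated application of the Chevalley commutator formula summarised in the Appendix, into a product of at most $|\Phi^{+}|$ root elements, since each new commutator contributes only factors $x_{i\alpha+j\gamma}$ with $i,j\ge 1$ and there are boundedly many such root heights. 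Collecting the three blocks (one from $u_+$, one from the torus/Weyl part, one from $u_-$) accounts for the factor $3$ in the bound $3L$.

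The main obstacle is that the Bruhat decomposition is guaranteed only over $\overline{k}$, whereas the elements in the final product must lie in $G(R)$ in order to count as \emph{elementary root elements} in our sense. I would handle this either (a) by reducing to $g$ lying in the subgroup generated by root elements (legitimate under the adjoint or finite-elementary-width hypothesis that is in force in the surrounding discussion) and then arguing by induction on the number of root factors of $g$, using that conjugation by one root element $x_\gamma(s)$ expands $x_\alpha(-1)$ into a bounded product by the commutator formula; or (b) via a scheme-theoretic argument that the morphism $g\mapsto[x_\alpha(1),g]$ factors through the conjugacy class of $x_\alpha(1)$, which is a variety of dimension $\le 2|\Phi^{+}|$, and on which a uniform bounded parametrisation by root subgroups is available. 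Either route yields a bound of the form $3L$ with $L$ depending only on $\Phi$, completing the proof.
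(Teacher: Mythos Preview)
The paper does not prove this lemma at all: it is simply quoted from \cite{Sp}, Corollary~5.2, where it is obtained via Stepanov's \emph{universal localization} machinery. So there is no ``paper's own proof'' to compare with; what matters is whether your sketch actually establishes the result.

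It does not, and the gap is essential. Your route~(a) is circular. The lemma is invoked in the paper precisely in the adjoint case \emph{as a substitute for} finite elementary width; you cannot assume that an arbitrary $g\in G(R)$ lies in the elementary subgroup, let alone as a product of boundedly many root factors. Even granting that $g=x_{\gamma_1}(s_1)\cdots x_{\gamma_m}(s_m)$, your induction gives a bound that grows with $m$: conjugating a product of $N$ root elements by a single $x_\gamma(s)$ yields (via the commutator formula) at most $cN$ root elements for some constant $c$, so after $m$ conjugations you get roughly $c^m$ factors, not a uniform $L$ independent of $g$. Your Bruhat argument has the defect you yourself flag: the factors $u_\pm$, $t$ have entries in $\overline{k}$, not in $R$, so the resulting root elements are not elementary root elements of $G(R)$. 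Route~(b) is not a proof: a bounded-dimension variety over $\overline{k}$ says nothing about bounded-length factorisations over an arbitrary integral domain $R$.

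What Stepanov actually does is work in a polynomial extension $R[t]$ and exploit the normality of the elementary subgroup together with a localisation--patching argument to show that the map $g\mapsto [x_\alpha(1),g]$ lands in a set with uniformly bounded elementary length, with the bound depending only on $\Phi$. This is genuinely non-elementary and is the reason the paper cites it rather than proving it.
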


Taking $N=3L+1$ we see that each $x_{\alpha}(1)^{g}\in X_{N}$. Set
$v_{i}=x_{\alpha_{i}}(1)\theta.$ Now let $g\in G(R)$ and $h\in G(R^{\prime}).$
If $g\theta=h$ then for $i=1,\ldots,q$ there exists $x_{i}\in X_{N}$ such that%
\begin{align*}
x_{\alpha_{i}}(1)^{g}  &  =x_{i}\\
x_{i}\theta &  =v_{i}^{h}.
\end{align*}
Conversely, if this holds then $v_{i}^{h}=v_{i}^{g\theta}$ for each $i,$ so
$g\theta\cdot h^{-1}$ centralizes each $v_{i};$ as $\theta$ is an isomorphism
it follows from (\ref{centre}) that $g\theta=h$. Thus the statement
`$g\theta=h$' is expressible by a first-order formula, and $\theta$ is definable.

\medskip

To complete \textbf{Step 4}, define $\psi:R\rightarrow U_{0}\subseteq
\mathrm{M}_{d}(R)$ by $r\psi=r^{\prime}=x_{\alpha_{0}}(r)$. This is a ring
isomorphism by definition, when $U_{0}$ is given the appropriate ring
structure. The expression (\ref{root_matrix}) now implies

\begin{lemma}
\label{def_psi}The map $\psi$ is $L_{\mathrm{rg}}$ definable.
\end{lemma}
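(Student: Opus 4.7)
The plan is to read off the definability of $\psi$ directly from the explicit formula (\ref{root_matrix}). I would identify $\mathrm{M}_d(R)$ with $R^{d^2}$ in the natural $L_{\mathrm{rg}}$-interpretation, so that $\psi$ becomes the map $R \to R^{d^2}$ whose $(i,j)$-coordinate, for $r \in R$, equals $\delta_{ij} + \sum_{l=1}^{q} m_l^{(i,j)} r^l$, where $m_l^{(i,j)} \in \mathbb{Z}$ is the $(i,j)$-entry of the fixed integer matrix $M_l(\alpha_0)$. The crucial observation is that each such coordinate is a polynomial in $r$ with integer coefficients, and those coefficients depend only on $\alpha_0$ and on the chosen matrix representation of $G$, not on $r$ or on $R$.

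Since every rational integer is a closed $L_{\mathrm{rg}}$-term (built from $0$ and $1$ via addition and subtraction), and polynomials over $\mathbb{Z}$ in the variable $r$ are therefore $L_{\mathrm{rg}}$-terms in $r$, the graph $\{(r, \psi(r)) : r \in R\}$ is cut out by a conjunction of $d^2$ polynomial equations in the variables $r$ and $(y_{ij})_{i,j \leq d}$. This conjunction is a quantifier-free $L_{\mathrm{rg}}$-formula and provides the required definition. There is no real obstacle to overcome here: the lemma is essentially the assertion that the $\mathbb{Z}$-scheme morphism $x_{\alpha_0} \colon \mathbb{A}^1 \to \mathrm{M}_d$ is defined by integral polynomials, which is immediate from the exponential formula (\ref{root_matrix}), and such a morphism is automatically $L_{\mathrm{rg}}$-definable on $R$-points. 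One may view this as the ring-theoretic mirror of Corollary \ref{poly}, which played the analogous role on the group side.
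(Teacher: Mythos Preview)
Your proposal is correct and follows exactly the paper's approach: the paper simply states that the lemma is implied by the expression (\ref{root_matrix}), and you have spelled out precisely why --- each matrix entry of $x_{\alpha_0}(r)$ is an integer polynomial in $r$, hence an $L_{\mathrm{rg}}$-term, so the graph of $\psi$ is defined by a quantifier-free formula.
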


\subsection*{When $U_{\alpha}(R)$ is not definable}

Set $K=\mathrm{Z}(\Gamma)$ and write $\symbol{126}:\Gamma\rightarrow\Gamma/K$
for the quotient map. Corollary \ref{defibl} shows that each of the subgroups
$U_{\alpha}(R)K$ is definable. Lemmas \ref{comp} - \ref{defmult} remain valid,
with essentially the same proofs, if each $U_{\alpha}(R)$ is replaced by
$U_{\alpha}(R)K$. As $U_{\alpha}(R)\cap K=1$ the map $\symbol{126}$ restricts
to an isomorphism $U_{\alpha}(R)\rightarrow\widetilde{U_{\alpha}%
(R)K}=\widetilde{U_{\alpha}(R)}$, and we define $R^{\prime}%
:=\widetilde{U_{\alpha_{0}}(R)},$ setting $r^{\prime}=\widetilde{x_{\alpha
_{0}}(r)}$. Then Corollary \ref{poly} remains valid if $U_{\alpha_{0}}(R)$ is
replaced by $\widetilde{U_{\alpha_{0}}(R)}$.

The interpretation of $\Gamma$ in $R$ is as above.

We have a definable ring isomorphism $\psi:R\rightarrow\widetilde{U_{0}}$ as
in Lemma \ref{def_psi}.

Similarly, the group isomorphism $\theta:\Gamma\rightarrow\mathrm{M}%
_{d}(R^{\prime})=\widetilde{U_{0}}^{d^{2}}\subseteq\widetilde{\Gamma}^{d^{2}}$
is definable: in the proof of Lemma \ref{def_theta}, we replace each $U_{i}$
by $U_{i}K,$ and then apply the map $\symbol{126}$ to each root element that
appears in the discussion.

The bi-interpretability of $\Gamma$ with $R$ is now established in all cases.

\section{Axiomatizability\label{ax}}

In \S \ref{bisection}\ we set up a bi-interpretation of a specific shape
between a group $\Gamma$ and a ring $R,$ spelt out explicitly in points 1. -
4. at the beginning of the section. As is well known, this implies a close
correspondence between first-order properties of the two structures; here we
explore some of the consequences (professional model theorists are invited to
skip the next few paragraphs!)

The interpretation of $R$ in $\Gamma$ involves two or three formulae: one, and
if necessary two, define the subset (it was $U_{\alpha}(R)$), or its quotient
($U_{\alpha}(R)\mathrm{Z}(\Gamma)/\mathrm{Z}(\Gamma)$), that we called
$R^{\prime};$ the third defines a binary operation $m$ on $R^{\prime}$. \ Let
$P_{1}$ be a sentence that expresses the facts

\begin{enumerate}
\item each of the definable mappings denoted $\pi_{1}$ in Lemma \ref{comp}
actually is a well defined mapping

\item the definition of $m$ does define a binary operation on the set
$R^{\prime}$

\item $(R^{\prime},+,m)$ is a commutative integral domain, where $+$ is the
group operation inherited from $\Gamma$.
\end{enumerate}

Let us call this ring $A_{\Gamma}$.

The sentence $P_{1}=P_{1}(\mathbf{g})$ involves some parameters $g_{1},\ldots
g_{r}$ from $G(R).$ Let $P_{1}^{\prime}$ denote the sentence $\exists
h_{1},\ldots,h_{r}.P_{1}(\mathbf{h})$. We shall use this convention for other
sentences later.

Now if $H$ is any group that satisfies $P_{1}^{\prime},$ the same formulae
define a ring $A_{H}$. For each $L_{\mathrm{rg}}$ formula $\alpha$ there is an
$L_{\mathrm{gp}}$ formula $\alpha^{\ast}$ such that $A_{H}\models\alpha$ iff
$H\models\alpha^{\ast},$ since ring operations in $A_{H}$ are expressible in
terms of the group operation in $H$. (Note that $\alpha^{\ast}$ will involve
parameters, obtained by substituting $h_{i}$ for $g_{i}.$)

Analogously, the equations on the right-hand side of (\ref{Zariski}) may be
expressed as a formula in $L_{\mathrm{rg}},$ that for any ring $S$ defines a
subset $G(S)$ of $S^{d^{2}}$; and if $S$ is an integral domain, the set $G(S)$
with matrix multiplication is a group. For each $L_{\mathrm{gp}}$ formula
$\beta$ there is an $L_{\mathrm{rg}}$ formula $\beta^{\dag}$ such that
$G(S)\models\beta$ iff $S\models\beta^{\dag}$.

Now in \S \ref{bisection} we give (i) an $L_{\mathrm{gp}}$ formula that
defines a group isomorphism $\theta:\Gamma\rightarrow G(A_{\Gamma})$, and (ii)
an $L_{\mathrm{rg}}$ formula that defines a ring isomorphism $\psi
:R\rightarrow A_{G(R)}$. The assertions that these formulae actually define
such isomorphisms can be expressed by (i) an $L_{\mathrm{gp}}$ sentence
$P_{2}$ and (ii) an $L_{\mathrm{rg}}$ sentence $P_{3}$, say.

The results of \S \ref{bisection} amount to this: if the group $G$ and the
ring $R$ satisfy the hypotheses of Theorem \ref{thm:bi-interpretation}, then
$G(R)$ satisfies the conjunction of $P_{1}^{\prime}$ and $P_{2}^{\prime}$, and
$R$ satisfies $P_{3}^{\prime}$, where $P_{3}^{\prime}$ is obtained from
$P_{3}$ by adding an existential quantifier over the (ring) variables
corresponding to the matrix entries of the original parameters $g_{i}$.

The correspondence $\alpha\rightarrow\alpha^{\ast}$ implies that any ring
axioms satisfied by $R$ can be expressed as properties of the group
$\Gamma=G(R)$.\ If these axioms happen to determine the ring up to
isomorphism, the existence of $\theta$ then shows that the corresponding
properties of $\Gamma$, in conjunction with $P_{1}^{\prime}$ and
$P_{2}^{\prime}$, determine $\Gamma$ up to isomorphism. In the same way, if
$G(R)$ happens to be determined by some family of group axioms, then a
corresponding family of ring properties, together with $P_{3}$, will determine
$R$.

To apply this observation we need

\begin{proposition}
\label{RGprops}\emph{(i)} If $G(R)$ is a finitely generated group then $R$ is
a finitely generated ring.

\emph{(ii) }If $G(R)$ is a Hausdorff topological group then $R$ is a Hausdorff
topological ring, and $R$ is profinite, locally compact or t.d.l.c. if $G(R)$
has the same property.
\end{proposition}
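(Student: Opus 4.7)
My plan is to handle (i) by a direct scheme-theoretic argument. Suppose $\Gamma=G(R)$ is generated by $g_{1},\dots,g_{n}$; writing each $g_{i}$ as a matrix in $\mathrm{GL}_{d}(R)$, let $R_{0}$ be the subring of $R$ generated by the (finitely many) matrix entries of $g_{1},\dots,g_{n}$ and of $g_{1}^{-1},\dots,g_{n}^{-1}$. Since the defining polynomials $P_{l}$ in (\ref{Zariski}) have integer coefficients, every $g_{i}$ lies in $G(R_{0})$, and the inclusion $\Gamma=\langle g_{1},\dots,g_{n}\rangle\subseteq G(R_{0})\subseteq\Gamma$ forces $G(R)=G(R_{0})$. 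In particular every $x_{\alpha_{0}}(r)\in\Gamma$ lies in $U_{\alpha_{0}}\cap G(R_{0})$, which by Lemma \ref{step1} equals $x_{\alpha_{0}}(R_{0})$; hence $R=R_{0}$ is a finitely generated ring.

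For (ii), I would give $R$ the topology transported from $R'\subseteq\Gamma$ (or from its image in $\Gamma/Z(\Gamma)$ in the exceptional case of the bi-interpretation) endowed with the subspace topology. Addition on $R$ is then continuous, being just the induced group operation. In any Hausdorff topological group centralizers are closed, so both $Z(\Gamma)$ and the double centralizer $\mathrm{C}_{\Gamma}\mathrm{C}_{\Gamma}(u)=U_{\alpha_{0}}(R)Z(R)$ are closed; after passing to $\Gamma/Z(\Gamma)$ where needed, $R'$ sits as a closed subgroup of a Hausdorff group and is therefore Hausdorff. The same closedness, combined with the fact that local compactness, profiniteness and the t.d.l.c.\ property are all inherited by closed subgroups and by Hausdorff quotients modulo closed normal subgroups, transfers each of these properties from $\Gamma$ to $R'$.

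The main obstacle will be establishing continuity of the ring multiplication: it was built in Lemma \ref{defmult} from the projection $\pi_{1}$ of Lemma \ref{comp}, whose first-order definition involves an existential quantifier and is not obviously continuous. To handle this I would invoke the scheme-theoretic fact that the ordered-product morphism
\[
U_{\alpha_{1}}\times\cdots\times U_{\alpha_{q}}\longrightarrow U_{\alpha_{1}}\cdots U_{\alpha_{q}}
\]
is an isomorphism of $\mathbb{Z}$-schemes (cf.\ \cite{St}, Lemma 18). On $R$-points both this map and its inverse are then given by integer-coefficient polynomials in the matrix entries and are therefore continuous; the coordinate projections of the inverse are exactly the maps $\pi_{i}$. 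Since commutators in $\Gamma$ are automatically continuous, the explicit formula used in Lemma \ref{defmult} to express $m_{\alpha_{0}\alpha_{0}\alpha_{0}}$ then delivers continuity of multiplication, completing the verification that $R$ is a topological ring of the required type.
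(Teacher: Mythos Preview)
Your argument for (i) is essentially the paper's: pass to the subring $R_0$ generated by the matrix entries of the generators and their inverses, observe $G(R)=G(R_0)$, and use Lemma~\ref{step1} on a root subgroup to conclude $R=R_0$.

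For (ii), your treatment of closedness (centralizers are closed in a Hausdorff group, hence so is the double centralizer $U_{\alpha_0}(R)Z(R)$) is correct and is in fact what the paper spells out more explicitly in the companion Proposition~\ref{gvr2}.

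The gap is in your argument for the continuity of $\pi_1$. You appeal to the scheme isomorphism $U_{\alpha_1}\times\cdots\times U_{\alpha_q}\cong U_{\alpha_1}\cdots U_{\alpha_q}$ and assert that, since the inverse is given by integer polynomials in the matrix entries, it is continuous. But the hypothesis is only that $G(R)$ carries \emph{some} Hausdorff group topology; nothing tells you that the coordinate functions $g\mapsto g_{ij}$ are continuous as maps $G(R)\to R$. Indeed, $R$ has no topology except the one you are in the process of constructing via $U_{\alpha_0}(R)$, and continuity of the matrix-entry maps into that copy of $R$ is essentially equivalent to what you are trying to prove. So ``polynomial in the entries, therefore continuous'' is circular here.

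The paper avoids this by arguing purely inside the topological group: it asserts that the multiplication map $U_1(R)\times\cdots\times U_q(R)\to U_1(R)\cdots U_q(R)$ is a homeomorphism (a ``topological direct product''), whence $\pi_1$ is continuous. In the profinite case this is immediate, since a continuous bijection from a compact space to a Hausdorff space is a homeomorphism; in the locally compact $\sigma$-compact case it follows from the open mapping theorem once the factors and their product are seen to be closed. That group-theoretic route is what you need in place of the scheme-theoretic one.
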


\begin{proof}
(i) Suppose $G=\left\langle g\,_{1},\ldots,g_{m}\right\rangle $. The entries
of the matrices $g_{i}^{\pm1}$ generate a subring $S$ of $R$, and then
$G(R)=G(S).$ Choose a root $\alpha$. Then
\[
U_{\alpha}(R)=U_{\alpha}(k)\cap G(R)=U_{\alpha}(k)\cap G(S)=U_{\alpha}(S).
\]
As the map $r\longmapsto x_{a}(r)$ is bijective it follows that $R=S$.

(ii) Suppose that $G(R)$ is a (Hausdorff) topological group. Let
$U_{0}=U_{\alpha_{0}}$ be the root group discussed in \S \ref{bisection}. Then
$U_{0}(R)$ is closed in the topology, by Lemma \ref{step1}. Thus with the
subspace topology $U_{0}(R)$ is a topological group; it is locally compact,
compact or totally disconnected if $G(R)$ has the same property.

We have seen that $R$ is isomorphic to a ring $R^{\prime},$ where the additive
group of $R^{\prime}$ is $U_{0}(R)$. It remains to verify that the ring
multiplication in $R^{\prime}$ is continuous. This in turn follows from the
facts (a) the commutator defines a continuous map $G(R)\times G(R)\rightarrow
G(R)$ and (b) the projection mapping $\pi_{1}$ described in Lemma \ref{comp}
is continuous, because $U_{1}(R)\ldots U_{q}(R)$ is a topological direct product.
\end{proof}

\medskip We have stated the proposition for $G(R)$ for the sake of clarity.
However a more general version is required:

\begin{proposition}
\label{gvr2}Let $H$\ be a group that satisfies $P_{1}^{\prime}$ and
$P_{2}^{\prime}$, and put $S=A_{H}$. Then \emph{(i)} and \emph{(ii)} of
Proposition \ref{RGprops} hold with $S$ in place of $R$ and $H$ in place of
$G(R)$.
\end{proposition}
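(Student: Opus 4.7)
My plan is to carry over the proof of Proposition \ref{RGprops} to the abstract setting, using $P_1'$ and $P_2'$ as surrogates for the assumption $H=G(R)$. First I would unpack the hypotheses: $P_1'$ yields parameters $\mathbf{h}$ in $H$ for which the formulae of \S \ref{bisection} define an integral domain $S=A_H$ realised either as a definable subset of $H$ or (in the exceptional case) as a definable subset of $H/\mathrm{Z}(H)$, with addition coming from the ambient group operation; and $P_2'$ yields a first-order definable group isomorphism $\theta: H \to G(S) \subseteq \mathrm{M}_d(S)$. The two halves of Proposition \ref{RGprops} then transfer, because their proofs use only constructions already visible at the first-order level.

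For (i), if $H=\langle h_1,\ldots,h_m\rangle$ then $G(S)=\langle\theta(h_1),\ldots,\theta(h_m)\rangle$. Let $S_0$ be the subring of $S$ generated by the entries of the matrices $\theta(h_i)^{\pm1}$; then each $\theta(h_i)\in G(S_0)$, so $G(S)=G(S_0)$. Applying Lemma \ref{step1} inside the field of fractions $k$ of $S$ gives
\[
U_\alpha(S)=U_\alpha(k)\cap G(S)=U_\alpha(k)\cap G(S_0)=U_\alpha(S_0),
\]
and the bijectivity of $x_\alpha:S\to U_\alpha(S)$ forces $S=S_0$, whence $S$ is finitely generated.

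For (ii), I would equip $S$ with the subspace topology inherited from its definable realisation inside $H$ (or inside the Hausdorff quotient $H/\mathrm{Z}(H)$, using that $\mathrm{Z}(H)$ is closed as an intersection of centralizers). The underlying set of $S$ is defined using centralizer formulae and finite Boolean operations, hence is closed, so addition is continuous as the restricted group operation. Multiplication in $S$ is built from commutator maps and the projections $\pi_1$ of Lemma \ref{comp}; commutators in a topological group are continuous, and $\pi_1$ is continuous because each root subgroup is closed and the first-order uniqueness of the Bruhat-style decomposition encoded by $P_1'$ makes the product map $U_1(S)\times\cdots\times U_q(S)\to U_1(S)\cdots U_q(S)$ a homeomorphism. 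Finally, the properties profinite, locally compact, and t.d.l.c.\ pass to closed subspaces and to Hausdorff quotients by closed normal subgroups, so $S$ inherits the analogous property from $H$.

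The hardest step is this continuity of $\pi_1$ in the abstract topological setting: set-theoretic uniqueness of factorisation together with closedness of the factors does not in general upgrade a continuous bijection to a homeomorphism. What rescues the argument is that the Bruhat-style decomposition in $H$ must match, under $\theta$, the decomposition in $G(S)$, where the product of root subgroups is genuinely a topological direct product; by transport of structure this forces the same conclusion in $H$, and $\pi_1$ is therefore continuous.
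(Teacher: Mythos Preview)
Your proof follows essentially the same route as the paper's: for (i) you invoke $H\cong G(S)$ and repeat the argument of Proposition~\ref{RGprops}(i), and for (ii) you realise $S$ inside $H$ (or $H/\mathrm{Z}(H)$) as a closed subset via centralizer-type formulae and argue that the ring operations are continuous. The paper does the same, only more tersely.

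The gap is in your final paragraph. To claim that in $G(S)$ the product of root subgroups is ``genuinely a topological direct product'', you must first equip $G(S)$ with a topology; the only candidate is the subspace topology from $S^{d^2}$. But verifying that the factorisation map $U_1(S)\times\cdots\times U_q(S)\to U_1(S)\cdots U_q(S)$ is a homeomorphism in \emph{that} topology uses the matrix-entry formulae, which involve multiplication in $S$ --- exactly the operation whose continuity you are trying to establish. Likewise, $\theta$ itself is constructed (in \S\ref{bisection}) from conjugations, commutators, and the very projections $\pi_1$ in question, so you cannot assume $\theta$ is a homeomorphism prior to knowing that $\pi_1$ is continuous. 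The transport-of-structure rescue is therefore circular.

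You are right that this is the delicate step. The paper does not do better here: it simply asserts that ``the assumption that the mapping $\pi_1$ is well defined implies that the corresponding product of definable subgroups is actually a topological direct product'', with no further justification. So you have correctly located the subtlety; your proposal just does not resolve it, and the paper does not resolve it explicitly either.
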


\begin{proof}
(i) $P_{1}$ and $P_{2}$ ensure that $S$ is a commutative integral domain and
that $H\cong G(S)$. Now the result follows from the preceding proposition.

(ii) We have $S=U$ (or $S=UZ/Z)$ where $U$ (or $UZ)$ is defined as a double
centralizer (or similar, cf. Lemma \ref{uzdef}) in $H$ (and $Z=\mathrm{Z}(H)$
). It follows that $U$ (or $UZ$) is closed in the topology of $H$. Thus $S$
inherits a topology, which makes $(S,+)$ a topological group with the given
properties. The continuity of multiplication follows as before: the assumption
that the mapping $\pi_{1}$ is well defined implies that the corresponding
product of definable subgroups is actually a topological direct product, and
hence that $\pi_{1}$ is continuous; the other ingredients in the definition of
multiplication are clearly continuous.
\end{proof}

Now we can deduce Corollary \ref{thm:fa}, in a slightly more general form.

\begin{theorem}
\label{prth}Assume that $G$ and $R$ satisfy the hypotheses of Theorem
\ref{thm:bi-interpretation}. Let $\Sigma$ be a set of sentences of
$L_{\mathrm{rg}}$ such that $R\models\Sigma$. Then there is a set
$\widetilde{\Sigma}$ of sentences of $L_{\mathrm{gp}},$ finite if $\Sigma$ is
finite, such that $G(R)\models\widetilde{\Sigma}$ and such that

\emph{(i)} Suppose that $G(R)$ is a finitely generated group. If $R$ is the
unique f. g. ring (up to isomorphism) satisfying $\Sigma$ then $G(R)$ is the
unique f. g. group (up to isomorphism) that satisfies $\widetilde{\Sigma}.$

\emph{(ii) }If $R$ is the unique profinite, locally compact, or t.d.l.c. ring
(up to isomorphism) satisfying $\Sigma$ then $G(R)$ is the unique profinite,
locally compact, or t.d.l.c. group (up to isomorphism) that satisfies
$\widetilde{\Sigma}.$
\end{theorem}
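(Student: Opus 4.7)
The plan is to exploit the shape of the bi-interpretation set up in \S \ref{bisection} together with the translation schemes $\alpha\mapsto\alpha^{\ast}$ and $\beta\mapsto\beta^{\dag}$ already described in this section. Define
\[
\widetilde{\Sigma}\;=\;\{P_{1}^{\prime},\,P_{2}^{\prime}\}\;\cup\;\{\sigma^{\ast}\,:\,\sigma\in\Sigma\},
\]
so that $\widetilde{\Sigma}$ is finite whenever $\Sigma$ is. I would first check that $G(R)\models\widetilde{\Sigma}$: the sentences $P_{1}^{\prime}$ and $P_{2}^{\prime}$ hold in $G(R)$ by the work of \S \ref{bisection} (this is exactly what was summarised in the paragraph following the definition of $P_{3}$), and the ring $A_{G(R)}$ is isomorphic to $R$ via the interpretation, so $A_{G(R)}\models\Sigma$, which by construction of $\sigma^{\ast}$ means $G(R)\models\sigma^{\ast}$ for every $\sigma\in\Sigma$.

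Now I would take $H$ to be any group in the relevant class (f.g., profinite, locally compact, or t.d.l.c.) that satisfies $\widetilde{\Sigma}$, and aim to show $H\cong G(R)$. Since $H\models P_{1}^{\prime}$, the formulae from \S \ref{bisection} produce a commutative integral domain $S=A_{H}$, and since $H\models P_{2}^{\prime}$ the associated formula defines a group isomorphism $H\cong G(S)$. The translation $\sigma\mapsto\sigma^{\ast}$ gives $S\models\Sigma$. At this point Proposition \ref{gvr2} is invoked: in case (i) it forces $S$ to be a finitely generated ring, and in case (ii) it forces $S$ to inherit a Hausdorff topology of the appropriate type (profinite, locally compact, or t.d.l.c.). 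The hypothesis on $\Sigma$ then yields $S\cong R$ (as an abstract ring, which suffices in case (ii) in view of the remark following Corollary \ref{thm:fa}), and consequently $H\cong G(S)\cong G(R)$.

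The mildly delicate step is the interaction between topology and bi-interpretation in case (ii): one must be sure that the topology on $H$ actually induces on $S=A_{H}$ a ring topology making $S$ a member of the same class as $R$. This is precisely what Proposition \ref{gvr2}(ii) was proved for — continuity of addition is automatic because $+$ on $S$ is the restriction of the group operation, and continuity of multiplication is deduced from the fact that $P_{1}^{\prime}$ guarantees the relevant product of root groups is a topological direct product, making the projection $\pi_{1}$ continuous. Once this is in place, uniqueness of $R$ transfers to uniqueness of $G(R)$, and both parts of the theorem follow uniformly.
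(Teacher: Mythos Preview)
Your proposal is correct and follows essentially the same approach as the paper: define $\widetilde{\Sigma}=\Sigma^{\ast}\cup\{P_{1}^{\prime},P_{2}^{\prime}\}$, use $P_{1}^{\prime}$ and $P_{2}^{\prime}$ to extract $S=A_{H}$ with $H\cong G(S)$, transfer $\Sigma$ to $S$ via $\sigma\mapsto\sigma^{\ast}$, and invoke Proposition~\ref{gvr2} to place $S$ in the correct reference class. The paper's own proof is a terse one-liner pointing to Proposition~\ref{gvr2} and the preceding discussion; you have simply unpacked that discussion in full.
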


\begin{proof}
For each $\sigma\in\Sigma$ there is a formula $\sigma^{\ast}$ such that for
any group $H$ that satisfies $P_{1}^{\prime},$ we have $H\models\sigma^{\ast}$
iff $A_{H}\models\sigma$. We take $\widetilde{\Sigma}=\Sigma^{\ast}\cup
\{P_{1}^{\prime},P_{2}^{\prime}\}.$ The result now follows from Proposition
\ref{gvr2} by the preceding discussion.
\end{proof}

\textbf{Remark.} Theorem \ref{prth} has a converse, in most cases. \emph{If
}$G(R)$ \emph{is axiomatizable (or F.A.) among groups that are profinite, l.c.
or t.d.l.c. then} $R$ \emph{is similarly axiomatizable in the corresponding
class of rings}. The proof is the same, using a suitable analogue of
Proposition \ref{gvr2} (ii): in this case, it is easy to see that for a ring
$S$, the group $G(S)\subseteq\mathrm{M}_{d}(S)$ defined by the polynomial
equations (\ref{Zariski}) inherits an appropriate topology from $S$.

We are not entirely sure whether the analogue of (i) holds in all cases.
Assume that $G(R)$ is generated by its root subgroups, and \emph{either} (i)
the root system $\Phi$ is simply laced \emph{or} (ii) $\left\vert
R/2R\right\vert $ is finite and $\Phi\neq G_{2}$ \emph{or} (iii) $\left\vert
R/6R\right\vert $ is finite. Then using the idea of Lemma \ref{defmult} one
can show that if $R$ is finitely generated as a ring then $G(R)$ is a finitely
generated group. Thus we can assert: \emph{let} $R$ \emph{be a f.g. integral
domain and assume (i), (ii) or (iii). If } $G(R)$ \emph{is first-order rigid,
resp. F.A., among f.g. groups, then} $R$ \emph{has the same property among
f.g. rings}.\medskip\medskip

\subsection*{Topological vs. algebraic isomorphism}

In Theorem \ref{prth}, the phrase `up to isomorphism' refers to isomorphism as
\emph{abstract groups}. {}In part (ii), to infer that $G(R)$ is first-order
rigid, or FA, in the appropriate class of topological groups, one needs to
show that abstract isomorphism with $G(R)$ implies topological isomorphism. In
most of the cases under discussion, this is true.

A `local field' means one with a non-discrete locally compact topology, and a
\emph{locally compact} group means one that is not discrete.

\begin{proposition}
\label{CSP}\emph{(i)} Let $k$ be a local field. Then any locally compact group
abstractly isomorphic to $G(k)$ is topologically isomorphic to $G(k).$

\emph{(ii)} Let $R$ be a complete local domain with finite residue field
$\kappa$, and assume that $G$ is simply connected. Then any profinite group
abstractly isomorphic to $G(R)$ is topologically isomorphic to $G(R)$, unless
possibly $\mathrm{char}(\kappa)=2$ and $G$ is of type $B_{n}$ or $C_{n}$, or
$\mathrm{char}(\kappa)=3$ and $G$ is of type $G_{2}$.
\end{proposition}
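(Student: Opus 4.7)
The plan is to reduce both parts to automatic-continuity statements by exploiting the bi-interpretation of $G(R)$ with $R$ established in Section \ref{bisection}. In each case the strategy is to transport a definable ring-like structure through the abstract isomorphism $\phi$, verify that the transported structure is topological in the target, and invoke a uniqueness result either on the ring side (for part (i)) or on the open-subgroup lattice (for part (ii)).

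\medskip

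For part (i), let $\phi\colon G(k)\to H$ be an abstract isomorphism with $H$ locally compact. Fix a root $\alpha_{0}$ and consider the definable abelian subgroup $A_{0}:=U_{\alpha_{0}}(k)$, or its image $U_{\alpha_{0}}(k)\mathrm{Z}(G(k))/\mathrm{Z}(G(k))$ when $G$ is not adjoint. By Theorem \ref{thm:double centralizer} this subgroup is cut out by a double-centralizer formula, so $A:=\phi(A_{0})$ is closed in $H$ (respectively in $H/\mathrm{Z}(H)$). The definable ring structure of Section \ref{bisection} transports to $A$; since the defining formulas involve only group operations, commutators, and the projections $\pi_{1}$ of Lemma \ref{comp}, each continuous in $H$ (the relevant products of root subgroups being topologically direct, as in the proof of Proposition \ref{gvr2}(ii)), $A$ becomes a locally compact topological ring abstractly isomorphic to $k$, and the reconstruction map $\theta_{H}\colon H\to G(A)\subseteq A^{d^{2}}$ is a \emph{continuous} abstract group isomorphism. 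One next rules out that $A$ is discrete: otherwise $A^{d^{2}}$ would be discrete and the Pontryagin open-mapping theorem, applied to the continuous bijection $\theta_{H}$ between locally compact $\sigma$-compact groups, would force $H$ to be discrete as well. Hence $A$ is a non-discrete locally compact ring abstractly isomorphic to $k$; by the classification of locally compact non-discrete topological fields (cf.\ Weil, \emph{Basic Number Theory}, Ch.~I), its topology coincides with the canonical local-field topology. A final application of the open-mapping theorem to $\theta_{H}$ yields the homeomorphism $H\cong G(k)$.

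\medskip

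For part (ii), let $\phi\colon G(R)\to H$ be an abstract isomorphism with $H$ profinite. In every profinite group the open subgroups form a neighborhood basis of $1$, so it suffices to prove that $\phi$ preserves the property of being an open subgroup. Two ingredients make this work: (a) the strict congruence subgroup property for $G(R)$, namely that every finite-index subgroup is open; for simply connected $G$ and $R$ a complete local domain with finite residue field $\kappa$, this is known outside the listed exceptional pairs $(\text{type}, \mathrm{char}\,\kappa)$, by work of Matsumoto, Prasad--Raghunathan and later authors; and (b) the Nikolov--Segal theorem, that in a topologically finitely generated profinite group every finite-index subgroup is open. The group $G(R)=\varprojlim G(R/\mathfrak{m}^{n})$ is topologically finitely generated (lift a finite generating set of the finite group $G(\kappa)$), and abstract isomorphism preserves abstract generation, so $H$ is topologically finitely generated as well. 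Combining (a) and (b) on both sides, finite-index and open subgroups coincide in each of $G(R)$ and $H$, so $\phi$ maps open subgroups bijectively to open subgroups and is therefore a homeomorphism.

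\medskip

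The central obstacle in part (i) is simultaneously establishing non-discreteness of $A$ and continuity of the reconstruction $\theta_{H}$; both rely on the topological direct-product structure of products of root subgroups, the open-mapping theorem, and the uniqueness of the local-field topology. The central obstacle in part (ii) is the CSP itself, whose known failures in residue characteristic $2$ (types $B_{n}$, $C_{n}$) and $3$ (type $G_{2}$) account precisely for the exceptional cases in the statement.
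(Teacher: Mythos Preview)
Your argument for part~(ii) contains both an unnecessary detour and a genuine gap. The gap is the claim that ``abstract isomorphism preserves abstract generation, so $H$ is topologically finitely generated'': $G(R)$ is uncountable and is \emph{not} abstractly finitely generated, and the image under an abstract isomorphism of a topologically generating set need not be topologically generating. More to the point, the Nikolov--Segal step on the $H$ side is not needed. Once you have the congruence subgroup property on the $G(R)$ side---every finite-index subgroup of $G(R)$ is open (the paper cites Abe, \cite{A}, Thm.~1.9)---the proof is one line: for $K$ open in the profinite group $H$, the subgroup $\phi^{-1}(K)$ has finite index in $G(R)$, hence is open; thus $\phi$ is continuous, and a continuous bijection of compact Hausdorff spaces is a homeomorphism. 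The paper's alternative route (topological finite generation plus virtually pro-$p$, via \cite{LS} and \cite{DDMS}) is again applied only to $G(R)$, to re-derive ``finite index $=$ open'' there. Finally, the exceptional pairs $(\mathrm{char}\,\kappa,\Phi)$ are not ``known failures'' of CSP; they are simply the cases not covered by the cited results, which is why the statement says ``unless possibly''.

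For part~(i) your strategy is close in spirit to the paper's, but the assertion that $\theta_H\colon H\to G(A)\subseteq A^{d^2}$ is continuous is not justified by what you wrote. That the ring operations on $A$ are continuous (your reference to Proposition~\ref{gvr2}(ii) is apt) is not the same as continuity of $\theta_H$: the formula defining $\theta$ involves existential quantifiers---expressing $g$ as a product of root elements of bounded length---and such a decomposition is not a continuous function of $g$ across Bruhat cells. The paper sidesteps this by arguing uniqueness of topology rather than continuity of a specific map: the Bruhat decomposition of $G(k)$ is algebraically determined (via Corollary~\ref{defibl}), so any group topology on $G(k)$ is determined by its restriction to the root subgroups; these, with the definable ring structure of Lemma~\ref{defmult}, become locally compact topological fields abstractly isomorphic to $k$; and the classification of local fields (\cite{W}, Ch.~I) then forces the topology on each root subgroup, hence on $G(k)$, to be the standard one.
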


\begin{proof}
(i) This is equivalent to the claim that $G(k)$ is determined up to
topological isomorphism by its algebraic structure.

The Bruhat decomposition of $G(k)$ is algebraically determined (e.g. by the
proof of Corollary \ref{defibl}), and it expresses $G(k)$ as a finite union of
products of copies of $k$ (the root subgroups) and of $k^{\ast}$ (the torus).
It follows that any topology on $G$ is determined by its restriction to the
root sugroups, identified with $k$. It follows from Lemma \ref{defmult} that
the algebraic stucture of $k$ is determined by that of $G$. Now a local field
that is algebraically isomorphic to $k$ is topologically isomorphic to $k$:
this is clear from the classification of local fields, see e.g. \cite{W},
Chapter 1.

In many cases a stronger result holds: \emph{every isomorphism with }$G(k)$
\emph{is continuous. }This holds when $k\neq\mathbb{C}$ (it may be deduced
from \cite{St}, Lemma 77; cf. \cite{BT}, \S 9), but obviously not for
$k=\mathbb{C}$.

(ii) This follows from the \emph{congruence subgroup property}: if $K$ is a
normal subgroup of finite index in $G(R)$ then $K$ contains the congruence
subgroup $\ker(G(R)\rightarrow G(R/I))$ for some ideal $I$ of finite index in
$R,$ see \cite{A}, Theorem 1.9. Thus every subgroup of finite index in $G(R)$
is open. Hence if $f:G(R)\rightarrow H$ is an isomorphism, where $H$ is a
profinite group, then $f^{-1}(K)$ is open in $G(R)$ for every open subgroup
$K$ of $H$, so $f$ is continuous; and a continuous isomorphism between
profinite groups is a homeomorphism.

Alternatively, it follows from \cite{LS}, Cor. 3.4 that $G(R)$ is finitely
generated as a profinite group provided the Lie algebra over $\kappa$
associated to $\Phi$ is perfect. As $G(R)$ in this case is virtually a pro-$p$
group, this in turn implies that every subgroup of finite index is open
(\cite{DDMS}, Theorem 1.17).
\end{proof}

\section{Applications}

\label{sec:applications}As before, $G$ is a simple Chevalley-Demazure group
scheme defined by a root system $\Phi$ of rank at least $2$ and $R$ is a
commutative integral domain.

The group $G(R)$ has finite elementary width in the following cases: 

\begin{enumerate}
\item When $R$ is a field, by the Bruhat decomposition (\cite{C}, Thm. 8.4.3,
\ \cite{St}, Cor. 1 on p. 21).

\item When $R$ is a local ring and $G$ is simply connected, by a theorem of
Abe, \cite{A} Proposition 1.6, together with \cite{HSVZ}, Corollary 1.

\item When\emph{ }$R$ is a Dedekind domain of arithmetic type and $G$ is
simply connected, by a theorem of Tavgen, \cite{T} \ Theorem A.
\end{enumerate}

To apply Theorem \ref{prth}, we need to pick out from this list those
rings that are also FA. Now \cite{AKNZ}, Proposition 7.1 says that \emph{every
f.g. commutative ring }is FA in the class of f.g. rings; it is shown in
\cite{NS2}, Theorem 4.4 that \emph{every regular, unramified complete local
ring with finite residue field is FA }in the class of profinite rings. (These
rings are $\mathbb{F}_{q}[[t_{1},\ldots,t_{n}]]$,$~\mathfrak{o}_{q}%
[[t_{1},\ldots,t_{n}]]$ , $n\geq0,$ where $\mathfrak{o}_{q}=\mathbb{Z}%
_{p}[\zeta],$ $q=p^{f}$, $\zeta$ a primitive $(q-1)$th root of unity).

It is also the case that every locally compact field is FA in the class of all
locally compact rings. We are grateful to Matthias Aschenbrenner for supplying
the proof of Proposition \ref{local} sketched below.

Thus we may deduce -- invoking Proposition \ref{RGprops} (i) for part (i) --

\begin{corollary}
\label{corfinal}\emph{(i) }If\emph{ }$G(R)$ is finitely generated and $G$ is
adjoint then $G(R)$ is FA among f.g. groups (assuming that the units condition holds).

\emph{(ii) If }$S$ is a finite set of primes in an algebraic number field and
$R$ is the ring of $S$-integers then the $S$-arithmetic group $G(R)$ is FA
among f.g. groups, assuming that $G$ is simply connected.

\emph{(iii) }The profinite groups $G(R)$, $R=\mathbb{F}_{q}[[t_{1}%
,\ldots,t_{n}]]$ or $R=\mathfrak{o}_{q}[[t_{1},\ldots,t_{n}]],$ $n\geq0$, are
FA among profinite groups, if $G$ is adjoint or simply connected.

\emph{(iv) If }$k$ is a local field then $G(k)$ is FA among locally compact groups.
\end{corollary}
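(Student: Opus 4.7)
The plan is to verify, in each of the four cases, that the hypotheses of Theorem \ref{prth} are met, then to supply a suitable finite set $\Sigma$ of ring sentences that axiomatizes $R$ in the relevant class, and finally to read off the corresponding $\widetilde{\Sigma}$ for $G(R)$. Thus in every case the argument reduces to two inputs: (a) the bi-interpretability of $R$ and $G(R)$ (i.e., the hypotheses of Theorem~\ref{thm:bi-interpretation} in one of its two forms), and (b) an existing FA result for the ring $R$ in the appropriate class of rings.

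For (i), since $G$ is adjoint the units condition is the only extra assumption needed, and Theorem~\ref{thm:bi-interpretation} applies directly. By Proposition~\ref{RGprops}(i), $R$ is a finitely generated ring, so by \cite{AKNZ}, Proposition~7.1 it is FA in the class of f.g. rings; Theorem~\ref{prth}(i) then gives the result. For (ii), $G$ is simply connected, so by Tavgen's theorem $G(R)$ has finite elementary width; thus the alternative form of Theorem~\ref{thm:bi-interpretation} applies. The ring of $S$-integers is f.g., hence FA in f.g. rings by \cite{AKNZ}, and $G(\mathfrak o_S)$ is f.g. by \cite{BS}, so Theorem~\ref{prth}(i) again applies.

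For (iii), if $G$ is simply connected then $G(R)$ has finite elementary width by Abe's theorem together with \cite{HSVZ}, Corollary~1; if $G$ is adjoint, no elementary-width hypothesis is needed. In either case the hypotheses of Theorem~\ref{thm:bi-interpretation} hold. The ring $R$ is regular, unramified, complete local with finite residue field, so by \cite{NS2}, Theorem~4.4 it is FA in the class of profinite rings. Since $G(R) \subseteq \mathrm{M}_d(R)$ inherits a profinite topology from $R$, Theorem~\ref{prth}(ii) yields the claim. For (iv), $R = k$ is a field, so $G(k)$ has finite elementary width by the Bruhat decomposition and the alternative form of Theorem~\ref{thm:bi-interpretation} applies; local fields are FA in locally compact rings by Proposition~\ref{local}, and $G(k) \subseteq \mathrm{M}_d(k)$ carries a locally compact topology, so Theorem~\ref{prth}(ii) applies.

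The main obstacle in each part is checking that the formal hypotheses of Theorem~\ref{prth} and of Proposition~\ref{RGprops} (respectively Proposition~\ref{gvr2}) are indeed in force: one must either verify adjointness or cite the correct finite-elementary-width theorem, and in parts (iii)--(iv) one must confirm that the natural topology on $G(R)$ (as a matrix group over $R$) is profinite, respectively locally compact and non-discrete, so that the topological axiomatizability results apply in the shape stated. These verifications are routine once the right references are in hand.
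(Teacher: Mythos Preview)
Your proposal is correct and follows essentially the same approach as the paper: in each case you verify the hypotheses of Theorem~\ref{thm:bi-interpretation} (via adjointness or the relevant finite-elementary-width result of Tavgen, Abe--HSVZ, or the Bruhat decomposition), invoke the appropriate ring-theoretic FA result (\cite{AKNZ}, \cite{NS2}, or Proposition~\ref{local}), and apply Theorem~\ref{prth}. The only additions you make beyond the paper's own argument are the explicit remarks that $G(R)$ inherits the required topology in parts (iii)--(iv) and the citation of \cite{BS} for finite generation in part (ii), both of which are appropriate.
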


\begin{proposition}
\label{local}\emph{(M. Aschenbrenner) }Let $k$ be a locally compact field.
Then $k$ is determined up to isomorphism within the class of locally compact
rings by finitely many first-order sentences.
\end{proposition}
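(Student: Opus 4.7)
The plan is to use the classification of (non-discrete) locally compact fields: by a classical theorem of Pontryagin and Jacobson, $k$ is topologically isomorphic to exactly one of $\mathbb{R}$, $\mathbb{C}$, a finite extension of $\mathbb{Q}_{p}$, or a finite extension of $\mathbb{F}_{p}((t))$. For each of these I would produce a finite set $\Sigma$ of first-order sentences in $L_{\mathrm{rg}}$ true of $k$ and such that any non-discrete locally compact ring satisfying $\Sigma$ is isomorphic as a ring to $k$. In every case $\Sigma$ includes ``$k$ is a field'', excluding at once any non-integral-domain locally compact ring.

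The archimedean cases are handled directly. For $k=\mathbb{C}$ take as $\Sigma$ ``$k$ is a field and every element is a square''. A non-archimedean locally compact field $E$ has $|E^{\ast}/(E^{\ast})^{2}|\geq 4$ when $\mathrm{char}(E)\neq 2$, and in characteristic $2$ the field $E$ is imperfect so that squaring is not surjective on $E$; no such $E$ qualifies, and $\mathbb{R}$ fails trivially. For $k=\mathbb{R}$ take $\Sigma$ to be ``$k$ is a field, $-1$ is not a square, and every element or its negative is a square''. The second axiom excludes $\mathbb{C}$. The third excludes every non-archimedean locally compact $E$: when $\mathrm{char}(E)\neq 2$ the group $E^{\ast}/(E^{\ast})^{2}$ has order at least $4$ so some coset is neither $(E^{\ast})^{2}$ nor $-(E^{\ast})^{2}$; when $\mathrm{char}(E)=2$ the two cosets coincide and squaring on $E$ is still not surjective.

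For the non-archimedean cases, the classification reduces matters to pinning down the triple $(p,f,e)$ of residue characteristic, residue degree and absolute ramification index: any non-discrete locally compact non-archimedean field is determined up to isomorphism by $(p,f,e)$. The key input is the well-known uniform definability of the valuation ring $\mathcal{O}$ of $k$ in $k$ by a single first-order formula in $L_{\mathrm{rg}}$ (results going back to Ax, with refinements by Macintyre and by Prestel--Roquette; the precise formula depends on $\mathrm{char}(k)$ and on $p$ but is fixed for each $k$). Once $\mathcal{O}$ is definable, so are the maximal ideal $\mathfrak{m}$ and the residue field $\mathcal{O}/\mathfrak{m}$, and the invariants of $k$ can be encoded by finitely many sentences: that the residue field has exactly $q=p^{f}$ elements; that the characteristic is $p$ (in equal characteristic) or that $p\in\mathfrak{m}\setminus\mathfrak{m}^{e+1}$ (in mixed characteristic). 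Together with ``$k$ is a field'' this finite $\Sigma$ singles out $k$ within the class of non-discrete locally compact rings.

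The main obstacle is that the full first-order theory of a $p$-adic field is not finitely axiomatizable among rings---Henselianity, for instance, is an axiom scheme---so the argument must exploit the class restriction carefully. Within non-discrete locally compact rings one does not need the full theory, only enough formulae to force a ring satisfying $\Sigma$ to be a field and to exhibit the invariants $(p,f,e)$ of $k$; the classification then closes the gap. A secondary point worth care is that for the function-field case $k=\mathbb{F}_{p}((t))$ the definability results for the valuation ring are more delicate than in mixed characteristic, but are nevertheless available and give a single formula in each case.
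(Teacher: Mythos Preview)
Your overall strategy---classify the locally compact fields and treat each type separately, using definability of the valuation ring in the non-archimedean cases---matches the paper's proof. The archimedean cases and the positive-characteristic case are fine and essentially the same as the paper's.

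There is, however, a genuine gap in the mixed-characteristic case. You assert that ``any non-discrete locally compact non-archimedean field is determined up to isomorphism by $(p,f,e)$'', and this is false. For a concrete example, $\mathbb{Q}_{3}(\sqrt{3})$ and $\mathbb{Q}_{3}(\sqrt{-3})$ are both totally ramified quadratic extensions of $\mathbb{Q}_{3}$ with $(p,f,e)=(3,1,2)$, yet they are not isomorphic (since $-1$ is not a square in $\mathbb{Q}_{3}$). More dramatically, over $\mathbb{Q}_{2}$ there are six pairwise non-isomorphic ramified quadratic extensions, all with $(p,f,e)=(2,1,2)$. In general the number of extensions of $\mathbb{Q}_{p}$ of given degree is finite but typically greater than one for each $(e,f)$.

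The paper repairs exactly this point. After using Ax's formula to pin down $e$ and $f$ (hence the degree $n=ef$ over $\mathbb{Q}_{p}$), it invokes Krasner's Lemma: choose a primitive element for $k/\mathbb{Q}_{p}$ with minimal polynomial $h$, and approximate $h$ by a polynomial $g\in\mathbb{Q}[t]$ of degree $n$ whose coefficients are close enough to those of $h$ that any root of $g$ still generates $k$. One then adds the first-order sentence ``$g$ has a root''. Any locally compact field $E$ of characteristic $0$ with residue characteristic $p$ and $[E:\mathbb{Q}_{p}]=ef$ in which $g$ has a root must contain (hence equal) $k$. This extra axiom is the missing ingredient in your argument; without it the sentences you list do not single out $k$ among its finitely many ``twins'' of the same degree.
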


\begin{proof}
The first axiom asserts that $k$ is a field. Now we consider the cases.

\textbf{1}. If $k=\mathbb{R}$, then $k$ is axiomatized by saying that $k$ is
Euclidean, that is, \emph{(a)} $-1$ is not of the form $x^{2}+y^{2}$ and
\emph{(b)} for every $x\in k$ either $x$ or $-x$ is a square. (This implies
that $k$ is an ordered field for a (unique) ordering whose set of nonnegative
elements is given by the squares; and no other local field is orderable.)

\textbf{2}. If $k=\mathbb{C}$, then $k$ is axiomatized by saying that every
element is a square.

\textbf{3.} \ Let $k=\mathbb{F}_{q}((t))$ where $q$ is a power of a prime $p.$
Ax provides in \cite{Ax} a formula $\phi_{p}$ that defines the valuation ring
in any henselian discretely valued field of residue characteristic $p$. We can
then make a sentence which expresses that the characteristic of the field is
$p$ and the residue field of the valuation ring defined by $\phi_{p}$ has size
$q$. This sentences determines $k$ up to isomorphism among all local fields.

\textbf{4.} The remaining case is where $k$ is a finite extension of
$\mathbb{Q}_{p}$. Then we use Ax's formula $\phi_{p}$ again to express that
the ramification index and residue degree of $k$ have given values $e$ and
$f$. Then $(k:\mathbb{Q}_{p})=ef$. Let $h$ be the minimal polynomial of a
primitive element for $k$ over $\mathbb{Q}_{p},$ and let $g\in\mathbb{Q}[t]$,
of degree $ef=\deg(h),$ have coefficients sufficiently close to those of $h$
that Krasner's Lemma applies, i.e. $g$ has a zero $\beta\in k$ and
$k=\mathbb{Q}_{p}(\beta)$. Then $k$ is determined among local fields by:
$p\neq0$; the formula $\phi_{p}$ defines in $k$ a valuation ring with residue
field of characteristic $p$, ramification index $e$, and residue degree $f$;
and the polynomial $g$ has a zero in $k$.
\end{proof}

\section{Torus witnesses in some exceptional groups\label{torus2}}

Returning to the proof of Lemma \ref{mainlemma}, begun in \S \ref{dcanddefrt},
we now establish the existence of the required torus witnesses for some
exceptional groups, under the blanket assumption that $R^{\ast}\neq\{1\}$. A
similar approach works for the other groups as well, but different methods
will enable us in \S \S \ref{building} and \ref{class}\ to dispense with any
conditions on $R^{\ast}$.

We begin with the following basic observation:

\begin{lemma}
\label{lem:gamma witness} Suppose that $\Phi$ is a root system of rank at
least $2$ and $r\in R^{\ast}\setminus\{1\}$. If $\alpha,\beta\in\Phi$ and
$\gamma$ is orthogonal to $\alpha$ and non-orthogonal to $\beta$, then
$s_{\alpha,\beta}=h_{\gamma}(r)$ is a torus witness for $(\alpha,\beta)$
\textbf{unless} $A_{\gamma\beta}=\pm2$ and $r=-1$ or $A_{\gamma\beta}=\pm3$
and $r^{3}=1$.
\end{lemma}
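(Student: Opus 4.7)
The plan is to directly verify the two defining properties of a torus witness by using the explicit action formula for elementary torus elements on root subgroups, namely
\[
x_{\delta}(s)^{h_{\gamma}(t)} = x_{\delta}\bigl(t^{-A_{\gamma\delta}} s\bigr),
\]
which is recalled in the passage just before Proposition \ref{prop:torus witness with units}.

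First I would handle the centralization condition. Since $\gamma$ is orthogonal to $\alpha$ we have $A_{\gamma\alpha} = 0$, so the formula gives $x_{\alpha}(s)^{h_{\gamma}(r)} = x_{\alpha}(s)$ for every $s \in R$. Hence $h_{\gamma}(r) \in \mathrm{C}_{T(R)}(U_{\alpha})$, as required.

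Next I would analyse effectiveness on $U_{\beta}$. A nontrivial element $x_{\beta}(s) \in U_{\beta}$ is fixed by $h_{\gamma}(r)$ precisely when $(r^{-A_{\gamma\beta}} - 1)s = 0$; because $R$ is an integral domain this happens for some $s \neq 0$ iff $r^{A_{\gamma\beta}} = 1$. So the torus witness condition $\mathrm{C}_{U_{\beta}}(h_{\gamma}(r)) = 1$ reduces to the arithmetic requirement $r^{A_{\gamma\beta}} \neq 1$.

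Finally I would enumerate the four possibilities for the nonzero Cartan integer $A_{\gamma\beta} \in \{\pm 1, \pm 2, \pm 3\}$. When $A_{\gamma\beta} = \pm 1$, the condition $r^{\pm 1} \neq 1$ follows immediately from the hypothesis $r \neq 1$. When $A_{\gamma\beta} = \pm 2$, the condition $r^{\pm 2} \neq 1$ fails exactly when $r = \pm 1$, and combined with $r \neq 1$ this isolates $r = -1$ as the only obstruction. When $A_{\gamma\beta} = \pm 3$, the condition fails exactly when $r^3 = 1$. These are precisely the excluded cases in the statement, so $s_{\alpha,\beta} = h_{\gamma}(r)$ is a torus witness in every remaining situation. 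There is no real obstacle here; the whole argument is a direct application of the action formula followed by a short case analysis on the values of $A_{\gamma\beta}$.
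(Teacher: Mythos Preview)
Your proof is correct and follows exactly the argument the paper has in mind: the lemma is stated there as a ``basic observation'' without proof, precisely because it is an immediate consequence of the action formula $x_{\delta}(s)^{h_{\gamma}(t)} = x_{\delta}(t^{-A_{\gamma\delta}} s)$ recalled earlier. Your verification of the centralization condition via $A_{\gamma\alpha}=0$ and the effectiveness condition via $r^{A_{\gamma\beta}}\neq 1$, followed by the case split on $|A_{\gamma\beta}|\in\{1,2,3\}$, is the intended reasoning.
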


Note also that if $\mathrm{char}(R)\neq2$, $\alpha,\beta$ are non-orthogonal
and $A_{\alpha\beta}\neq2$, then $s_{\alpha,\beta}=h_{\alpha}(-1)$ is a torus witness.


\begin{lemma}
\label{lem:gamma} Let $\Phi\in\{ E_{6}, E_{7}, E_{8}, F_{4}\}$ and suppose
$\alpha, \beta\in\Phi$ are orthogonal. Then there is a root $\gamma$
orthogonal to $\alpha$ and non-orthogonal to $\beta$ unless $\Phi=F_{4}$, and
$\alpha, \beta$ are both long.
\end{lemma}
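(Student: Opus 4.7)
The plan is to exploit Weyl group transitivity on orthogonal ordered pairs of roots (of prescribed length combinations) to reduce the problem to a small list of representatives in a standard coordinate realization, and then exhibit $\gamma$ case by case. Throughout, the natural reading is that we want $\gamma$ non-proportional to $\beta$ (so that $A_{\gamma\beta} = \pm 1$ in simply-laced types, which in view of Lemma \ref{lem:gamma witness} is what the application in Lemma \ref{mainlemma} requires).

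For the simply-laced types, I would argue uniformly: if $\alpha \in \Phi \in \{E_6, E_7, E_8\}$, the subsystem $\Phi_\alpha = \{\delta \in \Phi : \delta \perp \alpha\}$ is an irreducible root system of type $A_5$, $D_6$, $E_7$ respectively. These identifications are standard. Since $\beta \perp \alpha$ we have $\beta \in \Phi_\alpha$, and since $\Phi_\alpha$ is irreducible of rank $\geq 5$, the root $\beta$ has neighbors in $\Phi_\alpha$ — explicitly, choosing any simple system for $\Phi_\alpha$ containing $\beta$, an adjacent simple root $\gamma$ satisfies $\gamma \in \Phi_\alpha$, $\gamma \neq \pm\beta$ and $(\gamma,\beta) \neq 0$.

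For $F_4$ I would work in the standard model with long roots $\pm e_i \pm e_j$ ($i < j$) and short roots $\pm e_i$ and $\tfrac{1}{2}(\pm e_1 \pm e_2 \pm e_3 \pm e_4)$. Using Weyl-transitivity on orthogonal pairs of given lengths I reduce to one representative in each of the four length-combinations:
\begin{itemize}
\item \emph{(short, short):} $\alpha = e_1$, $\beta = e_2$; take $\gamma = e_2 + e_3$.
\item \emph{(long, short):} $\alpha = e_1 - e_2$, $\beta = e_3$; take $\gamma = e_3 + e_4$.
\item \emph{(short, long):} $\alpha = e_1$, $\beta = e_2 + e_3$; take $\gamma = e_2 + e_4$.
\item \emph{(long, long):} the exceptional case, handled below.
\end{itemize}
In each of the first three, one checks directly that $\gamma \perp \alpha$, $\gamma \neq \pm\beta$, and $A_{\gamma\beta} = \pm 1$.

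The main obstacle is the exceptional (long, long) case, which requires a complete enumeration rather than an explicit construction. Taking $\alpha = e_1 - e_2$ and $\beta = e_1 + e_2$, I would list the $18$ roots of $\Phi_\alpha$: the six long roots $\pm(e_1+e_2), \pm e_3 \pm e_4$, and the twelve short roots $\pm e_3, \pm e_4, \tfrac{1}{2}(\epsilon, \epsilon, \epsilon_3, \epsilon_4)$ with $\epsilon = \pm 1$ and $\epsilon_3, \epsilon_4 \in \{\pm 1\}$. A one-line inner product check shows that the only $\gamma \in \Phi_\alpha$ with $(\gamma, \beta) \neq 0$ are $\pm\beta$ and the eight short roots of the form $\tfrac{1}{2}(\epsilon, \epsilon, \epsilon_3, \epsilon_4)$, all of which give $A_{\gamma\beta} = \pm 2$ (and no non-proportional long $\gamma$ works). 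This confirms that $F_4$ with $\alpha, \beta$ both long is a genuine exception, and completes the proof.
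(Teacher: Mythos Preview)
Your proof is correct and takes a genuinely different route from the paper's. The paper fixes a specific fundamental system, places $\alpha$ at an end node, and then does a hands-on case analysis on the expression of $\beta$ in terms of simple roots, computing explicit candidates for $\gamma$ in each case. You instead exploit structural facts: for $E_n$ you use that the orthogonal subsystem $\Phi_\alpha$ is irreducible (of type $A_5$, $D_6$, $E_7$), so any $\beta\in\Phi_\alpha$ has a neighbour there; for $F_4$ you reduce via Weyl-transitivity on orthogonal pairs of prescribed length type to four representatives in the standard coordinate model and dispatch each by inspection. Both approaches are valid; yours is cleaner and avoids the somewhat delicate bookkeeping in the paper's $E_n$ case (where the explicit ``otherwise'' root $\beta$ is written down with fractional coefficients that need care). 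Your approach also makes transparent why the argument works: it is really a statement about the connectedness and rank of $\Phi_\alpha$.

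Two remarks. First, you are right to flag that the lemma as literally stated is trivial (take $\gamma=\beta$), and that the operative content is the existence of $\gamma\not=\pm\beta$ with $A_{\gamma\beta}$ odd; the paper leaves this implicit. Second, your verification that the $F_4$ (long, long) case is a genuine exception --- listing $\Phi_\alpha$ and checking that every $\gamma\in\Phi_\alpha$ non-orthogonal to $\beta$ has $A_{\gamma\beta}=\pm 2$ --- goes beyond what the paper does (the paper simply omits that case from its proof) and is a useful addition, since it explains why the exception in Proposition~\ref{prop:torus witness} cannot be removed by this method when $R^\ast=\{\pm 1\}$. The Weyl-transitivity you invoke for $F_4$ is justified because $\Phi_\alpha$ is of type $B_3$ (for $\alpha$ short) or $C_3$ (for $\alpha$ long), and $W(B_3)=W(C_3)$ acts transitively on roots of each length.
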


\begin{proof}
\textbf{$\Phi= E_{n}, n= 6, 7, 8$.} Let $a_{1},\ldots, a_{n}, n\in\{6, 7, 8\}$
be a set of fundamental roots where $a_{n-3}$ is the branching point. We may
assume that $\alpha=a_{1}$. If $\beta$ does not involve $a_{2}$, we can choose
$\gamma$ as a root in the subdiagram spanned by $a_{3},\ldots, a_{n}$ and
non-orthogonal to $\beta$.

Now suppose $\beta$ is a positive root involving $a_{2}$ and orthogonal to
$\alpha$. If there is a fundamental root $a_{i}, 3\leq i\leq n,$ which is
non-orthogonal to $\beta$, put $\gamma=a_{i}$. Otherwise an easy calculation
(starting from $a_{n}$) shows that for $n=7,8$ we have:
\[
\beta=\epsilon_{n}(a_{1}+2a_{2} + 2a_{n-5} + \frac{5}{2}a_{n-4}+ \frac{3}%
{2}a_{n-2} +3a_{n-3}+2a_{n-1}+a_{n})
\]

whereas for $n=6$ we must have%

\[
\beta=\epsilon_{6}(\frac{5}{4} a_{1}+\frac{5}{2} a_{2} + \frac{3}{2}a_{4}
+3a_{3}+2a_{5}+a_{6}).
\]

In either case, $\beta$ is non-orthogonal to $a_{2}$. For $n=6, 7,8$ let
\[
\gamma=a_{1}+2(a_{2}+\ldots+ a_{n-3})+a_{n-2}+a_{n-1}.
\]
Then $\gamma$ is orthogonal to $\alpha$, but not to $\beta$.

\medskip\noindent\textbf{$\Phi= F_{4}$.} Let $a_{1},\ldots, a_{4}$ be a set of
fundamental roots where $a_{1}$ is long and $a_{4}$ is short. First assume
that $\alpha=a_{1}$. If $\beta$ is short and orthogonal to $\alpha$ then
either it is contained in the subdiagram spanned by $a_{3}, a_{4}$ and we
choose $\gamma$ in this $A_{2}$-subdiagram non-orthogonal to $\beta$. Or else
we have $\beta\in\{a_{1}+2a_{2}+2a_{3}+a_{4}, a_{1}+2a_{2}+3a_{3}+a_{4},
a_{1}+2a_{2}+3a_{3}+2a_{4}\}$ and $\gamma=a_{4}$ or $\gamma=a_{3}+a_{4}$ is as required.

Next assume $\alpha=a_{4}$ is short. If $\beta$ is a positive root orthogonal
to $\alpha$ and contained in the subdiagram spanned by $a_{1}, a_{2}$, then we
find $\gamma$ as before. Otherwise we have $\beta\in\{a_{2}+2a_{3}+a_{4},
a_{1}+a_{2}+2a_{3}+a_{4}, a_{1}+2a_{2}+2a_{3}+a_{4}\}$ and $\gamma=a_{1}$ or
$\gamma=a_{1}+a_{2}$ is as required.
\end{proof}

\begin{lemma}
\label{lem:gamma nonorthogonal} Let $\Phi\in\{E_{6},E_{7},E_{8},F_{4}\}$ and
suppose $\alpha,\beta\in\Phi$ are non-orthogonal and $R^{\ast}\neq\{1\}$. Then
there is a torus witness for $(\alpha,\beta)$.
\end{lemma}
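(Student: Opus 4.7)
My approach is to apply Lemma~\ref{lem:gamma witness}: produce $\gamma \in \Phi$ orthogonal to $\alpha$ and non-orthogonal to $\beta$ so that $h_\gamma(r)$ is a torus witness for some $r\in R^{\ast}\setminus\{1\}$, checking that the exceptional cases of that lemma are avoided. Observe at the outset that the ``$A_{\gamma\beta}=\pm 3$, $r^{3}=1$'' clause never applies, since no Cartan integer in $E_6,E_7,E_8$ or $F_4$ reaches $\pm 3$.

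For the simply-laced cases $\Phi\in\{E_{6},E_{7},E_{8}\}$, for any $\gamma\neq\pm\beta$ one has $A_{\gamma\beta}\in\{0,\pm 1\}$, so the ``$A_{\gamma\beta}=\pm 2$, $r=-1$'' clause does not apply either, and \emph{any} $r\in R^{\ast}\setminus\{1\}$ will do. Thus the task reduces to exhibiting $\gamma\perp\alpha$ with $\gamma\not\perp\beta$; the hypothesis $\alpha\not\perp\beta$ automatically forces $\gamma\neq\pm\beta$. This is a purely combinatorial claim: I would use the transitivity of the Weyl group on roots to reduce to $\alpha=a_{1}$ for a fixed choice of fundamental roots, then walk through the positive roots $\beta$ non-orthogonal to $a_{1}$ and produce $\gamma$ explicitly in each subcase, in the same style as the proof of Lemma~\ref{lem:gamma}.

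For $\Phi=F_{4}$ the same argument works immediately when $\beta$ is short, since then $A_{\gamma\beta}\in\{0,\pm 1\}$ for every $\gamma\neq\pm\beta$. When $\beta$ is long, the strategy is to look for a \emph{long} $\gamma\perp\alpha$ with $\gamma\not\perp\beta$, which again gives $A_{\gamma\beta}=\pm 1$; a direct inspection of the standard realization of $F_{4}$ (distinguishing the subcases $\alpha$ long and $\alpha$ short, and in the latter splitting further according to whether $\alpha$ is of the form $\pm e_i$ or $\frac{1}{2}(\pm e_1\pm\cdots\pm e_4)$) shows that such a long $\gamma$ is available in every non-orthogonal pair $(\alpha,\beta)$. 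Failing that, any short $\gamma$ with $A_{\gamma\beta}=\pm 2$ gives a witness for any $r\in R^{\ast}\setminus\{\pm 1\}$; and in the residual situation $R^{\ast}=\{\pm 1\}$ (forcing $\mathrm{char}(R)\neq 2$), the remark after Lemma~\ref{lem:gamma witness} supplies $h_\alpha(-1)$ as a torus witness whenever $A_{\alpha\beta}\neq\pm 2$. The case $A_{\alpha\beta}=\pm 2$ only arises with $\alpha$ short and $\beta$ long, and is exactly the configuration treated by the long-$\gamma$ step.

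The main obstacle is the $F_{4}$ bookkeeping in the residual subcase ($\alpha$ short, $\beta$ long, $R^{\ast}=\{\pm 1\}$): one must verify by explicit root-system computation that a long $\gamma\perp\alpha$ non-orthogonal to $\beta$ is always present, so that the sole configuration in which $h_\alpha(-1)$ fails is already covered by the default witness $h_\gamma(-1)$. Once this enumeration is carried out, the three strategies (long $\gamma$ with $A_{\gamma\beta}=\pm 1$; short $\gamma$ combined with a unit $r\neq\pm 1$; $h_\alpha(-1)$ via the remark) exhaust all configurations consistent with $R^{\ast}\neq\{1\}$.
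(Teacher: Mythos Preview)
Your strategy is sound and would go through, but it differs from the paper's proof in an essential organizational way. The paper does not enumerate non-orthogonal pairs $(\alpha,\beta)$ via Weyl-group transitivity and the Euclidean realization; instead it first replaces $\beta$ by $-\beta$ if necessary so that $\{\alpha,\beta\}$ is a base of the rank-$2$ subsystem it spans, and then invokes \cite{AL}, Theorem~7 to extend $\{\alpha,\beta\}$ to a full system of fundamental roots for $\Phi$. This turns the problem into a Dynkin-diagram question: one looks for a neighbour $\gamma$ of $\beta$ in the diagram, distinct from $\alpha$, with $A_{\gamma\beta}\neq\pm 2$. When such a fundamental $\gamma$ exists (which is most positions of $(\alpha,\beta)$ on the diagram), $h_\gamma(r)$ works immediately; only a handful of residual positions require an ad hoc $\gamma$ (not fundamental) or the fallback $h_\alpha(-1)$.

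Your route has the advantage that, by allowing $\gamma$ to range over all roots from the start, you can often find a \emph{long} $\gamma\perp\alpha$ with $\gamma\not\perp\beta$ and thereby avoid the characteristic split the paper performs in the $F_4$ case $\alpha=a_1,\beta=a_2$. On the other hand, the paper's extension-to-fundamental-roots device collapses the entire $E_n$ case to a single configuration inside an $A_3$ subdiagram, whereas your plan (``walk through the positive roots $\beta$ non-orthogonal to $a_1$'') would demand a longer enumeration. Both arguments are correct; the paper's is shorter because of the \cite{AL} trick, which you may want to note as an alternative to the direct case analysis you outline.
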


\begin{proof}
If $\alpha,\beta$ are non-orthogonal, then (replacing $\beta$ by $-\beta$ if
necessary) we may assume that they form a basis for the rank $2$ subdiagram
spanned by $\alpha$ and $\beta$. By~\cite{AL} [Thm. 7] $\alpha,\beta$ can be
extended to a system of fundamental roots for $\Phi$. If in the associated
diagram there is a neighbour $\gamma$ of $\beta$ with $A_{\gamma\beta}\neq
\pm2$ and $\gamma$ is not a neighbour of $\alpha$, then $h_{\gamma}(r), r\in
R^{*}\setminus\{1\},$ is as required. We now deal with the remaining
situations separately either by finding a suitable $\gamma$ or by giving the
witness directly.

\medskip\noindent\textbf{$\Phi=E_{n},n=6,7,8$.} Since any pair of adjacent
fundmental roots is contained in an $A_{3}$ subdiagram, we may assume that
$\alpha=a_{2},\beta=a_{1}$ so $\gamma=a_{1}+a_{2}+a_{3}$ is as required.

\medskip\noindent\textbf{$\Phi=F_{4}$.} Let $a_{1},\ldots,a_{4}$ be the
resulting fundamental system where $a_{1}$ is long, $a_{n}$ is short. First
assume $\alpha=a_{2},\beta=a_{1}$, so $\gamma=a_{2}+2a_{3}$ is as required. If
$\alpha=a_{3},\beta=a_{4}$, then $\gamma=a_{2}+a_{3}$ is as required. If
$\alpha=a_{1},\beta=a_{2}$, and $\mathrm{char}(R)\neq2$, then $h_{\alpha}(-1)$
is as required. If $char(R)=2$, then $h_{a_{3}}(r)$ for $r\in R^{\ast
}\setminus\{1\}$ works.
\end{proof}

We can now summarize the existence of torus witnesses as follows:

\begin{proposition}
\label{prop:torus witness} Suppose $\Phi\in\{E_{6},E_{7},E_{8},F_{4}\}$ and
let $\alpha,\beta\in\Phi$ be linearly independent. Then there is a torus
witness for $(\alpha,\beta)$ \textbf{except} possibly if $R^{\ast}=\{\pm1\}$,
$\Phi=F_{4},$ and $\alpha,\beta$ are orthogonal and both long.
\end{proposition}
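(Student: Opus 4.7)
The proposition is a synthesis of the three preceding lemmas. For linearly independent non-orthogonal $(\alpha,\beta)$, Lemma~\ref{lem:gamma nonorthogonal} already supplies a torus witness under the standing hypothesis $R^{*}\neq\{1\}$, so I would dispose of this case immediately by quoting it.

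For the orthogonal case, the plan is to invoke Lemma~\ref{lem:gamma} to produce a root $\gamma$ orthogonal to $\alpha$ and non-orthogonal to $\beta$. That lemma succeeds in every configuration except $\Phi=F_{4}$ with $\alpha,\beta$ both long, which is precisely the exception recorded in the proposition. With such a $\gamma$ in hand, I would then apply Lemma~\ref{lem:gamma witness} to put $s_{\alpha,\beta}=h_{\gamma}(r)$ for a suitable $r\in R^{*}\setminus\{1\}$.

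The delicate point is verifying that the degenerate conditions of Lemma~\ref{lem:gamma witness} (namely $A_{\gamma\beta}=\pm2$ with $r=-1$, or $A_{\gamma\beta}=\pm3$ with $r^{3}=1$) never impose a restriction beyond $r\neq1$; otherwise the case $R^{*}=\{\pm1\}$ would fail outside the stated exception. For $\Phi\in\{E_{6},E_{7},E_{8}\}$ the root system is simply laced, so $A_{\gamma\beta}\in\{0,\pm1\}$ whenever $\gamma\neq\pm\beta$, and any $r\in R^{*}\setminus\{1\}$ suffices. Since $F_{4}$ never produces $A_{\gamma\beta}=\pm3$, the only possible obstruction in $F_{4}$ is $A_{\gamma\beta}=\pm2$, which forces $\gamma$ and $\beta$ to have different lengths.

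The main (though routine) obstacle is therefore a case analysis in $F_{4}$: for each subcase appearing in the proof of Lemma~\ref{lem:gamma}, I must confirm that the $\gamma$ offered there can be chosen to have the same length as $\beta$, whence $A_{\gamma\beta}=\pm1$. This requires, in some subcases, selecting between the two candidate roots (e.g.\ $\gamma=a_{4}$ versus $\gamma=a_{3}+a_{4}$) to ensure $\gamma$ is actually non-orthogonal to $\beta$, and then a short inner-product computation using the standard normalisation $|a_{1}|^{2}=|a_{2}|^{2}=2$, $|a_{3}|^{2}=|a_{4}|^{2}=1$ to read off the length. Once this check is complete, the proposition follows at once from Lemma~\ref{lem:gamma witness}.
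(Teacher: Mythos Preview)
Your approach is exactly the intended one: in the paper the proposition is stated as a summary of Lemmas~\ref{lem:gamma witness}, \ref{lem:gamma} and \ref{lem:gamma nonorthogonal}, with no separate proof, so combining them as you describe is what is meant.

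There is, however, a small but genuine slip in your $F_4$ plan. You propose to verify that $\gamma$ can always be chosen of the \emph{same length} as $\beta$. That is sufficient for $A_{\gamma\beta}=\pm1$, but it is not what actually happens, and your check would fail. In the sub-case of Lemma~\ref{lem:gamma} where $\alpha=a_4$ is short and $\beta$ is one of the three short roots $a_2+2a_3+a_4$, $a_1+a_2+2a_3+a_4$, $a_1+2a_2+2a_3+a_4$, the $\gamma$ offered is $a_1$ or $a_1+a_2$, both \emph{long}. In fact no short $\gamma$ is available there: the only short positive roots orthogonal to $a_4$ are precisely those three, and a direct inner-product computation shows they are pairwise orthogonal, so none can serve as $\gamma$ for another.

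The correct observation is sharper than ``different lengths'': in $F_4$ one has $A_{\gamma\beta}=\pm2$ precisely when $\gamma$ is short and $\beta$ is long. Hence whenever $\beta$ is short (in particular in the sub-case above) any non-orthogonal $\gamma$ yields $A_{\gamma\beta}=\pm1$, and when $\beta$ is long you only need the $\gamma$ from Lemma~\ref{lem:gamma} to be long --- which it is, since in that case $\gamma$ lies in the $A_2$-subdiagram spanned by $a_1,a_2$. With this correction your argument goes through unchanged.
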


This completes the proof of Lemma \ref{mainlemma} for $\Phi\in\{E_{6}%
,E_{7},E_{8}\}.$

\bigskip

Assume finally that $G$ is of type $F_{4}$. Let $a_{1},\ldots,a_{4}$ be
fundamental roots of $\Phi$ where $a_{1},a_{2}$ are long, $a_{3},a_{4}$ are
short and $\alpha=a_{1}$. By Proposition~\ref{prop:torus witness} there is a
torus witnesses $s_{\alpha,\beta}$ for each root $\beta\neq\pm\alpha$, with
the exception of the following long roots:

\begin{enumerate}
\item $b_{2}= a_{1} + 2a_{2} +2a_{3},$

\item $b_{3}=a_{1}+2a_{2}+2a_{3}+2a_{4}$,

\item $b_{4}=a_{1}+2a_{2}+4a_{3}+2a_{4}$.
\end{enumerate}

Note that the root subgroups $U_{1},\ldots,~U_{4}$ corresponding to
$a_{1},b_{2},b_{3},b_{4}$ commute elementwise.

To complete the proof of Lemma \ref{mainlemma} for $G=F_{4}$ set
\[
Y=\left\{  s_{\alpha,\beta}~\mid\beta\in\Phi_{+}\smallsetminus\{~a_{1}%
,b_{2},b_{3},b_{4}\}\right\}  \cup\left\{  x_{-b_{i}}(1)~\mid~i=1,2,3\right\}
.
\]
Note that each $x_{-b_{i}}(1)$ centralizes each $U_{j}$ ($j\neq i$), and
commutes with no element of $U_{i}\smallsetminus\{1\}$.

Let $g\in\mathrm{C}_{G}(Y).$ We have to show that $g\in U_{1}Z$.

Arguing as in the proof of Proposition~\ref{prop:double centralizer} we
conclude that $g$ is of the form
\[
g=u_{1}u_{2}u_{3}u_{4}z\quad\mbox{ where }\quad u_{i}\in U_{i}%
,~\ i=1,2,3,4,~z\in Z.
\]

Since $x_{-b_{3}}(1)$ centralizes $g$ and $U_{1},U_{2},U_{4}$, we have
$u_{3}=1$. We see similarly that $u_{2}=u_{4}=1$, and the result follows.

\section{The building for $G_{2}$\label{building}}

Another way to study centralizers is to examine the action of $G=G(\overline
{k})$ on the building associated to $G$. This method is practical for groups
of rank $2$; we illustrate it here in the case of $G_{2}$, by proving

\begin{proposition}
\label{prop:G_2} Let $G$ be of type $G_{2}$ and let $U$ be a root group of
$G$. Then there exists a finite set $Y\subseteq\mathrm{C}_{G(R)}(U)$ such that
$\mathrm{C}_{G}(Y)\subseteq U$.
\end{proposition}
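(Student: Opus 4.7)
The plan is to follow the Bruhat-decomposition strategy of Proposition~\ref{prop:double centralizer}, but with the torus witnesses (which may be unavailable when $R^{\ast}=\{\pm 1\}$) replaced by non-identity elements of root subgroups commuting elementwise with $U=U_\alpha$. To control what remains after the Bruhat argument, I would use the action of $G = G(\overline{k})$ on its spherical Tits building $\Delta$, which for type $G_{2}$ is a Moufang generalised hexagon; since $Z(G_{2}) = 1$, the action is faithful. The building-theoretic inputs I intend to invoke are: each root subgroup $U_\beta$ fixes pointwise a half-apartment $\Sigma_\beta \subseteq \Delta$ and acts simply transitively on the chambers adjacent to the interior wall of $\Sigma_\beta$ but not in $\Sigma_\beta$; any $g$ centralising $U_\beta$ preserves the $U_\beta$-fixed chamber set; and a building automorphism that fixes a half-apartment together with one further adjacent chamber is trivial.

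First I would use the Chevalley commutator formula (Appendix) to enumerate the roots $\beta \neq \pm\alpha$ with $[U_\alpha, U_\beta] = 1$. For $\alpha$ short, in a fundamental basis $a_1$ short, $a_2$ long with $\alpha = a_1$, these are the four long roots $3a_1+a_2,\ 3a_1+2a_2,\ -a_2,\ -(3a_1+2a_2)$; the case of $\alpha$ long is analogous by Weyl symmetry. Picking a non-identity element $y_\beta \in U_\beta(R)$ for each such $\beta$, I assemble a finite set $Y \subseteq \mathrm{C}_{G(R)}(U)$. For any $g \in \mathrm{C}_G(Y)$, write $g$ in Bruhat form~(\ref{bruhaha}); conjugation by each $y_\beta$ acts non-trivially on those root subgroups $U_\gamma$ with $[U_\gamma, U_\beta] \neq 1$, so by uniqueness of the Bruhat expression the corresponding Bruhat components must vanish. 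Combining over all chosen $\beta$, one is left with $g = u_1 \cdot tn_w$ for $u_1 \in U$, $t \in T$ and $w \in W$ with $w(\alpha) = \alpha$.

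It remains to prove $tn_w \in U$, and here the building argument closes the gap: each $y_\beta$ fixes $\Sigma_\beta$ pointwise, so $g$ preserves $\Sigma_\beta$ setwise for each chosen $\beta$. By direct inspection of the generalised hexagon, the joint setwise stabiliser of the configuration $\{\Sigma_\alpha,\Sigma_{3a_1+a_2},\Sigma_{3a_1+2a_2},\Sigma_{-a_2},\Sigma_{-(3a_1+2a_2)}\}$ -- combined with the constraint $w(\alpha)=\alpha$ -- is trivial, forcing $t=1$ and $w=1$. The main obstacle is this last geometric step: verifying that the half-apartments associated with the four commuting root subgroups really do pin $U$ down, with no residual toral or Weyl freedom. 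If they do not suffice on their own, the remedy is to append further witnesses to $Y$ -- for instance products $x_{\beta}(1)x_{\beta'}(1)$ for commuting pairs, or Weyl conjugates chosen to fix an additional chamber -- until the fixed-half-apartment configuration is combinatorially rigid.
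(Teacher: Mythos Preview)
Your Bruhat step does not go through. The uniqueness argument in Proposition~\ref{prop:double centralizer} relies on the fact that a torus element $s$ normalises $T$, $N$ and every root subgroup $U_i$ individually, so conjugation by $s$ carries the Bruhat expression (\ref{bruhaha}) to another expression \emph{of the same shape}, and one can compare factor by factor. A root element $y_\beta$ has none of these properties: $u_i^{y_\beta}$ is in general a product spread across several root groups by the Chevalley formula, and $(tn_w)^{y_\beta}$ need not lie in $N$ at all. So the sentence ``by uniqueness of the Bruhat expression the corresponding Bruhat components must vanish'' is unjustified, and you cannot conclude that $g=u_1\cdot tn_w$.

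The paper abandons Bruhat entirely for $G_2$ and works exclusively in the generalised hexagon. Your instinct to pass to the building is the right one, but the four root elements you list (for $\alpha$ short) are not enough: they only force $g$ into the pointwise stabiliser $G_{x_0,\ldots,x_6}$ of a half-apartment, which is still far from $U$. To go further one must add witnesses that are not root elements for the fixed apartment but \emph{conjugates} $y_i=v_i^{h_i}$ of root elements, with $v_i,h_i$ carefully chosen from specific $U_j(R)$ so that $y_i$ still centralises $U$ while its unique fixed ``centre'' vertex (Corollary~\ref{cor:center}) is a third neighbour of some $x_k$; then the $\mathrm{PSL}_2(\overline{k})$-action on neighbours (Remark~\ref{rem:parabolic}) forces $g\in G_{x_k}^{[1]}$. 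Doing this for each $k$ requires several such conjugates and, for the short root, a genuine case split between $\mathrm{char}(R)=3$ and $\mathrm{char}(R)\neq 3$, because the relevant commutators $[U_1,U_3]$ vanish only in characteristic $3$. Your ``by direct inspection \ldots\ the joint setwise stabiliser is trivial'' and the fallback ``append products or Weyl conjugates'' do not supply these witnesses; they are the actual content of the proof.
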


If $G$ is a Chevalley group of type $G_{2}$, we have $\mathrm{Z}(G)=1$ (see
\cite{St}, p. 23), and the associated spherical building~$\Delta$ is a
\emph{generalized hexagon}, i.e. a bipartite graph of diameter $6$, girth $12$
and valencies at least $3$ (see \cite{VM} for more details).

For vertices $x_{0},\ldots x_{m}$ in $\Delta,$
\[
G_{x_{0},\ldots,x_{m}}^{[i]}%
\]
denotes the subgroup of $G$ fixing all elements at distance at most $i$ from
some $x_{j}\in\{x_{0},\ldots,x_{m}\}$.

For $i=0$, this is just the pointwise stabilizer of $\{x_{0},\ldots,x_{m}\}$
in $G$ and we omit the superscript. In this notation, a \emph{root subgroup}
for a generalized hexagon $\Delta$ is of the form
\[
U=G_{x_{1},\ldots,x_{5}}^{[1]}%
\]
for a simple (i.e. without repetitions) path $(x_{1},\ldots,x_{5})$ in
$\Delta$. Thus our aim is to construct a finite set $Y\subseteq G(R)$
centralizing $G_{x_{1},\ldots,x_{5}}^{[1]}$ such that%
\[
g\in\mathrm{C}_{G}(Y)\text{ implies }g\in G_{x_{1},\ldots,x_{5}}^{[1]}.
\]

The generalized hexagon $\Delta$ associated to a Chevalley group of type
$G_{2}$ is a \emph{Moufang hexagon}, i.e. for any simple path $x_{0}%
,\ldots,x_{6}$ in $\Delta$ the root subgroup $G_{x_{1},\ldots,x_{5}}^{[1]}$
acts regularly on the set of neighbours of $x_{0}$ different from $x_{1}$ and
regularly on the set of neighbours of $x_{6}$ different from $x_{5}$ (see
\cite{TW}). As a consequence, we have
\begin{equation}
G_{x_{0},x_{1},\ldots,x_{5}}^{[1]}=1. \label{eq:Moufang}%
\end{equation}

We will repeatedly use the following:

\begin{remark}
\label{rem:parabolic} For any vertex $x\in\Delta$, the stabilizer $G_{x}$ is a
parabolic subgroup of $G$ and acts on the set of neighbours of $x$ as the
Zassenhaus group $\mathrm{PSL}_{2}(\overline{k})$. In particular, if $g\in G$
fixes at least three neighbours of $x$, then it fixes all neighbours of $x$.

Furthermore, for a path $(x,y)$ the stabilizer $G_{x,y}$ contains a regular
abelian normal subgroup acting as the additive group of $\overline{k}$ on the
set of neighbours of $y$ different from $x$.
\end{remark}

Most arguments rely on the following observation:

\begin{remark}
\label{rem:fix} Let $H$ be a group acting on a set $X$, let $g\in H$ and let
$A$ be the set of fixed points of $g$. Any $h\in H$ centralizing $g$ leaves
the set $A$ invariant.
\end{remark}

In light of (\ref{eq:Moufang}) this remark immediately implies

\begin{corollary}
\label{cor:center} For any root element $u\in G_{x_{1},\ldots,x_{5}}%
^{[1]}\setminus\{1\},$ each $g\in\mathrm{C}_{G}(u)$ fixes $x_{3}$.
\end{corollary}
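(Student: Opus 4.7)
The plan is to apply Remark~\ref{rem:fix} and then derive a contradiction unless $g(x_3)=x_3$, using (\ref{eq:Moufang}) as the main tool. Write $A:=\mathrm{Fix}(u)$; by Remark~\ref{rem:fix}, $g$ preserves $A$, and since $u\in G_{x_1,\ldots,x_5}^{[1]}$ we have $N(x_i)\subseteq A$ for $i=1,\ldots,5$. Since $g$ is a collineation of $\Delta$, $N(g(x_3))=g(N(x_3))\subseteq g(A)=A$; writing $w:=g(x_3)$, this says that $u$ fixes $w$ and every neighbour of $w$, i.e.\ $u\in G_{w,x_1,x_2,x_3,x_4,x_5}^{[1]}$. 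The goal is to show that this forces $w\in\{x_1,\ldots,x_5\}$ and in fact $w=x_3$.

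If the six vertices $w,x_1,\ldots,x_5$ form a simple $5$-edge path in $\Delta$, then (\ref{eq:Moufang}) immediately gives $u=1$, contradicting the hypothesis. This is the case precisely when $w\in N(x_1)\setminus\{x_2\}$ or $w\in N(x_5)\setminus\{x_4\}$, so these positions for $w$ are ruled out at once. For the remaining candidates for $w$ -- namely $w\in\{x_1,x_2,x_4,x_5\}$, $w$ a non-path neighbour of $x_2,x_3$ or $x_4$, or $w$ at distance at least $2$ from every $x_i$ -- the naive six-vertex set $\{w,x_1,\ldots,x_5\}$ branches rather than forming a simple path, so (\ref{eq:Moufang}) does not apply directly. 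I would handle these iteratively: the hypothesis $N(w)\subseteq A$ produces further vertices $z\in N(w)$ fixed by $u$, and applying the Moufang property to suitable $7$-path extensions of the configuration $\{w,z,x_1,\ldots,x_5\}$ (with the girth-$12$ condition keeping all paths simple) propagates the fix-set until six fixed-neighbourhood vertices are assembled along a simple $5$-edge path, at which point (\ref{eq:Moufang}) again forces $u=1$.

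Once every $w\notin\{x_1,\ldots,x_5\}$ has been excluded, $g$ restricts to a permutation of the $5$-path $(x_1,\ldots,x_5)$; as $g$ preserves $\Delta$-adjacency, the induced permutation is an automorphism of this path, hence either the identity or the reflection $x_i\mapsto x_{6-i}$, both of which fix the middle vertex $x_3$. The main obstacle is the iterative step for the interior branches, where building a $5$-edge simple path of fixed-neighbourhood vertices requires carefully chosen $7$-path extensions and repeated appeals to Moufang regularity of neighbouring root subgroups, with the girth-$12$ property of $\Delta$ doing the combinatorial bookkeeping that rules out short cycles obstructing simplicity.
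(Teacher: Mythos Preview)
Your overall plan---Remark~\ref{rem:fix} plus (\ref{eq:Moufang})---is exactly the paper's one-line argument; the paper says no more than ``immediately implies'', so you are filling in what it omits. There is, however, a genuine gap in your iterative step. To invoke (\ref{eq:Moufang}) you need six vertices $z_0,\ldots,z_5$ forming a simple path with $u\in G_{z_i}^{[1]}$ for \emph{every} $i$, i.e.\ with $N(z_i)\subseteq A$. Your proposed extension picks $z\in N(w)$ and uses only that $z\in A$; nothing forces $N(z)\subseteq A$, so $z$ cannot be appended to the path and the iteration does not advance. The interior-branch cases (e.g.\ $w$ a neighbour of $x_3$ off the given path, or $w$ at distance $\geq 2$ from every $x_i$) therefore remain open, and your concluding sentence about $g$ permuting $\{x_1,\ldots,x_5\}$ is not yet justified, since you have only argued about $w=g(x_3)$.

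What is missing is the observation that $gug^{-1}=u$ yields $u\in G_{g(x_1),\ldots,g(x_5)}^{[1]}$: you have a whole second simple $4$-path $(y_1,\ldots,y_5):=(g(x_1),\ldots,g(x_5))$ inside $S:=\{v:u\in G_v^{[1]}\}$, not just the single vertex $g(x_3)$. Every vertex you need is now already known to lie in $S$, and the problem becomes: show that two simple $4$-paths in $S$ must coincide, given that (\ref{eq:Moufang}) forbids any simple $5$-path in $S$. When the two paths meet or come within distance $1$ of each other, splice a tail of one onto a tail of the other (girth $12$ guarantees simplicity) to build the forbidden $5$-path; when they are far apart, the full Moufang regularity stated just before (\ref{eq:Moufang}) shows directly that $u$ moves a vertex which the second path forces to be fixed. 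Either way $\{y_i\}=\{x_i\}$, so $g$ permutes the path and fixes its midpoint $x_3$. This uniqueness of the root of a nontrivial root elation is a standard fact for Moufang polygons, which is presumably why the paper treats it as immediate.
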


For any $12$-cycle $(x_{0},\ldots,x_{12}=x_{0})$ in $\Delta$, the group
$G_{x_{0},\ldots,x_{12}}$ is a maximal torus in $G$. We let $U_{i}%
=G_{x_{i},\ldots,x_{i+4}}^{[1]}$, ($i=0,\ldots,11$) denote the corresponding
root subgroups (where addition is modulo $12$), so $U_{1}=U$.
In this notation we see that for $1\neq v\in U_{i}$ and $g\in\mathrm{C}%
_{G}(v)$ we have $g\in G_{x_{i+2}}$ by Corollary~\ref{cor:center}.

The bipartition of the vertices leads to two types of paths $(x_{0}%
,\ldots,x_{6})$ depending on the type of the initial vertex $x_{0}$ (note that
$x_{0}$ and $x_{6}$ have the same type). Since $G$ acts transitively on
ordered cycles of length $12$ (of the same bipartition type), the isomorphism
type of a root subgroup only depends on the type of the root group with
respect to this bipartition.

\medskip

\noindent It follows easily from the commutation relations that the root
subgroups corresponding to long roots consist of \emph{central elations}, i.e.
for one type of path $(x_{0},\ldots,x_{6})$ we have $G_{x_{1},\ldots,x_{5}%
}^{[1]}=G_{x_{3}}^{[3]}$.

First assume that $U=U_{1}=G_{x_{3}}^{[3]}$. \ Since $U$ centralizes $U_{j}$
for$~j=10,11,0,1,2,3,4,$ we may choose $Y$ to contain a nontrivial element
from each of the $U_{j}(R),~j=10,11,0,1,2,3,4$. We add five further elements
$y_{i}=v_{i}^{h_{i}}$ to $Y$ where%
\[
v_{1},v_{2} \in U_{3}(R);~v_{3}\in U_{4}(R),~v_{4}\in~U_{11}(R),~v_{5}\in
U_{10}(R);
\]
\[
h_{1} \in~U_{11}(R);~h_{2},h_{3}\in~U_{0}(R);~h_{4}\in U_{3}(R),~h_{5}\in
U_{2}(R),
\]
and $v_{i}\neq1,$ $h_{i}\neq1$ for each $i$. These centralize $U$ because for
$i=1,~2$ we have $[U,y_{i}]\subseteq G_{x_{5}^{h_{i}}}^{[3]}\cap G_{x_{3}%
}^{[3]}=1,$ $[U,y_{3}]\subseteq G_{x_{6}^{h_{3}}}^{[3]}\cap G_{x_{3}}%
^{[3]}=1,$ $[U,y_{4}]\subseteq G_{x_{1}^{h_{4}}}^{[3]}\cap G_{x_{3}}^{[3]}=1$
and $[U,y_{5}]\subseteq G_{x_{0}^{h_{5}}}^{[1]}\cap G_{x_{3}}^{[3]}=1$.

Now suppose that $g$ centralizes $Y$. Then $g\in G_{x_{0},\ldots,x_{6}}$. We
claim that $g\in G_{x_{i}}^{[1]}$ for $~i=1,\ldots,5$. Since $g$ commutes with
$y_{1},$ $g$ fixes $x_{5}^{h_{1}}\neq x_{3},x_{5}$. By
Remark~\ref{rem:parabolic} this implies that $g\in G_{x_{4}}^{[1]}$. Using
$y_{4}$ we see similarly that $g\in G_{x_{2}}^{[1]}$.

Similarly, since $g$ commutes with $y_{2},$ $g$ fixes $x_{5}^{h_{2}}$ and
hence also $x_{4}^{h_{2}}\neq x_{2},x_{4}$. This implies that $g\in G_{x_{3}%
}^{[1]}$. Finally, $g$ fixes $x_{6}^{h_{3}}$ and $x_{0}^{h_{5}}$\ because $g$
commutes with $y_{3}$ and $y_{5}$; as before we conclude from
Remark~\ref{rem:parabolic} that $g\in G_{x_{1},x_{5}}^{[1]}$, and the claim follows.

\medskip Now assume that $U=U_{1}\neq G_{x_{3}}^{[3]}$, and so $U_{2i}%
=G_{x_{2i+2}}^{[3]}$ for $i=0,\ldots,5$. Then $U$ commutes elementwise with
$U_{10},U_{0},U_{2},U_{4}$, and we choose $Y$ to contain a nontrivial element
from each of $U_{10}(R),~U_{0}(R),~U_{2}(R),~U_{4}(R).$

This ensures, by Remark~\ref{rem:fix}, that any element centralizing $Y$ must
lie in
\[
G_{x_{0},x_{2},x_{4},x_{6}}=G_{x_{0},x_{1},x_{2},x_{3},x_{4},x_{5},x_{6}}.
\]

As in the previous case, we extend $Y$ by four or six further elements
$y_{i}=v_{i}^{h_{i}},$ where $v_{i}\neq1,$ $h_{i}\neq1$ for each $i$,
\begin{align*}
v_{1}  &  \in U_{4}(R),~v_{2}\in U_{2}(R),~v_{3}\in U_{1}(R),~v_{4}\in
U_{10}(R);\\
h_{1}  &  \in~U_{0}(R),~h_{2}\in~U_{10}(R),~h_{3}\in U_{3}(R),~h_{4}\in
U_{2}(R),
\end{align*}
and if $\mathrm{char}(R)=3$ also
\[
v_{5}\in U_{10}(R),~v_{6}\in U_{4}(R);~h_{5}\in U_{3}(R),~h_{6}\in U_{11}(R).
\]
Note that $y_{1}$ and $y_{2}$ centralize $U$ because
\[
\lbrack U,y_{1}]\in G_{x_{6}^{h_{1}}}^{[3]}\cap G_{x_{2}}^{[1]}=1,~[U,y_{2}%
]\in G_{x_{4}^{h_{2}}}^{[3]}\cap G_{x_{0}}^{[1]}=1.
\]
Now let $g\in\mathrm{C}_{G}(Y).$ Then $g$ centralizes $y_{1},$ and therefore
fixes $x_{6}^{h}\neq x_{4},x_{6}$. By Remark~\ref{rem:parabolic} we get $g\in
G_{x_{5}}^{[1]}.$ In a similar way we find that $g\in G_{x_{1}}^{[1]}$ and
$g\in G_{x_{3}}^{[1]}$.

It remains to show that $g\in G_{x_{i}}^{[1]}$ for $i=2$ and $i=4$. We
distinguish two cases according to the characteristic of $R$. First assume
that $\mathrm{char}(R)\neq3$ and extend the path $(x_{1},\ldots,x_{5})$ to a
simple path $(x_{1},\ldots,x_{7})$. For any $v\in G_{x_{3},\ldots,x_{7}}%
^{[1]}\setminus\{1\}$ \ and $1\neq u\in U_{1}$ the commutator relations (see
\S \ref{comfor}) with $\mathrm{char}(k)\neq3$ imply that $[u,v]\neq1$. This
shows that $x_{1},x_{3}$ are the only neighbours $y$ of $x_{2}$ such that
$G_{y}^{[1]}$ meets $U_{1}$ nontrivially.

On the other hand, for any simple path $(x_{1}^{\prime},x_{2},x_{3}%
,x_{4},x_{5})$ the actions of the root groups $U_{1}$ and $U_{1}^{\prime
}=G_{x_{1}^{\prime},x_{2},x_{3},x_{4},x_{5}}^{[1]}$ on the neighbours of
$x_{6}$ agree, by Remark~\ref{rem:parabolic}. Since the root groups are
abelian, we therefore have $[U,w]=1$ for any $w\in U_{1}^{\prime}$.

This shows in particular that $y_{3}\in\mathrm{C}_{G(R)}(U)$. By the previous
remark $x_{1}^{\prime}=x_{1}^{h_{3}}$ and $x_{3}$ are the only neighbours of
$x_{2}$ such that $y_{3}\in G_{x_{1}^{\prime},x_{3}}^{[1]}$, and so $g$ fixes
$x_{1}^{\prime}$. Again by Remark~\ref{rem:parabolic} we conclude that $g\in
G_{x_{2}}^{[1]}$. \ Similarly, we see that $y_{3}\in\mathrm{C}_{G(R)}(U)$, and
find that $g\in G_{x_{4}}^{[1]}$ as required.

Finally assume that $\mathrm{char}(R)=3$. Then the commutation relations show
that $[U_{1},U_{3}]=1$ and hence we have
\[
U_{1}=G_{x_{2},x_{4}}^{[2]}.
\]
As $h_{5}\in U_{3}$ and $v_{5}\in U_{10}=G_{x_{0}}^{[3]}$, we have
$[U,y_{5}]\in U_{1}\cap G_{x_{0}^{h}}^{[3]}=1$, so $y_{5}\in\mathrm{C}%
_{G(R)}(U)$. Now Corollary~\ref{cor:center} implies that $g$ fixes
$x_{0}^{h_{5}}$ and hence also $x_{1}^{h_{5}}\neq x_{1},x_{3}$. As before we
infer that $g\in G_{x_{2}}^{[1]}$. The same argument using $y_{6}$ shows
finally that $g\in G_{x_{4}}^{[1]}$, and concludes the proof.

\section{Root witnesses in the classical groups\label{class}}

In this section we establish Lemma \ref{mainlemma} for the groups of classical
type, and complete the proof of Theorem \ref{thm:double centralizer}.

\begin{proposition}
\label{dct}Let $G$ be a Chevalley group of type $A_{n},$ $B_{m},~C_{m}$ or
$D_{m}$ ($n\geq3, \ m\geq2$), and let $R$ be an integral domain. Let $U$ be a
root subgroup of $G$. Write $Z$ for the centre of $G$. There exists a set
$Y\subseteq\mathrm{C}_{G(R)}(U)$ consisting of root elements such that%
\begin{equation}
\mathrm{C}_{G}(Y)\subseteq UZ, \label{rootwit}%
\end{equation}
unless $G$ is of type $C_{n},$ $U$ belongs to a short root $\alpha$ and
$R^{\ast}=\{\pm1\}$, in which case%
\begin{equation}
\mathrm{C}_{G}(Y)\subseteq UU_{1}U_{2}Z \label{ii}%
\end{equation}
where $U_{1}$ and $U_{2}$ are root subgroups belonging to long roots adjacent
to $\alpha$ in a $C_{2}$ subsystem.
\end{proposition}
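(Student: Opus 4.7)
The plan is to replace the torus witnesses of \S\ref{dcanddefrt}--\S\ref{torus2} by \emph{root witnesses}: a set $Y$ consisting of elementary root elements $x_\beta(1)$. These are available in $G(R)$ for every root $\beta$ regardless of $R^\ast$, so we will dispense entirely with a unit hypothesis on $R$. A root element $x_\beta(1)$ automatically lies in $\mathrm{C}_{G(R)}(U_\alpha)$ whenever $\alpha+\beta\notin\Phi$ and $\beta\neq-\alpha$, giving a large pool of candidates. By Weyl-group transitivity on each root-length orbit it suffices to prove the proposition for one representative $\alpha$ in each orbit.

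For each classical type we work in the standard matrix model: $\mathrm{SL}_{n+1}$ for $A_n$, $\mathrm{Sp}_{2m}$ for $C_m$, and the split orthogonal group for $B_m$ and $D_m$. In these models a root element $x_\beta(1)=I+N_\beta$ is a transvection with $N_\beta$ nilpotent of rank one or two, and ``$g$ centralizes $x_\beta(1)$'' becomes the linear constraint $gN_\beta=N_\beta g$. I will choose $\beta_1,\ldots,\beta_r$ so that this system, combined with the defining equations of $G$, first forces $g$ to normalize $U_\alpha$ and then to act as a scalar on its root line, at which point a direct matrix computation places $g$ in $U_\alpha Z$. The central factor $Z$ appears precisely in those types where $|Z|>1$, matching the stated formulation.

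The sole obstruction is $C_n$ with $\alpha$ short and $R^\ast=\{\pm1\}$. In a $B_2$-subsystem containing $\alpha$ and an adjacent long root $\beta$, the Chevalley commutator for $[x_\alpha(r),x_{\alpha+\beta}(s)]$ carries a coefficient $\pm 2rs$ on $x_{2\alpha+\beta}$, and this is the only mechanism available to separate $U_\alpha$ from the two adjacent long-root subgroups $U_1,U_2$. When the only units are $\pm1$, every root element centralizing $U_\alpha$ is forced to centralize all of $U_\alpha U_1 U_2$, so a root-only $Y$ can do no better than $\mathrm{C}_G(Y)\subseteq U U_1 U_2 Z$, which is exactly~$(\ref{ii})$. (As soon as some unit $u\neq\pm1$ is present, adjoining the torus witness $h_\gamma(u)$ of Proposition~\ref{prop:torus witness with units} recovers the stronger~$(\ref{rootwit})$.)

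The main technical burden is the matrix-level centralizer computation, which requires a case split on type and on short versus long $\alpha$; the hard part is \emph{not} the choice of $Y$, which is dictated by the commuting roots, but rather verifying that the resulting linear constraints leave no room beyond $U_\alpha Z$ in the low-rank cases ($A_3$, $B_2$, $C_2$, $D_2$) where the orbit of available witnesses is smallest. Once Proposition~\ref{dct} is in hand, Theorem~\ref{thm:double centralizer} follows as outlined after Lemma~\ref{mainlemma}: the chain $Z\le V\le\mathrm{C}_G(\mathrm{C}_{G(R)}(u))\le\mathrm{C}_G(Y)\le UZ$ (respectively $\le U U_1 U_2 Z$) becomes a chain of equalities by the dimension count of \cite{LT,S1,S2}, and Lemma~\ref{step1} then converts these inclusions into $(\ref{dc1})$ and $(\ref{dc2})$.
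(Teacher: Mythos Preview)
Your plan is correct and is essentially the paper's own approach: exhibit explicit witness sets of root elements in the standard matrix realisations ($\mathrm{SL}_{n+1}$, $\mathrm{Sp}_{2m}$, $\mathrm{O}_{2m}$, $\mathrm{O}_{2m+1}$) and verify the centralizer inclusion by direct linear algebra, treating one root per length via Weyl transitivity.

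Two small points. First, you omit the isogeny reduction that the paper makes explicit: one computes in a fixed form of $G$, and the conclusion transfers to every isogeny form because root elements are unipotent while $Z$ is semisimple, so centralizing a root element modulo $Z$ is the same as centralizing it outright; this is what lets the paper pass from the chosen matrix model to arbitrary $G$. Second, your diagnosis of the $C_n$ short-root exception is slightly off. In $\mathrm{Sp}_{2m}$ every root element is a transvection $1+rN$ with $N^2=0$, so its centralizer in $G(\overline{k})$ is independent of the parameter $r$; consequently a root-only $Y$ can never beat (\ref{ii}) for a short root in type $C$, \emph{regardless} of $R^\ast$ (indeed the paper's $X_2$ is centralized by all of $U\cdot\phi(\overline{k})$). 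Your parenthetical about adjoining $h_\gamma(u)$ would take $Y$ outside the class of root elements, so it does not speak to Proposition~\ref{dct} as stated; the $R^\ast$-dependent sharpening to (\ref{dc1}) is carried out separately in Proposition~\ref{exactdc} using the non-root matrices $\xi$ and $\tau$.
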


\begin{proof}
Suitable sets $Y$ are exhibited in the lemmas below for particular forms of
$G:$ the universal groups $\mathrm{SL}_{n}$ and $\mathrm{Sp}_{2m}$ for
$A_{n},$ $C_{m}$ respectively, and for orthogonal versions of $B_{m}$ and
$D_{m}$. Now if $v$ is a unipotent element and $v^{g}\in vZ$ then $v^{g}=v,$
because $Z$ consists of semisimple elements (Jordan decomposition); hence both
statements involving $Y$ remain true if $\mathrm{C}_{G(R)}$ is replaced by
`centralizer modulo $Z$'. It follows easily that if (\ref{rootwit}) or
(\ref{ii}) holds, then it remains valid when $G$ is replaced by $G/Z.$ In
particular, they hold for the adjoint form of each group, and any group
`between' $\mathrm{SL}_{n}$ and $\mathrm{PSL}_{n}$.

The result for the universal forms (in cases $B_{m}$ and $D_{m}$) follows
directly from the established cases because root elements in $G(R)$ lift to
root elements in the covering group.
\end{proof}

\medskip

The precise description of $\mathrm{Z}(\mathrm{C}_{G(R)}(u))$ for $1\neq u\in
U$ in Case (\ref{ii}) is given below in Proposition \ref{exactdc}.

We use the notation of \cite{C}, \S 11.3 for the classical groups. Throughout,
$R$ denotes an integral domain, and $e_{ij}$ the matrix with one non-zero
entry equal to $1$ in the $(i,j)$ place. We call a set $Y\subseteq
\mathrm{C}_{G(R)}(U_{\alpha})$ satisfying (\ref{rootwit}), resp. (\ref{ii}) a
\emph{witness set} for $U_{\alpha}$.

In most cases, the verification that $Y$ has the required properties is a
relatively straightforward matrix calculation, which we omit. Of course it
will suffice to consider just one root of each length.

\subsection*{The special linear group}

The root subgroups in $\mathrm{SL}_{n}$ are%
\[
U_{ij}=1+\overline{k}e_{ij},~\ i\neq j.
\]

\begin{lemma}
Let $G=\mathrm{SL}_{n}$, $n\geq2$. Then a witness set for $U_{12}$ is
\[
Y=\{1+e_{pq}~\mid~p\neq2,~q\neq1\}.
\]

\end{lemma}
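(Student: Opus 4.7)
The plan is to exploit the very concrete matrix description of root subgroups in $\mathrm{SL}_n$, namely $U_{pq} = 1 + \overline{k}\,e_{pq}$. The verification that $Y \subseteq \mathrm{C}_{G(R)}(U_{12})$ is immediate from $e_{12}e_{pq} = \delta_{2,p}\,e_{1q}$ and $e_{pq}e_{12} = \delta_{q,1}\,e_{p2}$: both products vanish whenever $p \neq 2$ and $q \neq 1$, so $1+r\,e_{12}$ commutes with each $1+e_{pq}$ for every $r \in R$.

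The substance is the reverse inclusion $\mathrm{C}_G(Y) \subseteq U_{12}Z$. For $g = (g_{ij}) \in \mathrm{SL}_n(\overline{k})$, the condition $g(1+e_{pq}) = (1+e_{pq})g$ is equivalent to $g e_{pq} = e_{pq} g$, and comparing entries gives three elementary conditions for each $(p,q)$: column $p$ of $g$ is supported on row $p$, row $q$ of $g$ is supported on column $q$, and $g_{pp} = g_{qq}$. Letting $p$ range over $\{1,3,4,\ldots,n\}$, the column conditions collectively kill every off-diagonal entry of $g$ outside column $2$; letting $q$ range over $\{2,3,\ldots,n\}$, the row conditions kill every off-diagonal entry outside row $1$; intersecting, only the $(1,2)$-entry of $g$ can be nonzero off the diagonal. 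The scalar conditions $g_{pp} = g_{qq}$ across these indices link every diagonal entry of index $\neq 2$ to every diagonal entry of index $\neq 1$, which (for $n \geq 2$) collapses all diagonal entries to a common value $\lambda$.

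Combining, $g = \lambda I + t\,e_{12}$ for some $t \in \overline{k}$. Since $\det(g) = \lambda^n = 1$, the scalar $\lambda I$ lies in $Z$, and so $g = \lambda I \cdot (1 + \lambda^{-1} t\, e_{12}) \in Z \cdot U_{12}$, as required. I do not anticipate any real obstacle: the argument is purely a bookkeeping exercise on matrix entries, and the only thing worth checking is that the combinatorics of the indices in $Y$ is rich enough simultaneously to pin down the off-diagonal shape and to connect all diagonal entries, both of which are immediate since the index $2$ never appears as a column restriction and the index $1$ never appears as a row restriction.
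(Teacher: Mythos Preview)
Your proof is correct and is precisely the routine matrix calculation the paper alludes to and then omits (``a relatively straightforward matrix calculation, which we omit''). There is nothing to add.
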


\subsection*{Symplectic groups and even orthogonal groups}

Now we consider $C_{m}(\overline{k})$ and $D_{m}(\overline{k})$ as groups of
$2m\times2m$ matrices, as described in \cite{C}, \S 11.3. Here $n=2m$ and we
re-label the matrix entries writing $-i$ in place of $m+i,$ ($i=1,\ldots,m$).
For $1\leq\left\vert i\right\vert <\left\vert j\right\vert \leq m$ set%
\begin{equation}
\alpha_{ij}=e_{ij}+\varepsilon e_{-j,-i}, \label{alpha}%
\end{equation}
where $\varepsilon=\pm1$ depends on $(i,j)$ in a manner to be specified.

\bigskip We now separate cases.\medskip

\textbf{Case 1:} $G=C_{m}=\mathrm{Sp}_{2m}.$ \ In this case, $\varepsilon$ is
$-1$ or $1$ according as $i$ and $j$ have the same or opposite signs. The root
subgroups in $G$ are%
\begin{align*}
U_{i}  &  =1+\overline{k}e_{i,-i}\text{ \ \ (long roots), \ }1\leq\left\vert
i\right\vert \leq m\\
U_{ij}  &  =1+\overline{k}\alpha_{ij}\text{ \ (short roots), \ }%
~1\leq\left\vert i\right\vert <\left\vert j\right\vert \leq m,
\end{align*}
taking $\varepsilon=-1$ if $ij>0,$ $\varepsilon=1$ if $ij<0.$

\medskip

\begin{lemma}
\label{symp}Let $G=\mathrm{Sp}_{2m}$. A witness set for the long root group
$U_{1}$ is%
\[
X_{1}=\{1+e_{i,-i}~\mid~i\notin\{-1,2\}~\}\cup\{1+\alpha_{1j}~\mid2\leq~j\leq
m~\}
\]
and a witness set for the short root group $U_{12}$ is%
\[
X_{2}=\left\{  1+e_{i,-i}~\mid i\neq-1,2\right\}  \cup\{1+\alpha_{1j}~\mid
j\neq\pm1,-2~\}~.
\]

\end{lemma}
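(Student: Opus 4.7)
The plan is to verify both witness-set claims by direct matrix computation in $\mathrm{M}_{2m}(\overline k)$, proceeding in three steps for each of $X_1$ and $X_2$: (a) check that the proposed set lies in $\mathrm{C}_{G(R)}(U_\alpha)$; (b) unpack the system $[g,y]=0$ for $y\in X_i$ as linear relations on the entries of $g\in\mathrm{C}_G(X_i)$; (c) impose the symplectic identity $g^TJg=J$ to cut the resulting family down to the required form. The containment in (a) reduces, via the expansion $[y,x]=yx-xy$ together with the multiplication rule $e_{pq}e_{rs}=\delta_{qr}e_{ps}$, to a short list of index checks, and the stated exclusions (such as $i\notin\{-1,2\}$) are exactly what makes the relevant cross terms vanish.

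The core computation is step (b). Each relation $[g,e_{i,-i}]=0$ unpacks as three conditions: column $i$ of $g$ is supported only on the diagonal, row $-i$ of $g$ is supported only on the diagonal, and $g_{ii}=g_{-i,-i}$. Running this over the relevant indices kills all but a small set of off-diagonal entries of $g$ and equates the `paired' diagonal entries. The further relations $[g,\alpha_{1j}]=0$ then equate $g_{11}$ with $g_{jj}$, so that every surviving diagonal entry equals a common scalar $\lambda$, and $g$ takes the form $\lambda I+M$ with $M$ supported on an explicit short list of off-diagonal positions.

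For $X_1$, step (c) --- the symplectic identity $g^TJg=J$ expanded via $M^TJ+JM=0$ and $M^TJM=0$ --- forces $\lambda^2=1$ and eliminates all entries of $M$ outside $\overline k\cdot e_{1,-1}$, giving $g\in U_1Z$. For $X_2$, the residual off-diagonal entries lie in three one-parameter families corresponding to the root spaces of $U_{12}$, $U_1$, $U_2$; the vanishing products $\alpha_{12}^2=e_{1,-1}^2=e_{2,-2}^2=0$ then let one rewrite $g=(1+s\alpha_{12})(1+ae_{1,-1})(1+be_{2,-2})\cdot\lambda I$, establishing $\mathrm{C}_G(X_2)\subseteq U_{12}U_1U_2Z$, which is~(\ref{ii}). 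The main obstacle is purely organizational: tracking which entries of $g$ are forced to zero, forced equal, or remain free across all the centralizer equations, and then matching the residual degrees of freedom exactly against $U_\alpha Z$ or $U_{12}U_1U_2Z$.
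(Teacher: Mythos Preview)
Your three-step plan is exactly what the paper has in mind; the paper in fact omits the verification entirely, calling it ``a relatively straightforward matrix calculation''. So the overall approach is correct and coincides with the paper's implicit argument.

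There is, however, a genuine gap in your step (c) for $X_1$. After steps (a) and (b) the off-diagonal part of $g$ is $M = a\,e_{1,-1} + b\,e_{1,2} + c\,e_{-2,-1} + d\,e_{-2,2}$, and the symplectic identity $g^{T}Jg=J$ forces only $\lambda^{2}=1$ and $b+c=0$; it does \emph{not} kill $b$ or $d$. Concretely, both $1+\alpha_{12}=1+e_{12}-e_{-2,-1}$ and $1+e_{-2,2}$ are symplectic and commute with every element of $X_1$ as written, so $\mathrm{C}_G(X_1)$ contains $U_{12}$ and $U_{-2}$ and is three-dimensional modulo $Z$, not one-dimensional. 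This reflects a typo in the paper's statement of $X_1$: the exclusion should read $i\neq -1$ rather than $i\notin\{-1,2\}$, so that $1+e_{2,-2}$ is included in $X_1$; the relation $[g,e_{2,-2}]=0$ then kills column $2$ and row $-2$ of $g$, forcing $b=c=d=0$, and your argument goes through. Carrying out the computation carefully would have revealed this rather than confirmed the claim.

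A smaller slip occurs in your $X_2$ analysis: the surviving long-root direction is $e_{-2,2}$ (the group $U_{-2}$), not $e_{2,-2}$. Since $1+e_{-2,2}\in X_2$, commuting with it forces $g_{2,-2}=0$; it is the entry $g_{-2,2}$ that remains free. With this correction your factorisation reads $g=\pm(1+s\alpha_{12})(1+ae_{1,-1})(1+de_{-2,2})$, which is exactly the form appearing in the paper's subsequent Proposition~\ref{exactdc}.
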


Now let $v=1+r\alpha_{12}\in U_{12}$, $0\neq r\in R$. To identify the subgroup
$\mathrm{Z}(\mathrm{C}_{G(R)}(v))$ more precisely, set%
\[
\xi=(e_{1,-2}-e_{2,-1})-(e_{-1,2}-e_{-2,1})+\sum_{\left\vert i\right\vert
>2}e_{ii}.
\]
Then $\xi\in\mathrm{C}_{G(R)}(v)$. If $g\in ZUU_{1}U_{2}$ and $g$ commutes
with $\xi$ we find that%
\begin{align}
g  &  =\pm(1+c\alpha_{12})(1+ae_{1,-1})(1-ae_{-2,2})\label{gandphi}\\
&  =\pm(1+c\alpha_{12})\cdot\phi(a)\nonumber
\end{align}
for some $a,~c\in\overline{k}$, where $\phi:\overline{k}\rightarrow
U_{1}U_{-2}$ is the `diagonal' homomorphism
\[
r\longmapsto1+r(e_{1,-1}-e_{-2,2})=(1+re_{1,-1})(1-re_{-2,2}).
\]
Now we can state

\begin{proposition}
\label{exactdc}Let $G$ and $v$ be as above. Then%
\begin{align}
\mathrm{Z}(\mathrm{C}_{G(R)}(v))  &  \leq\pm U_{12}(R)\cdot\phi(R),
\label{eq1}\\
\mathrm{Z}(\mathrm{C}_{G(R)}(v))  &  =\pm U_{12}(R)\text{ ~~~~~~~\ if }%
R^{\ast}\neq\{\pm1\},\label{mid}\\
\mathrm{Z}(\mathrm{C}_{G(R)}(v))  &  =\pm U_{12}(R)\cdot\phi(R)\text{ if
}R^{\ast}=\{\pm1\}\text{ and }\mathrm{char}(R)\neq2. \label{eq2}%
\end{align}

\end{proposition}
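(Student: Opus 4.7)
The plan is to handle the three statements in order, using the coarse bound $\mathrm{Z}(\mathrm{C}_{G(R)}(v))\leq\pm U_{12}(R)U_1(R)U_{-2}(R)$ of (\ref{dc2}) (supplied by Lemma~\ref{symp}), and writing a typical candidate element as $g=\pm(1+c\alpha_{12})(1+ae_{1,-1})(1+be_{-2,2})$. For (\ref{eq1}), one verifies directly that $\xi\in\mathrm{Sp}_{2m}(R)$ commutes with $\alpha_{12}$ (hence with $v$), and that $\xi e_{1,-1}\xi^{-1}=-e_{-2,2}$, $\xi e_{-2,2}\xi^{-1}=-e_{1,-1}$. Since $U_1$ and $U_{-2}$ commute elementwise, the condition $g^{\xi}=g$ applied to the two long-root factors forces $b=-a$, so $g\in\pm U_{12}(R)\cdot\phi(R)$. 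Moreover the reverse inclusion $\pm U_{12}(R)\leq\mathrm{Z}(\mathrm{C}_{G(R)}(v))$ holds without any units hypothesis: any $h\in\mathrm{C}_{G(R)}(v)$ satisfies $rh\alpha_{12}=r\alpha_{12}h$ in $M_{2m}(R)$, and cancelling $r\neq 0$ in the domain $R$ gives $h\alpha_{12}=\alpha_{12}h$, hence $h$ commutes with every element of $U_{12}(R)$; and $Z(R)=\{\pm I\}$ is central in all of $G(R)$.

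For (\ref{mid}), pick $r\in R^{\ast}$ with $r^{2}\neq 1$ and let $\gamma=e_{1}+e_{2}$, a short root of $C_{m}$ orthogonal to $\alpha_{12}$. Then $s=h_{\gamma}(r)\in T(R)$ centralizes $U_{12}$ (so $s\in\mathrm{C}_{G(R)}(v)$), while $A_{\gamma,2e_{1}}=2$ and $A_{\gamma,-2e_{2}}=-2$ give
\[
\phi(a)^{s}=(1+ar^{-2}e_{1,-1})(1-ar^{2}e_{-2,2}).
\]
The identity $g=g^{s}$ then forces $a=0$ in the domain $R$; combined with the first paragraph this gives $\mathrm{Z}(\mathrm{C}_{G(R)}(v))=\pm U_{12}(R)$.

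For (\ref{eq2}) the upper bound is (\ref{eq1}); it remains to show $\phi(R)\leq\mathrm{Z}(\mathrm{C}_{G(R)}(v))$. Partition the indices as $I_{1}=\{1,-2\}$, $I_{2}=\{2,-1\}$, $I_{3}=\{\pm 3,\ldots,\pm m\}$. Solving $g\alpha_{12}=\alpha_{12}g$ entry by entry yields the block shape
\[
g=\begin{pmatrix}A & B_{1} & B_{2}\\ 0 & D & 0\\ 0 & C & F\end{pmatrix},\qquad A=\begin{pmatrix}a' & b'\\ c' & d'\end{pmatrix},\ D=\begin{pmatrix}a' & -b'\\ -c' & d'\end{pmatrix}.
\]
Writing $\phi(a)=I+aE$, where $E$ has its sole nonzero block $\sigma=\bigl(\begin{smallmatrix}0 & 1\\ -1 & 0\end{smallmatrix}\bigr)$ on $I_{1}\times I_{2}$, a direct block multiplication shows $[\phi(a),g]=0$ iff $\sigma D=A\sigma$. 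The $(I_{1},I_{2})$-block of the symplectic condition $g^{T}Jg=J$ supplies $2a'c'=0$, $2b'd'=0$, and $a'd'+b'c'=1$; with $\mathrm{char}(R)\neq 2$ and $R$ a domain these force $a'c'=b'd'=0$, and then $R^{\ast}=\{\pm 1\}$ restricts $A$ to one of $\pm I_{2}$ or $\pm\bigl(\begin{smallmatrix}0 & 1\\ 1 & 0\end{smallmatrix}\bigr)$. Each such matrix is symmetric with equal diagonal entries, for which $\sigma D=A\sigma$ holds by inspection, completing the proof. The principal difficulty is this last step: one must derive the block shape of $\mathrm{C}_{G}(v)$ from scratch and track the symplectic constraints on the top-left block. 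All three hypotheses --- $R$ a domain, $\mathrm{char}(R)\neq 2$, and $R^{\ast}=\{\pm 1\}$ --- enter essentially, since without any one of them additional elements of $\mathrm{C}_{G(R)}(v)$ appear that fail to commute with $\phi(a)$; this is precisely the obstruction handled by (\ref{mid}).
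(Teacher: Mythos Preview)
Your proof is correct and follows essentially the same route as the paper. For (\ref{eq1}) and (\ref{mid}) the paper uses exactly the witnesses $\xi$ and $h_{e_1+e_2}(t)$ that you do; for (\ref{eq2}) the paper writes out the centralizer shape as an explicit $6\times 6$ matrix (taking $m=3$) and extracts the constraints $2ax=2by=0$, $xy+ab=1$, whereas you package the same computation in the block decomposition over $I_1,I_2,I_3$ --- but the substance of the argument, including the case split forced by $R^{\ast}=\{\pm1\}$, is identical.
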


\begin{proof}
We have already established that $\mathrm{C}_{G}(\mathrm{C}_{G(R)}(v))\leq\pm
U_{12}\cdot\phi(\overline{k}).$ If $g$ is given by (\ref{gandphi}), both $c$
and $a$ appear as entries in the matrix $g$, so if $g\in G(R)$ then $a,c\in R$
and (\ref{eq1}) follows.

Suppose now that $R^{\ast}\neq\{\pm1\}$ and pick $t\in R^{\ast}$ with
$t^{2}\neq1$. The torus element%
\[
\tau:=h_{1,-2}(t)=t(e_{11}+e_{22})+t^{-1}(e_{-2,-2}+e_{-1,-1})
\]
lies in $\mathrm{C}_{G(R)}(v)$. So if $g$ in (\ref{gandphi}) is in
$\mathrm{Z}(\mathrm{C}_{G(R)}(v))$ then $\tau$ commutes with $\phi(a)$, and
hence with $\phi(a)-1=r(e_{1,-1}-e_{-2,2})$. But
\[
\tau^{-1}\cdot r(e_{1,-1}-e_{-2,2})\cdot\tau=t^{-2}re_{1,-1}-t^{2}re_{-2,2},
\]
so $t^{-2}r=t^{2}r=r,$ $r=0$ and we conclude that $g\in\pm U_{12}(R)$. This
proves (\ref{mid}).

Assume now that $R^{\ast}=\{\pm1\}$ and $\mathrm{char}(R)\neq2$. To establish
(\ref{eq2}) it will suffice to show that $e_{1,-1}-e_{-2,2}$ commutes with
every matrix in $\mathrm{C}_{G(R)}(v).$

For clarity we take $n=3$; the argument is valid for any $n\geq2$. A matrix
commuting with $v$ is of the form%
\[
g=\left[
\begin{array}
[c]{cccccc}%
x & \bullet & \bullet & \bullet & -b & \bullet\\
0 & x & 0 & b & 0 & 0\\
0 & \bullet & \bullet & \bullet & 0 & \bullet\\
0 & a & 0 & y & 0 & 0\\
-a & \bullet & \bullet & \bullet & y & \bullet\\
0 & \bullet & \bullet & \bullet & 0 & \bullet
\end{array}
\right]  ,
\]
where the blank entries are arbitrary. If $g$ is symplectic then%
\begin{align*}
2ax  &  =2by=0\\
xy+ab  &  =1.
\end{align*}
It follows that \emph{either }$x=0$, in which case $ab=1,$ whence $a=\pm1=b$
and $y=0$, \emph{or} $x\neq0$, in which case $a=0$, $xy=1$ and similarly then
$x=\pm1=y$ and $b=0$. Thus in any case $x=y$ and $a=b$. This now implies that
$g$ commutes with $e_{1,-1}-e_{-2,2}$.
\end{proof}

\medskip

\textbf{Remark}\emph{.} The precise nature of $\mathrm{Z}(\mathrm{C}%
_{G(R)}(v))$ in the remaining case where $R^{\ast}=1$ and $\mathrm{char}(R)=2$
we leave open.

\bigskip

\textbf{Case 2:} $G=D_{m}\leq\mathrm{O}_{2m}.$ \ In this case, $\varepsilon
=-1$ for all $i,j$. The root subgroups in $G$ are%
\[
U_{ij}=1+\overline{k}\alpha_{ij},~1\leq\left\vert i\right\vert <\left\vert
j\right\vert \leq m.
\]

\begin{lemma}
\label{D-dc}Let $G=D_{m}\leq\mathrm{O}_{2m}$. A witness set for the root group
$U_{12}$ is%
\begin{equation}
X_{3}=\left\{  1+\alpha_{ij}~\mid(i,j)\in S~\right\}  \label{Xthree}%
\end{equation}
where%
\begin{equation}
S=\left\{  (i,j)~\mid~3\leq\left\vert i\right\vert <\left\vert j\right\vert
\text{ or }i=1<\left\vert j\right\vert \right\}  \cup\{(-1,2)\}. \label{Slist}%
\end{equation}

\end{lemma}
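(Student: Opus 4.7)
The plan is to verify the two requirements for a witness set: (a) that each $1+\alpha_{ij}$ with $(i,j)\in S$ centralizes $U_{12}$, and (b) that any $g\in G(\overline{k})$ commuting with all these elements must lie in $U_{12}Z$. Both reduce to matrix computations with $\alpha_{ij}=e_{ij}-e_{-j,-i}$.

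For (a), it suffices to show that $\alpha_{12}$ and $\alpha_{ij}$ commute as matrices for each $(i,j)\in S$, equivalently that the corresponding roots $e_1-e_2$ and $\epsilon_i-\epsilon_j$ (with the convention $\epsilon_{-k}=-e_k$) have sum which is not a root of $D_m$. The three types of pairs in $S$ handle themselves: pairs with $3\le|i|<|j|$ give roots with support disjoint from $\{e_1,e_2\}$, so the sum involves four distinct indices; pairs $(1,j)$ give roots $e_1\pm e_{|j|}$ or $e_1\pm e_2$, whose sum with $e_1-e_2$ has a coefficient $2$ on $e_1$ (never a root of $D_m$); and $(-1,2)$ gives $-e_1-e_2$, with sum $-2e_2$.

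For (b), set $g=(g_{k\ell})$ and expand $g\alpha_{ij}=\alpha_{ij}g$ entrywise. Splitting by whether $(k,\ell)$ hits the active rows $\{i,-j\}$ or columns $\{j,-i\}$, one reads off eight block of conditions which amount to the following: columns $i$ and $-j$ of $g$ are supported on rows $\{i,-j\}$; rows $j$ and $-i$ are supported on columns $\{j,-i\}$; $g_{ii}=g_{jj}$ and $g_{-i,-i}=g_{-j,-j}$; and $g_{i,-j}=-g_{j,-i}$, $g_{-j,i}=-g_{-i,j}$. Intersecting over $(i,j)\in S$, the pairs $(1,\pm 2),\ldots,(1,\pm m)$ pin column $1$ to row $1$ alone and row $-1$ to column $-1$ alone; the pair $(-1,2)$ contributes that row $1$ is supported on columns $\{1,2\}$ and column $-1$ on rows $\{-1,-2\}$; and the pairs with $3\le|i|<|j|$ force every row and column with $|\cdot|\ge 3$ onto the diagonal. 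Combining the row and column supports, the only positions where $g$ may be nonzero are the diagonal together with $(1,2)$ and $(-2,-1)$. The collected equalities $g_{ii}=g_{jj}$ and $g_{-i,-i}=g_{-j,-j}$ then force a single scalar $x\in\overline{k}$ on the diagonal.

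Write $a=g_{12}$, $b=g_{-2,-1}$, so $g=xI+a\,e_{12}+b\,e_{-2,-1}$. Imposing the orthogonal relation $g^{\mathsf T}Jg=J$ with $J_{k\ell}=\delta_{k,-\ell}$: the $(1,-1)$-entry yields $x^{2}=1$, and the $(2,-1)$-entry yields $x(a+b)=0$, hence $b=-a$. Therefore
\[
g=xI+a(e_{12}-e_{-2,-1})=x\bigl(I+(a/x)\alpha_{12}\bigr)\in\{\pm I\}\cdot U_{12}\subseteq U_{12}Z,
\]
as required. The main obstacle is the bookkeeping in the intersection step, in particular making sure the claimed supports really collapse to just the diagonal plus the two extra positions; the small case $m=3$ needs a separate look, since then $S$ contains no pairs with $|i|\ge 3$, but the pairs $(1,\pm 2),(1,\pm 3),(-1,2)$ still suffice to produce exactly the same shape of $g$, so the orthogonality argument closes the case identically.
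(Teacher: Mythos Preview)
Your argument is correct and is precisely the ``relatively straightforward matrix calculation'' that the paper explicitly omits (see the sentence preceding the $\mathrm{SL}_n$ lemma). Your part (a) via root sums is a bit informal, but the commutation $\alpha_{12}\alpha_{ij}=\alpha_{ij}\alpha_{12}$ for $(i,j)\in S$ checks out directly; in part (b) your entrywise analysis of $g\alpha_{ij}=\alpha_{ij}g$ and the intersection over $S$ correctly reduces $g$ to $xI+ae_{12}+be_{-2,-1}$, and the orthogonality relation then forces $x^2=1$, $b=-a$ (the latter in fact already follows from the pair $(-1,2)$, which gives $g_{-2,-1}=-g_{1,2}$, so the orthogonality is needed only for $x=\pm1$). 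Your separate treatment of $m=3$ is appropriate and your conclusion $g\in\pm U_{12}$ also recovers the sharper statement \eqref{sharp} noted in the Remark.
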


\textbf{Remark} The same calculation actually establishes a little more:
namely,%
\begin{equation}
\mathrm{C}_{\mathrm{O}_{2m}}(X_{3})\subseteq\pm U_{12}. \label{sharp}%
\end{equation}
\ This will be used below.

\subsection*{Odd orthogonal groups}

Now we take $G=B_{m}\leq\mathrm{O}_{2m+1},$ and write elements of $G$ as
matrices%
\[
g=\left(
\begin{array}
[c]{cc}%
x & a\\
b^{T} & h
\end{array}
\right)  :=(x,a,b;h)
\]
where $x=x(g)\in\overline{k}$, $a=a(g)$ and $b=b(g)$ are in $\overline{k}%
^{2m}$ and $h=h(g)\in M_{2m}(\overline{k})$. For $h\in M_{2m}(\overline{k})$
we write%
\[
h^{\ast}=(1,0,0;h).
\]
The rows and columns are labelled $0,1,\ldots,m,-1,\ldots,-m.$

We begin with a couple of elementary observations.

\begin{lemma}
\label{BvsD}Let $g=(x,0,0;h)$. Then $g\in\mathrm{O}_{2m+1}(\overline{k})$ if
and only if $h\in\mathrm{O}_{2m}(\overline{k})$ and $x=\pm1$.
\end{lemma}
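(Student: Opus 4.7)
The lemma is essentially a triviality once the bilinear form preserved by $\mathrm{O}_{2m+1}$ is written down in the coordinates being used, so my plan is simply to exhibit that form in block-diagonal shape and read off the two separate conditions on the blocks.

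First I would recall the conventions of \cite{C}, \S 11.3 for the Chevalley realization of $B_m$: the $2m+1$ coordinates are labelled $0,1,\ldots,m,-1,\ldots,-m$, and the defining symmetric bilinear form $B$ has matrix $\mathrm{diag}(1,J)$, where $J$ is the symmetric form on the span of $\{e_{\pm 1},\ldots,e_{\pm m}\}$ used to define $\mathrm{O}_{2m}$ in Case 2 above. The key structural point is that the $0$-th coordinate is orthogonal to all others under $B$, which is precisely what makes the ``extra" coordinate decouple from the $D_m$ block.

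Next I would observe that a matrix of the form $g=(x,0,0;h)$ is block-diagonal with respect to the decomposition $\overline{k}\,e_0 \oplus \overline{k}^{2m}$, so
\[
g^{T} B\, g \;=\; \mathrm{diag}(x^{2},\, h^{T} J h).
\]
The condition $g\in \mathrm{O}_{2m+1}(\overline{k})$ amounts to $g^{T}B g = B$, which decouples into the two scalar/matrix equations $x^{2}=1$ and $h^{T} J h = J$, i.e.\ $x=\pm 1$ and $h\in \mathrm{O}_{2m}(\overline{k})$. The converse direction is immediate from the same displayed identity. There is no real obstacle; the only thing to verify is the normalization of $B$, which is standard.
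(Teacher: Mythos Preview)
Your proposal is correct and is exactly the elementary verification the paper has in mind; the paper itself offers no proof, listing Lemma~\ref{BvsD} among ``a couple of elementary observations.'' One small point: in Carter's normalization the Gram matrix is $\mathrm{diag}(2,J)$ rather than $\mathrm{diag}(1,J)$ (this is forced by the shape of the short-root elements $u_i(r)=1+r(2e_{i0}-e_{0,-i})-r^{2}e_{i,-i}$), which makes no difference to your computation except in characteristic~$2$, where the bilinear form degenerates and one should argue instead with the quadratic form $Q(v)=v_0^{2}+\sum_i v_i v_{-i}$---the same block decoupling then yields $x^{2}=1$ and $h\in\mathrm{O}_{2m}(\overline{k})$ as before.
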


\begin{lemma}
\label{starcom}Let $w\in M_{2m}(\overline{k})$. Then $g=g(x,a,b;h)$ commutes
with $w^{\ast}$ if and only if%
\begin{align*}
hw  &  =wh\\
aw  &  =a,~bw^{T}=b.
\end{align*}

\end{lemma}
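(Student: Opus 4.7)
The plan is to verify this lemma by a direct block-matrix calculation, using the block decomposition implicit in the notation $(x,a,b;h)$. Write
\[
g=\begin{pmatrix} x & a \\ b^{T} & h \end{pmatrix},\qquad w^{\ast}=\begin{pmatrix} 1 & 0 \\ 0 & w \end{pmatrix},
\]
where $a$ is a row and $b^{T}$ a column of length $2m$. I would simply compute both products.

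First I would compute
\[
g\cdot w^{\ast}=\begin{pmatrix} x & aw \\ b^{T} & hw \end{pmatrix},\qquad w^{\ast}\cdot g=\begin{pmatrix} x & a \\ wb^{T} & wh \end{pmatrix}
\]
by expanding the block products. Then equating the four blocks entry-by-entry, the $(1,1)$ blocks match automatically, the $(1,2)$ blocks give $aw=a$, the $(2,2)$ blocks give $hw=wh$, and the $(2,1)$ blocks give $wb^{T}=b^{T}$. Taking transposes in the last condition yields $bw^{T}=b$, which is the stated form. Conversely, if these three conditions hold, the four corresponding blocks of $gw^{\ast}$ and $w^{\ast}g$ agree, so the matrices commute.

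There is no real obstacle here: the lemma is a direct verification, and the only point worth flagging is the convention on row/column vectors (so that the transpose appears correctly in the condition $bw^{T}=b$). The lemma, together with Lemma \ref{BvsD}, will presumably be used in the sequel to transfer centralizer computations in $\mathrm{O}_{2m+1}$ to the already-treated case of $\mathrm{O}_{2m}$, by restricting attention to elements of the form $w^{\ast}$ and exploiting the established witness set $X_{3}$ for $D_{m}$ from Lemma \ref{D-dc} and the strengthening \eqref{sharp}.
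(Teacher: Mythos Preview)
Your proposal is correct. The paper itself states this lemma without proof as an ``elementary observation,'' and your direct block-matrix computation is precisely the intended verification; there is nothing more to it.
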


The root elements are%
\begin{align*}
u_{i}(r)  &  =1+r(2e_{i0}-e_{0,-i})-r^{2}e_{i,-i}\text{ \ \ (short roots),
\ }1\leq\left\vert i\right\vert \leq m\\
u_{ij}(r)  &  =1+r\alpha_{ij}\text{ \ (long roots), \ }~1\leq\left\vert
i\right\vert <\left\vert j\right\vert \leq m,
\end{align*}
where $\alpha_{ij}$ are as in (\ref{alpha}) with $\varepsilon=-1$ for all
pairs $i,j$.

Now let $r\neq0$ and consider the long root element $v^{\ast}=u_{12}(r)$ (so
$v$ is the corresponding root element in $D_{m}$). We have%
\[
\mathrm{C}_{G}(v^{\ast})\supseteq X_{3}^{\ast}%
\]
where $X_{3}$ is defined above (\ref{Xthree}). Now Lemma \ref{starcom}
implies: if $g=g(x,a,b;h)\in\mathrm{Z}(\mathrm{C}_{G}(v^{\ast}))$ then
$a\alpha_{ij}$ and $b\alpha_{ji}$ are zero for all pairs $(i,j)\in S$ (see
(\ref{Slist})). This now implies that $a=b=0$.

It follows by Lemma \ref{BvsD} that $x=\pm1$ and $h\in\mathrm{O}%
_{2m}(\overline{k}),$ and then by (\ref{sharp}) that
\[
h\in\pm U_{12}%
\]
(here $U_{12}$ is the corresponding root group in $D_{m}$).

Thus
\[
g=(\pm1,0,0;\pm(1+s\alpha_{12}))=\pm u_{12}(s)\cdot(\eta,1,\ldots1)
\]
for some $s\in\overline{k}$ and $\eta=\pm1$.

Finally, we note that $u_{1}(1)\in\mathrm{C}_{G}(v^{\ast})$. It follows that
$(\eta,1,\ldots1)$ commutes with $u_{1}(1),$ which forces $\eta=1.$ Thus
$g=\pm u_{12}(s).$ We have established

\begin{lemma}
Let $G=B_{m}\leq\mathrm{O}_{2m+1}$. A witness set for the long root group
$U_{12}$ is
\[
X_{4}=X_{3}^{\ast}\cup\{u_{1}(1)\}.
\]

\end{lemma}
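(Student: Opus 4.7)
The plan is to reduce the computation to the $D_m$ case already handled in Lemma \ref{D-dc}, using the block embedding $h\mapsto h^{\ast}$ of $\mathrm{O}_{2m}$ into $\mathrm{O}_{2m+1}$ set up at the start of the subsection. First I would check that $X_{4}\subseteq\mathrm{C}_{G(R)}(U_{12})$. That $X_{3}^{\ast}$ centralizes $U_{12}=\{1+r\alpha_{12}\}^{\ast}$ is immediate from Lemma \ref{starcom} combined with the fact, already recorded in the proof of Lemma \ref{D-dc}, that $X_{3}$ centralizes the $D_{m}$ root group $1+\overline{k}\alpha_{12}$. For $u_{1}(1)$, the short root $e_{1}$ and the long root $e_{1}-e_{2}$ produce none of $e_{1}+(e_{1}-e_{2})$, $2e_{1}+(e_{1}-e_{2})$ or $e_{1}+2(e_{1}-e_{2})$ as a root of $B_{m}$, so by the Chevalley commutator formula the corresponding root subgroups commute.

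For the reverse inclusion, take $g=(x,a,b;h)\in\mathrm{C}_{G}(X_{4})$. Commuting with each $w^{\ast}$, $w\in X_{3}$, Lemma \ref{starcom} forces $aw=a$ and $bw^{T}=b$, i.e.\ $a\alpha_{ij}=0=b\alpha_{ji}$ for every $(i,j)\in S$. The key bookkeeping step is to inspect the index set $S$ of (\ref{Slist}) and observe that the supports of these $\alpha_{ij}$ collectively cover every one of the $2m$ coordinates, forcing $a=b=0$. Then Lemma \ref{BvsD} gives $x=\pm1$ and $h\in\mathrm{O}_{2m}(\overline{k})$, and $h$ still centralizes $X_{3}$, so the sharpened conclusion (\ref{sharp}) of Lemma \ref{D-dc} yields $h\in\pm U_{12}$. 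Collecting signs, $g=\pm u_{12}(s)\cdot(\eta,1,\ldots,1)$ for some $s\in\overline{k}$ and $\eta\in\{\pm1\}$.

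Finally, the role of the extra generator $u_{1}(1)\in X_{4}$ is exactly to eliminate the residual sign: the diagonal matrix $(\eta,1,\ldots,1)$ must commute with $u_{1}(1)$, but a glance at the zeroth row of $u_{1}(1)=1+(2e_{10}-e_{0,-1})-e_{1,-1}$ shows that the $(0,-1)$ entry changes from $-1$ to $-\eta$ under conjugation by $(\eta,1,\ldots,1)$, which forces $\eta=1$. Hence $g=\pm u_{12}(s)\in U_{12}\,Z$, which is what Proposition \ref{dct} demands.

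The only genuine obstacle I anticipate is the coordinate-by-coordinate verification in the second paragraph that $S$ really annihilates every component of both $a$ and $b$; once that is settled, the passage from $D_{m}$ to $B_{m}$ is essentially a bordering-the-matrix argument, and the single extra witness $u_{1}(1)$ cleanly disposes of the sign ambiguity introduced by the orthogonal embedding.
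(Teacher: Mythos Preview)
Your proposal is correct and follows essentially the same route as the paper: reduce to the $D_m$ result via Lemma~\ref{starcom} and Lemma~\ref{BvsD}, invoke the sharpened form~(\ref{sharp}) to land in $\pm U_{12}$, and then use the extra witness $u_1(1)$ to kill the residual sign $\eta$. The only differences are cosmetic --- you make the inclusion $X_4\subseteq\mathrm{C}_{G(R)}(U_{12})$ and the coordinate argument for $a=b=0$ more explicit than the paper does.
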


Assume henceforth that $m\geq3$. We consider finally the short root group
$U_{1}$. We see that $\mathrm{C}_{G}(U_{1})$ contains the set%
\[
X_{5}=\left\{  u_{ij}(1)\mid~i\neq-1,~j\neq1\right\}  \cup\{u_{1}(1)\}.
\]
Now let $g=g(x,a,b;h)\in\mathrm{C}_{G}(X_{5})$. One finds after some
calculation that%
\[
g=(x,se_{-1},2e_{1};x\mathbf{1}_{2m}+ye_{1,-1}).
\]
(This calculation requires $m\geq3$; the conclusion is false when $m=2$).

Then $\det(g)=x^{2m+1}$ so $x$ is invertible; replacing $s$ by $-x^{-1}s$ and
$y$ by $x^{-1}y$ we have
\begin{equation}
g=x(1,-se_{-1},2se_{1};\mathbf{1}_{2m}+ye_{1,-1}). \label{specialg}%
\end{equation}
Then%
\[
g\cdot u_{1}(-s)=x(\mathbf{1}+(s^{2}+y)e_{1,-1})\in\mathrm{O}_{2m+1},
\]
which implies $x^{2}=1$ and $2(s^{2}+y)=0$.

If $\mathrm{char}(k)\neq2$ we infer that $g=\pm u_{1}(s).$

Suppose now that $k$ has characteristic $2$. In this case the mapping
$\pi:g\longmapsto h(g)$ is an injective homomorphism (\cite{C}, page 187). If
$g$ is of the form (\ref{specialg}) and $y=w^{2}$ then $g\pi=u_{1}(w)\pi\in
U_{1}\pi$, and so $g\in U_{1}$.

Thus is any case we have $g\in\pm U_{1}$. We have established

\begin{lemma}
Let $G=B_{m}\leq\mathrm{O}_{2m+1},$ where $m\geq3$. Then a witness set for the
short root group $U_{12}$ is%
\[
X_{5}=\left\{  u_{ij}(1)\mid~i\neq-1,~j\neq1\right\}  \cup\{u_{1}(1)\}.
\]

\end{lemma}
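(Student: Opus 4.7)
The plan is to verify the two defining properties of a witness set for $U_1$ (the short root group called $U_{12}$ in the statement): first, that $X_5 \subseteq \mathrm{C}_{G(R)}(U_1)$; second, that $\mathrm{C}_G(X_5) \subseteq U_1 Z$. The first property is an elementary matrix verification: for each long root element $u_{ij}(1) = 1 + \alpha_{ij}$ with $i \neq -1$ and $j \neq 1$, the Chevalley commutator formula shows that $\alpha_{ij}$ together with the short root $\varepsilon_1$ either does not add to a root or produces commutators with zero structure constants, so $[u_{ij}(1), u_1(r)] = 1$; and $u_1(1) \in U_1$ commutes with $U_1$ since root groups are abelian.

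For the harder direction, I would take $g = (x, a, b; h) \in \mathrm{C}_G(X_5)$ and apply Lemma~\ref{starcom} to each long root element $u_{ij}(1)^*$ in $X_5$, which gives the constraints $hw = wh$, $aw = a$, $bw^T = b$ for $w = 1 + \alpha_{ij}$. The assumption $m \geq 3$ provides enough pairs $(i,j)$ with $i \neq -1$ and $j \neq 1$ to force, by linear algebra, that $a$ is supported on $e_{-1}$, $b$ on $e_1$, and $h$ is a scalar matrix plus a multiple of $e_{1,-1}$. Combined with centralization by $u_1(1)$, one obtains
\[
g = x(1,\, -s e_{-1},\, 2s e_1;\, \mathbf{1}_{2m} + y e_{1,-1})
\]
for some scalars $s, y$ and invertible $x$ (since $\det g = x^{2m+1}$).

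Next, I would impose the orthogonal condition by computing $g \cdot u_1(-s) = x(\mathbf{1} + (s^2 + y) e_{1,-1})$, which must itself lie in $\mathrm{O}_{2m+1}$. This yields $x^2 = 1$ and $2(s^2 + y) = 0$. When $\mathrm{char}(k) \neq 2$ we conclude $y = -s^2$ and hence $g = \pm u_1(s) \in \pm U_1 \subseteq U_1 Z$, as desired.

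The main obstacle is characteristic $2$, where $2(s^2 + y) = 0$ is vacuous and does not determine $y$. Here I would appeal to the fact (\cite{C}, p.~187) that in characteristic $2$ the projection $\pi : g \mapsto h(g)$ is an injective homomorphism $\mathrm{O}_{2m+1} \hookrightarrow \mathrm{O}_{2m}$; membership of $g\pi$ in the image forces $y$ to be a square, say $y = s^2$, giving $g\pi = u_1(s)\pi$ and hence $g = u_1(s) \in U_1$. Note that the hypothesis $m \geq 3$ enters precisely in the linear-algebra step that pins down $a$, $b$, and $h$ — for $m = 2$ the list of available $(i,j)$ is too short to force the required form of $g$, consistent with the statement.
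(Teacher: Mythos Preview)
Your proposal is correct and follows essentially the same line as the paper's own argument: reduce $g$ via Lemma~\ref{starcom} and the long-root witnesses to the form $x(1,-se_{-1},2se_{1};\mathbf{1}_{2m}+ye_{1,-1})$, then multiply by $u_{1}(-s)$ to extract $x^{2}=1$ and $2(s^{2}+y)=0$, and finish via the injective projection $\pi$ in characteristic~$2$. One small correction of phrasing: in characteristic~$2$ the fact that $y$ is a square is simply because you are working over the algebraically closed field $\overline{k}$, not because ``membership of $g\pi$ in the image'' imposes it; once $y=w^{2}$ you get $g\pi=u_{1}(w)\pi$ and injectivity of $\pi$ then forces $g=u_{1}(w)$ (whence automatically $w=s$).
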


\section{Adelic groups}

Let $\mathbb{A}$ denote the ad\`{e}le ring of a global field $K$, with
$\mathrm{char}(K)\neq2,3,5$. We consider subrings of $\mathbb{A}$ of the
following kind:%
\[
A=\mathbb{A},~~A=~\prod_{\mathfrak{p}\in\mathcal{P}}\mathfrak{o}%
_{\mathfrak{p}}%
\]
where $\mathfrak{o}$ is the ring of integers of $K$ and $\mathcal{P}$ is a
non-empty set of primes (or places) of $K$. Here we establish

\begin{theorem}
\label{sl2}The ring $A$ is bi-interpretable with each of the groups
$\mathrm{SL}_{2}(A)$, $\mathrm{SL}_{2}(A)/\left\langle -1\right\rangle ,$
$\mathrm{PSL}_{2}(A)$.
\end{theorem}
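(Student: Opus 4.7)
The plan is to extend the bi-interpretation scheme of Section~\ref{bisection} to the adelic ring $A$, following the pattern of Theorem~\ref{thm:bi-interpretation} but with two essential modifications. First, $A$ is not an integral domain; it carries a rich Boolean algebra of idempotents, so the double-centralizer argument of Theorem~\ref{thm:double centralizer} is not directly available. Second, the rank-one groups $\mathrm{SL}_2,\ \mathrm{SL}_2/\langle -1\rangle,\ \mathrm{PSL}_2$ are not covered by Theorem~\ref{thm:double centralizer} at all. The higher-rank case of the theorem should reduce to Theorem~\ref{thm:bi-interpretation} applied componentwise via the idempotent structure, so the main content is the rank-one situation.

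My first step would be to identify the root subgroup $U(A) = \{x_\alpha(a):a\in A\}$ of upper-triangular unipotents inside $\Gamma=\mathrm{SL}_2(A)$ as a definable subset. The naive double-centralizer fails: for $u_0 = x_\alpha(1)$ one computes
\[
\mathrm{C}_\Gamma(u_0) = \{(\epsilon,b;0,\epsilon) : \epsilon^2 = 1,\ b\in A\},
\]
and $A$ admits many square roots of unity, one per choice of sign in each local factor. Instead, using that $A$ is reduced and $\mathrm{char}(K)\neq 2$, I would show
\[
U(A) = \{g \in \Gamma : g u_0 = u_0 g \text{ and } (g-I)^2 = 0\};
\]
the commuting condition forces $g=(\epsilon,b;0,\epsilon)$, and then $(g-I)^2=0$ gives $(\epsilon-1)^2=0$, which implies $\epsilon=1$ because $A$ has no nilpotents. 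This formula defines $U(A)$ with $u_0$ as parameter. The variants $\mathrm{SL}_2(A)/\langle -1\rangle$ and $\mathrm{PSL}_2(A)$ are handled by the quotient-interpretation device already used at the end of Section~\ref{bisection}.

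The second step is to interpret the ring $A$ in $\Gamma$. Identifying $U(A)$ with $(A,+)$ via $a\mapsto x_\alpha(a)$ gives addition. For multiplication I would exploit the torus action: conjugation by $h_\alpha(\lambda)$ with $\lambda\in A^*$ sends $x_\alpha(a)$ to $x_\alpha(\lambda^2 a)$, so multiplication by any square in $A^*$ is definable. To reach arbitrary products I would combine this with Steinberg's expression
\[
h_\alpha(\lambda) = x_\alpha(\lambda)\,x_{-\alpha}(-\lambda^{-1})\,x_\alpha(\lambda)\cdot x_\alpha(1)\,x_{-\alpha}(-1)\,x_\alpha(1),
\]
valid for $\lambda\in A^*$, together with the fact that $A^*$ is rich in the adelic setting (units with prescribed local behaviour exist). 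The characteristic restriction $\mathrm{char}(K)\neq 2,3,5$ ensures that the elementary torus arguments of Proposition~\ref{prop:torus witness with units} can be run uniformly across all local factors.

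The third step is to close the bi-interpretation exactly as in Section~\ref{bisection}: interpret $\Gamma$ in $A$ tautologically via $2\times 2$ matrices with determinant $1$, and check definability of the mutual isomorphisms $\theta$ and $\psi$ by the arguments of Lemmas~\ref{def_theta} and \ref{def_psi}, with $U_0$ replaced by the $U(A)$ defined above. The main obstacle is the second step: without a Chevalley commutator formula of a rank-$\geq 2$ group to produce multiplication as in Lemma~\ref{defmult}, one must extract multiplication purely from the $\mathrm{SL}_2$-torus action, and verify carefully that the non-integral-domain nature of $A$ (in particular, the failure of every element of $A^*$ to be a square) does not obstruct the first-order description of these operations.
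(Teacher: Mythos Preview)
Your first step contains a fatal error: the condition $(g-I)^2=0$ is a statement about matrices over $A$, not a formula in the language of groups. At this stage of the argument you have only the abstract group $\Gamma$; you cannot subtract the identity, form $g-I$, or test nilpotence of a matrix until \emph{after} the ring has been interpreted. The whole point of defining $U(A)$ is to bootstrap the ring structure, so you may not presuppose it. Nor is there an obvious group-theoretic surrogate: the centralizer $\mathrm{C}_\Gamma(u_0)$ consists of all $(\epsilon,b;0,\epsilon)$ with $\epsilon^2=1$, and squaring such an element gives $(1,2\epsilon b;0,1)$, which lands in $U(2A)$ rather than $U(A)$ when $\mathcal{P}$ contains primes above $2$.

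The paper avoids this entirely. It never tries to cut $U(A)$ out of a centralizer. Instead it first defines the diagonal torus $H=h(A^\ast)$ as $\mathrm{C}_\Gamma(h(\tau))$ for a carefully chosen unit $\tau$, and then uses an arithmetic lemma: every element of $A$ can be written as $\xi^2-\eta^2+s$ with $\xi,\eta\in A^\ast$ and $s$ drawn from a fixed finite set $S$ (this is where $\mathrm{char}(K)\neq 2,3,5$ enters). Since $u(1)^{h(\xi)}=u(\xi^2)$, one obtains
\[
U(A)=\bigcup_{s\in S}\{\,u^{x}u^{-y}u(s)\ :\ x,y\in H\,\},
\]
a purely group-theoretic definition with finitely many parameters. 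The same decomposition then yields the multiplication formula, making precise what you only gesture at with ``$A^\ast$ is rich''.

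Your third step also underestimates the work required. Lemma~\ref{def_theta} gives definability of $\theta$ only on root subgroups; to extend it to all of $\Gamma$ one needs a bounded-length factorisation. The paper obtains this by introducing the definable set $W$ of elements that are componentwise either $1$ or $w=\left(\begin{smallmatrix}0&1\\-1&0\end{smallmatrix}\right)$, and showing that for every $g\in\Gamma$ some $x\in W$ pushes $gx$ into the big Bruhat cell $\Gamma_1=VHU$. This componentwise Bruhat trick is the substitute for finite elementary width in the adelic setting, and it is not supplied by the arguments of Section~\ref{bisection}.
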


\begin{theorem}
\label{high_rank}Let $G$ be a simple Chevalley-Demazure group scheme of rank
at least $2$. Then $A$ is bi-interpretable with the group $G(A)$.
\end{theorem}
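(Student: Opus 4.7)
The plan is to adapt the machinery of \S\ref{dcanddefrt} and \S\ref{bisection} to the product setting, exploiting the fact that $A$ decomposes as a (restricted, in the ad\`ele case) product of local rings $A = \prod_{\mathfrak{p}} A_{\mathfrak{p}}$, where each $A_\mathfrak{p}$ is either a complete discrete valuation ring $\mathfrak{o}_\mathfrak{p}$ or a local field $K_\mathfrak{p}$, and hence an integral domain. Correspondingly $G(A) = \prod_\mathfrak{p} G(A_\mathfrak{p})$ (restricted in the ad\`ele case), and centralizers in $G(A)$ decompose componentwise because the restricted-product condition is preserved under centralizing.

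First I would establish that $U_\alpha(A)\,\mathrm{Z}(G(A))$ is definable in $G(A)$ for each root $\alpha$. Since $\mathrm{char}(K)\neq 2$, the units condition $|A_\mathfrak{p}^{\ast}|\geq 4$ holds at every place, and the exceptional short-root hypothesis $A_\mathfrak{p}^{\ast}=\{\pm 1\}$ of Theorem~\ref{thm:double centralizer} never occurs. Taking $u = x_\alpha(1)\in G(A)$, the double centralizer computed factorwise via Theorem~\ref{thm:double centralizer} yields $\mathrm{C}_{G(A)}\mathrm{C}_{G(A)}(u) = U_\alpha(A)\,\mathrm{Z}(G(A))$. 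The $\mathrm{Sp}_4$-exception in Corollary~\ref{defibl} does not occur here because $A/2A$ is finite (only the factor at $\mathfrak{p}\mid 2$ contributes). If $G$ is not of type $C_n$, one recovers $U_\alpha(A)$ itself from $U_\alpha(A)\mathrm{Z}(G(A))$ by the case analysis in the proof of Corollary~\ref{defibl}; in the $C_n$ short-root case one works modulo the centre throughout, as in the final subsection of \S\ref{bisection}.

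The interpretation of $A$ in $G(A)$ then proceeds exactly as in \S\ref{bisection}: Chevalley commutator relations are valid over any commutative ring, so Lemmas~\ref{comp}--\ref{defmult} give a definable ring structure on $R' := \widetilde{U_{\alpha_0}(A)}$, the image of a fixed root subgroup modulo centre, together with an identification $A \cong R'$ via $r\mapsto\widetilde{x_{\alpha_0}(r)}$. The interpretation of $G(A)$ inside $A$ uses the polynomial equations (\ref{Zariski}) to define $G(A)\subseteq M_d(A)$ in $L_{\mathrm{rg}}$, with the group operation being matrix multiplication; this step carries over verbatim since it uses no hypothesis on $A$ beyond commutativity.

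The main obstacle is showing that the group isomorphism $\theta : G(A)\to G(R')\subseteq M_d(R')$ is definable. On each $U_\alpha(A)$ it is handled as in Lemma~\ref{def_theta}, hence on any product $X_N$ of at most $N$ elementary root elements. The delicate point is extending $\theta$ definably to all of $G(A)$: over a product of rings one cannot expect finite elementary width a priori. In the adjoint case $\mathrm{Z}(G(A))=1$, and the centralizer characterisation of \S\ref{bisection} based on equation (\ref{centre}), combined with a uniform bound $L$ on the number of root elements needed to express each commutator $[x_\alpha(1),g]$ (Sprindžuk's bound from \cite{Sp} is uniform in the ring, depending only on the root system), characterises $\theta$ through the conditions $x_{\alpha_i}(1)^g = x_i\in X_N$ and $x_i\theta = v_i^h$. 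For non-adjoint $G$ one passes throughout to the quotient $G(A)/\mathrm{Z}(G(A))$ and lifts definability as in \S\ref{bisection}. Verifying that the Sprindžuk-type bound applies uniformly across every $A_\mathfrak{p}$, and that the Proposition~\ref{gvr2}-style compatibility holds for the restricted ad\`ele product itself so that the bi-interpretation is genuinely first-order expressible, is the most delicate part of the argument.
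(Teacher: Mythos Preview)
Your overall framework is correct and largely matches the paper's: the double-centralizer computation goes through factorwise since each $\mathfrak{o}_{\mathfrak{p}}$ (or $K_{\mathfrak{p}}$) has infinitely many units, the proof of Corollary~\ref{defibl} then recovers $U_{\alpha}(A)$ itself (noting $A/2A$ is finite), and the interpretation of $G(A)$ in $A$ via (\ref{Zariski}) carries over verbatim. A minor difference: the paper defines the ring structure on $U_{\gamma}(A)$ by restricting to the definable subgroup $K_{\gamma}=\varphi_{\gamma}(\mathrm{SL}_{2}(A))$ and invoking Proposition~\ref{P1}, rather than via Lemmas~\ref{comp}--\ref{defmult}; either route works.

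The real divergence is at the definability of $\theta$. You assert that finite elementary width ``cannot be expected a priori'' over a product ring, and fall back on the adjoint-case commutator argument of \S\ref{bisection} together with the bound from \cite{Sp} (Stepanov, not Sprind\v{z}uk). But the paper simply \emph{proves} finite elementary width for $G(A)$: each local factor is a PID or a field, so by Steinberg (\cite{St}, Cor.~2 to Thm.~18) $G(\mathfrak{o}_{\mathfrak{p}})$ is a product of boundedly many elements from the subgroups $K_{\alpha}$, with a bound depending only on $\Phi$; combined with Lemma~\ref{Klemma}, which gives $K_{\alpha}=(U_{-\alpha}U_{\alpha})^{4}$, this yields a uniform decomposition $G(A)=\prod_{i=1}^{N}U_{\beta_{i}}(A)$. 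With that in hand, $\theta$ is definable directly on all of $G(A)$, regardless of isogeny type.

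Your alternative route has a genuine gap for non-adjoint $G$. The argument based on (\ref{centre}) only pins down $g\theta$ modulo $Z(G(A'))$ when $Z\neq 1$. You say ``pass to the quotient and lift as in \S\ref{bisection}'', but the final subsection of \S\ref{bisection} performs no such lift: it merely replaces $R'$ by its image modulo centre, and the definability of $\theta$ there still rests on one of the two standing hypotheses (adjoint, or finite elementary width). Since $Z(A)=\prod_{\mathfrak{p}}Z(A_{\mathfrak{p}})$ is typically infinite, there is no evident way to recover $\theta$ from its reduction modulo centre. The paper's finite-elementary-width argument sidesteps this difficulty entirely and handles all isogeny types uniformly.
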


For a rational prime $p$ we write $A_{p}=\prod_{\mathfrak{p}\in\mathcal{P}%
,~\mathfrak{p}\mid p}\mathfrak{o}_{\mathfrak{p}}$.

\begin{lemma}
\label{S-lemma}$A$ has a finite subset $S$ such that every element of $A$ is
equal to one of the form%
\begin{equation}
\xi^{2}-\eta^{2}+s \label{formula}%
\end{equation}
with $\xi,\eta\in A^{\ast}$ and $s\in S$.
\end{lemma}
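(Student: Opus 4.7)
The plan is to reduce the statement to a local analysis at each place of $K$ and then to assemble finitely many corrections at the finitely many `bad' places into one global finite set $S$. Writing $D = \{\xi^2 - \eta^2 \mid \xi, \eta \in A^*\}$ and, for each non-archimedean place $v$, $D_v = \{\xi^2 - \eta^2 \mid \xi, \eta \in \mathfrak{o}_v^*\} \subseteq \mathfrak{o}_v$, the goal becomes to exhibit a finite $S \subseteq A$ with $A = D + S$.

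The first step is local. For a finite place $v$ with residue field $\mathbb{F}_{q_v}$ of residue characteristic $\neq 2$ and $q_v \geq 7$, a short counting argument shows that every $c \in \mathbb{F}_{q_v}$ is a difference of two non-zero squares: for $c \neq 0$, writing $c = uw$ with $u = \bar\xi + \bar\eta$, $w = \bar\xi - \bar\eta$, of the $q_v - 1$ factorisations of $c$ at most four satisfy $u = \pm w$, leaving at least $q_v - 5 \geq 2$ valid ones; the case $c = 0$ is handled by $\bar\xi = \bar\eta$. Lifting a suitable $\bar\eta \in \mathbb{F}_{q_v}^*$ to $\eta \in \mathfrak{o}_v^*$ and applying Hensel's lemma (residue characteristic $\neq 2$) to $\xi^2 = a + \eta^2$ then yields $D_v = \mathfrak{o}_v$. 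The same factoring identity shows that any field of characteristic $\neq 2$ equals $\{\xi^2 - \eta^2 : \xi, \eta \in K^*\}$, so the analogue holds at every place when $\mathfrak{o}_v^*$ is replaced by $K_v^*$.

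The hypothesis $\mathrm{char}(K) \neq 2, 3, 5$ is used precisely to guarantee that only finitely many finite places $v$ lie above $2, 3$ or $5$; call this set $B_0$. At each $v \in B_0$ the set $D_v$ is a proper subset of $\mathfrak{o}_v$ but of finite additive index, which I verify case by case from the standard description of unit squares. For instance $(\mathbb{Z}_2^*)^2 = 1 + 8\mathbb{Z}_2$ gives $D_2 = 8\mathbb{Z}_2$ of index $8$; in $\mathbb{Z}_3$ the unit squares are $1 + 3\mathbb{Z}_3$, so $D_3 = 3\mathbb{Z}_3$; in $\mathbb{Z}_5$ the residues of $\xi^2 - \eta^2$ fill $\{0, 2, 3\} \pmod 5$, so already $D_5 + \{0, 1\} = \mathbb{Z}_5$; and the corresponding statements at ramified extensions follow similarly. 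For each $v \in B_0$ fix a finite $S_v \subseteq \mathfrak{o}_v$ with $\mathfrak{o}_v = D_v + S_v$.

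Finally, define $S \subseteq A$ to be the set of those elements whose $v$-component lies in $S_v$ for $v \in B_0$ and vanishes for $v \notin B_0$; this is a finite subset of $A$ of cardinality $\prod_{v \in B_0} |S_v|$. Given $a \in A$, pick $s \in S$ so that $a_v - s_v \in D_v$ for every $v \in B_0$. Outside $B_0$ the local analysis provides $\xi_v, \eta_v \in \mathfrak{o}_v^*$ with $\xi_v^2 - \eta_v^2 = a_v - s_v$ whenever $a_v \in \mathfrak{o}_v$ (which is the case for almost all $v$), and $\xi_v, \eta_v \in K_v^*$ at the remaining finitely many places (relevant only when $A = \mathbb{A}$). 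The assembled tuples $\xi = (\xi_v)$ and $\eta = (\eta_v)$ are then id\`{e}les, so $\xi, \eta \in A^*$ and $a - s = \xi^2 - \eta^2$. I expect the main technical obstacle to be the bookkeeping at the $2$-adic places, where unit squares are restricted modulo $8$ (or modulo a higher power of the uniformiser in ramified extensions), and pinning down the explicit $S_v$ there requires a little extra care.
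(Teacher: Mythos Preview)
Your argument is correct and follows the same local-to-global strategy as the paper: show $D_v=\mathfrak{o}_v$ at places with odd residue characteristic and $q_v>5$ via the factorisation $c=(\bar\xi+\bar\eta)(\bar\xi-\bar\eta)$ plus Hensel, handle the finitely many remaining places with a finite local correction set, and assemble. The only cosmetic differences are that the paper fixes $s_v=1$ rather than $0$ at the good places, and that your phrase ``finite additive index'' is slightly loose (as your own $\mathbb{Z}_5$ example shows, $D_v$ need not be a subgroup); what you actually use, and what matters, is the existence of a finite $S_v$ with $D_v+S_v=\mathfrak{o}_v$.
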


\begin{proof}
In any field of characteristic not $2$ and size $>5$, every element is the
difference of two non-zero squares. It follows that the same is true for each
of the rings $\mathfrak{o}_{\mathfrak{p}}$ with $N(\mathfrak{p})>5$ and odd.

If $N(\mathfrak{p})$ is $3$ or $5$ then every element of $\mathfrak{o}%
_{\mathfrak{p}}$ is of the form (\ref{formula}) with $\xi,\eta\in
\mathfrak{o}_{\mathfrak{p}}^{\ast}$ and $s\in\{0,\pm1\}.$ If $\mathfrak{p}$
divides $2$, the same holds if $S$ is a set of representatives for the cosets
of $4\mathfrak{p}$ in $\mathfrak{o}$.

Now by the Chinese Remainder Theorem (and Hensel's lemma) we can pick a finite
subset $S_{1}$ of $A_{2}\times A_{3}\times A_{5}$ such that every element of
$A_{2}\times A_{3}\times A_{5}$ is of the form (\ref{formula}) with $\xi
,\eta\in\mathfrak{o}_{\mathfrak{p}}^{\ast}$ and $s\in S_{1}$. Finally, let $S$
be the subset of elements $s\in A$ that project into $S_{1}$ and have
$\mathfrak{o}_{\mathfrak{p}}$-component $1$ for all $\mathfrak{p}\nmid30$
(including infinite places if present).
\end{proof}

\medskip

\textbf{Remark} If $K=\mathbb{Q}$ one could choose $S\subset\mathbb{Z}$
(diagonally embedded in $A$). The plethora of parameters in the following
argument can then be replaced by just three - $h(\tau),~u(1),$ $v(1)$ - or
even two when $A=\mathbb{A}$, in which case we replace $h(\tau)$ by $h(2),$
which can be expressed in terms of $u(1)$ and $v(1)$ by the formula
(\ref{h-identity}) below. Also the formula (\ref{P-formula}) can be replaced
by the simpler one: $y_{2}=u^{x}u^{-y}u^{s}\wedge y_{3}=y_{1}^{x}y_{1}%
^{-y}y_{1}^{s}$.

\medskip

For a finite subset $T$ of $\mathbb{Z}$ let
\[
A_{T}=\left\{  r\in A~\mid~r_{\mathfrak{p}}\in T\text{ for every }%
\mathfrak{p}\right\}  .
\]
This is a definable set, since $r\in A_{T}$ if and only if $f(r)=0$ where
$f(X)=\prod_{t\in T}(X-t)$.

\medskip

Choose $S$ as in Lemma \ref{S-lemma}, with $0,~1\in S$, and write
$S^{2}=S\cdot S$.

\medskip

Let $\Gamma=\mathrm{SL}_{2}(A)/Z$ where $Z$ is $1$, $\left\langle
-1\right\rangle $ or the centre of $\mathrm{SL}_{2}(A)$. \ For $\lambda\in A$
write%
\[
u(\lambda)=\left(
\begin{array}
[c]{cc}%
1 & \lambda\\
0 & 1
\end{array}
\right)  ,~v(\lambda)=\left(
\begin{array}
[c]{cc}%
1 & 0\\
-\lambda & 1
\end{array}
\right)  ,~~h(\lambda)=\left(
\begin{array}
[c]{cc}%
\lambda^{-1} & 0\\
0 & \lambda
\end{array}
\right)  ~~(\lambda\in A^{\ast})
\]
(matrices interpreted modulo $Z$; note that $\lambda\longmapsto u(\lambda)$ is
bijective for each choice of $Z$).

Fix $\tau\in A^{\ast}$ with $\tau_{\mathfrak{p}}=2$ for $\mathfrak{p}\nmid2,$
$\tau_{\mathfrak{p}}=3$ for $\mathfrak{p}\mid2$. It is easy to verify that%
\begin{equation}
\mathrm{C}_{\Gamma}(h(\tau))=h(A^{\ast}):=H. \label{Hdef}%
\end{equation}

\begin{proposition}
\label{P1}The ring $A$ is definable in $\Gamma.$
\end{proposition}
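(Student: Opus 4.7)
The plan is to interpret the ring $A$ in $\Gamma$ by realizing its additive group as the root subgroup $U = \{u(\lambda):\lambda\in A\}$, and then equipping $U$ with a definable multiplication inherited from the conjugation action of a suitable torus.

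The first step will be to use the identity $\mathrm{C}_{\Gamma}(h(\tau))=H=h(A^{\ast})$ from (\ref{Hdef}), giving a definable torus from the single parameter $h(\tau)$. A direct matrix calculation shows that conjugation by $h(\xi)\in H$ acts on $u(\lambda)$ by $u(\lambda)\mapsto u(\xi^{2}\lambda)$, so the orbit $u(1)^{H}$ is precisely the set $\{u(\xi^{2}):\xi\in A^{\ast}\}$ of ``unit-square'' root elements, definable from the parameters $h(\tau)$ and $u(1)$.

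Next I will promote this to all of $U$ by invoking Lemma \ref{S-lemma}. Taking as additional parameters the finitely many elements $u(s)$, $s\in S$, the set
\[
U=\bigcup_{s\in S}\,u(1)^{H}\cdot\bigl(u(1)^{H}\bigr)^{-1}\cdot u(s)
\]
coincides with $u(A)$ by Lemma \ref{S-lemma}, and the map $\lambda\mapsto u(\lambda)$ is a bijection identifying $U$ with the additive group of $A$. Addition on $A$ is then simply the group operation in $U$, which is automatic.

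The real work is to define multiplication. The key fact is that conjugation by $h(\xi)\in H$ realizes the scalar action $\lambda\mapsto\xi^{2}\lambda$ on $U$. Decomposing $\lambda=\xi^{2}-\eta^{2}+s$ via Lemma \ref{S-lemma}, I want to express
\[
u(\lambda\mu)=u(\mu)^{h(\xi)}\cdot\bigl(u(\mu)^{h(\eta)}\bigr)^{-1}\cdot u(s\mu),
\]
so that the graph of multiplication in $A$ can be captured by the type of formula previewed in the Remark following Lemma \ref{S-lemma}, roughly $y_{2}=u^{x}u^{-y}u^{s}\wedge y_{3}=y_{1}^{x}y_{1}^{-y}y_{1}^{s}$. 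The main obstacle is the last factor $u(s\mu)$: for $s\in S$ not a unit square, no single element of $H$ conjugates $u(\mu)$ to $u(s\mu)$. To overcome this I plan to use the second root subgroup $V=v(A)$ (defined symmetrically using $v(1)$ as a parameter) together with Weyl-element conjugations arising from the $\mathrm{SL}_{2}$-relation between $u$ and $v$; this provides enough additional definable scalar actions to cover multiplication by each of the finitely many $s\in S$. Since $S$ is finite, all these cases can be combined into a single first-order formula, yielding the definable ring structure on $U$ and hence the desired interpretation of $A$ in $\Gamma$.
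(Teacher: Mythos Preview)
Your setup through the definition of $H$ and $U$ is exactly the paper's approach, and you correctly isolate the one non-trivial point: how to define $u(s\mu)$ for $s\in S$ not of the form $\xi^{2}$.

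The gap is in your proposed fix. Bringing in $V=v(A)$ and Weyl conjugation does not produce any new scalar actions on $U$: the Weyl element $w$ swaps $U$ and $V$, and conjugation by elements of $V$ does not even normalise $U$, so you cannot manufacture the map $u(\mu)\mapsto u(s\mu)$ this way. All scalar actions on $U$ obtainable by conjugation in $\mathrm{SL}_{2}(A)$ come from the torus $H$ and are therefore multiplication by unit squares; that is precisely the obstruction you identified, and $V$ does nothing to remove it.

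The paper's solution is simpler and purely additive: decompose the \emph{other} factor as well. Writing $\mu=\zeta^{2}-\rho^{2}+t$ with $\zeta,\rho\in A^{\ast}$ and $t\in S$, one has
\[
u(s\mu)=u(s)^{h(\zeta)}\,u(s)^{-h(\rho)}\,u(st),
\]
so that $u(\lambda\mu)=u(\mu)^{h(\xi)}u(\mu)^{-h(\eta)}u(s)^{h(\zeta)}u(s)^{-h(\rho)}u(st)$. This costs nothing beyond enlarging the parameter set to $\{u(c):c\in S^{2}\}$, and turns the graph of multiplication into the first-order formula (\ref{P-formula}). Replace your last paragraph with this double decomposition and the argument goes through.
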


\begin{proof}
We take $h:=h(\tau)$ and $\{u(c)~\mid~c\in S^{2}\}$ as parameters, and put
$u:=u(1)$. `Definable' will mean definable with these parameters. For
$\lambda\in A$ and $\mu\in A^{\ast}$ we have%
\[
u(\lambda)^{h(\mu)}=u(\lambda\mu^{2}).
\]

Now (\ref{Hdef}) shows that $H$ is definable. If $\lambda=\xi^{2}-\eta^{2}+s$
and $x=h(\xi),~y=h(\eta)$ then $u(\lambda)=u^{x}u^{-y}u(s)$; thus%
\[
U:=u(A)=\bigcup_{s\in S}\{u^{x}u^{-y}u(s)~\mid~x,~y\in H\}
\]
is definable.

The map $u:A\rightarrow U$ is an isomorphism from $(A,+)$ to $U$. It becomes a
ring isomorphism with multiplication $\ast$ if one defines%
\begin{equation}
u(\beta)\ast u(\alpha)=u(\beta\alpha). \label{thing}%
\end{equation}
We need to provide an $L_{\mathrm{gp}}$ formula $P$ such that for
$y_{1},~y_{2},~y_{3}\in U$,%
\begin{equation}
y_{1}\ast y_{2}=y_{3}\Longleftrightarrow\Gamma\models P(y_{1},y_{2},y_{3}).
\label{nextthing}%
\end{equation}

Say $\alpha=\xi^{2}-\eta^{2}+s$, $\beta=\zeta^{2}-\rho^{2}+t$. Then%
\[
u(\beta\alpha)=u(\beta)^{x}u(\beta)^{-y}u(s)^{z}u(s)^{-r}u(st)
\]
where $x=h(\xi),~y=h(\eta),~z=h(\zeta)~$and $r=h(\rho)$.

So we can take $P(y_{1},y_{2},y_{3})$ to be a formula expressing the
statement: there exist $x,~y,z,r\in H$ such that for some $s,t\in S$%
\begin{align}
y_{1}  &  =u^{z}u^{-r}u(t),~y_{2}=u^{x}u^{-y}u(s),\label{P-formula}\\
y_{3}  &  =y_{1}^{x}y_{1}^{-y}u(s)^{z}u(s)^{-r}u(st).\nonumber
\end{align}

\end{proof}

\begin{proposition}
The group $\Gamma$ is interpretable in $A$.
\end{proposition}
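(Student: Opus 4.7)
The plan is to give a direct interpretation of $\Gamma$ in $A$ using the standard affine realization of $\mathrm{SL}_2$ as a group scheme. I identify $\mathrm{SL}_2(A)$ with the $L_{\mathrm{rg}}$-definable subset
\[
X = \{(a,b,c,d) \in A^4 \mid ad - bc = 1\} \subseteq A^4,
\]
carrying the binary operation
\[
(a,b,c,d)\cdot(a',b',c',d') = (aa'+bc',\, ab'+bd',\, ca'+dc',\, cb'+dd'),
\]
which is manifestly given by polynomial formulas and so is $L_{\mathrm{rg}}$-definable. This already gives an interpretation of $\mathrm{SL}_2(A)$ in $A$, treating the case $Z=1$.

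To obtain the other two cases I define an $L_{\mathrm{rg}}$-definable equivalence relation $E_Z$ on $X$ adapted to $Z$. For $Z = \langle -I\rangle$, I take $E_Z$ to be
\[
(a,b,c,d)\,E_Z\,(a',b',c',d')\ \Longleftrightarrow\ (a',b',c',d') = (a,b,c,d)\ \lor\ (a',b',c',d') = -(a,b,c,d).
\]
For $Z$ the full centre of $\mathrm{SL}_2(A)$ — which, for any commutative ring $R$, consists precisely of the scalar matrices $\lambda I$ with $\lambda\in R$, $\lambda^2=1$, a set that can be large when $A$ is a product — I take
\[
(a,b,c,d)\,E_Z\,(a',b',c',d')\ \Longleftrightarrow\ \exists\lambda\in A\bigl(\lambda^2=1 \wedge (a',b',c',d') = \lambda\cdot(a,b,c,d)\bigr).
\]
In each case $E_Z$ is $L_{\mathrm{rg}}$-definable, and since we are quotienting by a central subgroup the multiplication above descends to a well-defined operation on $X/E_Z$, giving the required interpretation of $\Gamma = \mathrm{SL}_2(A)/Z$ in $A$ in the sense of \cite{HMT}, \S 5.3.

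There is no genuine obstacle in this direction: the content of Theorem \ref{sl2} lies in the \emph{other} half of the bi-interpretation (already handled in Proposition \ref{P1}), where one must recover the ring $A$ inside the group $\Gamma$. The present proposition is just the observation that the defining polynomial equations for $\mathrm{SL}_2$ as a group scheme, together with the polynomial equations defining the centre, are $L_{\mathrm{rg}}$-formulas, so that all of $\mathrm{SL}_2(A)$, its central subgroup $Z$, and the quotient group structure are uniformly definable from the ring structure on $A$.
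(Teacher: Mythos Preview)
Your proof is correct and essentially identical to the paper's: both realize $\mathrm{SL}_2(A)$ as the determinant-one quadruples in $A^4$ with matrix multiplication, and then pass to $\Gamma$ by quotienting out the $L_{\mathrm{rg}}$-definable equivalence relation corresponding to $Z$. The only cosmetic difference is that the paper describes the full-centre case as ``there exists $Z\in H$ with $Z^2=1$ and $C=BZ$'' (using the torus $H$), while you write the equivalent condition $\exists\lambda(\lambda^2=1\wedge\ldots)$ directly.
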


\begin{proof}
When $\Gamma=\mathrm{SL}_{2}(A)$, \ clearly $\Gamma$ is definable as the set
of $2\times2$ matrices with determinant $1$ and group operation matrix
multiplication. For the other cases, it suffices to note that the equivalence
relation `modulo $Z$' is definable by $B\thicksim C$ iff there exists
$Z\in\{\pm1_{2}\}$ with $C=BZ$, resp. $Z\in H$ with $Z^{2}=1$ and $C=BZ$.
\end{proof}

\bigskip

To complete the proof of Theorem \ref{sl2} it remains to establish
\textbf{Step 1 }and \textbf{Step 2 }below.

We take $v=v(1)$ as another parameter, and set $w=uvu=\left(
\begin{array}
[c]{cc}%
0 & 1\\
-1 & 0
\end{array}
\right)  .$ Then $u(\lambda)^{w}=v(\lambda)$, so $V:=v(A)=U^{w}$ is definable.
Note the identity (for $\xi\in A^{\ast}$):%
\begin{equation}
h(\xi)=v(\xi)u(\xi^{-1})v(\xi)w^{-1}=w^{-1}u(\xi)w\cdot u(\xi^{-1})\cdot
w^{-1}u(\xi). \label{h-identity}%
\end{equation}

\noindent\textbf{Step 1:} The ring isomorphism from $A$ to $U\subset
\mathrm{M}_{2}(A)$ is definable. Indeed, this is just the mapping%
\[
r\longmapsto\left(
\begin{array}
[c]{cc}%
1 & r\\
0 & 1
\end{array}
\right)  .
\]

\noindent\textbf{Step 2:} The map $\theta$ sending $g=(a,b;c,d)$ to
$(u(a),u(b);u(c),u(d))\in\Gamma^{4}$ is definable; this is a group isomorphism
when $U$ is identified with $A$ via $u(\lambda)\longmapsto\lambda$.

Assume for simplicity that $\Gamma=\mathrm{SL}_{2}(A)$. We start by showing
that the restriction of $\theta$ to each of the subgroups $U,~V\,,~H$ is
definable. Recall that $u(0)=1$ and $u(1)=u$.

If $g\in U$ then $g\theta=(u,g;1,u)$. If $g=v(-\lambda)\in V$ then
$g^{-w}=u(\lambda)\in U$ and $g\theta=(u,1;g^{-w},u).$

Suppose $g=h(\xi)\in H$. Then $g=w^{-1}xwyw^{-1}x$ where $x=u(\xi),$
$y=u(\xi^{-1}),$ and $g\theta=(y,1;1,x)$. So $g\theta=(y_{1},y_{2};y_{3}%
,y_{4})$ if and only if
\begin{align*}
~y_{4}\ast y_{1}  &  =u,~y_{2}=y_{3}=1,\\
g  &  =w^{-1}y_{4}wy_{1}w^{-1}y_{4}.
\end{align*}
Thus the restriction of $\theta$ to $H$ is definable.

Next, set%
\[
W:=\left\{  x~\in\Gamma\mid x_{\mathfrak{p}}\in\{1,w\}\text{ for every
}\mathfrak{p}~\right\}  .
\]
To see that $W$ is definable, observe that an element $x$ is in $W$ if and
only if there exist $y,z\in u(A_{\{0,1\}})$ such that
\[
x=yz^{w}y\text{ and }x^{4}=1.
\]
Note that $u(A_{\{0,1\}})$ is definable by (the proof of) Proposition \ref{P1}.

Put%
\[
\Gamma_{1}=\{g\in\Gamma\mid g_{11}\in A^{\ast}\}.
\]

If $g=(a,b;c,d)\in\Gamma_{1}$ then $g=\widetilde{v}(g)\widetilde{h}%
(g)\widetilde{u}(g)$ where%
\begin{align*}
\widetilde{v}(g)  &  =v(-a^{-1}c)\in V\\
\widetilde{h}(g)  &  =h(a^{-1})\in H\\
\widetilde{u}(g)  &  =u(a^{-1}b)\in U.
\end{align*}
This calculation shows that in fact $\Gamma_{1}=VHU$, so $\Gamma_{1}$ is
definable; these three functions on $\Gamma_{1}$ are definable since%
\begin{align*}
x  &  =\widetilde{v}(g)\Longleftrightarrow x\in V\cap HUg\\
y  &  =\widetilde{u}(g)\Longleftrightarrow y\in U\cap HVg\\
z  &  =\widetilde{h}(g)\Longleftrightarrow z\in H\cap VgU.
\end{align*}

Let $g=(a,b;c,d)$. Then $gw=(-b,a;-d,c)$. We claim that there exists $x\in W$
such that $gx\in\Gamma_{1}$. Indeed, this may be constructed as follows: If
$a_{\mathfrak{p}}\in\mathfrak{o}_{\mathfrak{p}}^{\ast}$ take $x_{\mathfrak{p}%
}=1$. If $a_{\mathfrak{p}}\in\mathfrak{po}_{\mathfrak{p}}$ and
$b_{\mathfrak{p}}\in\mathfrak{o}_{\mathfrak{p}}^{\ast}$ take $x_{\mathfrak{p}%
}=w$. If both fail, take $x_{\mathfrak{p}}=1$ when $a_{\mathfrak{p}}\neq0$ and
$x_{\mathfrak{p}}=w$ when $a_{\mathfrak{p}}=0$ and $b_{\mathfrak{p}}\neq0$.
This covers all possibilities since for almost all $\mathfrak{p}$ at least one
of $a_{\mathfrak{p}}$, $b_{\mathfrak{p}}$ is a unit in $\mathfrak{o}%
_{\mathfrak{p}},$ and $a_{\mathfrak{p}}$, $b_{\mathfrak{p}}$ are never both zero.

As $gx\in\Gamma_{1},$ we may write%
\[
gx=\widetilde{v}(gx)\widetilde{h}(gx)\widetilde{u}(gx)\text{.}%
\]

We claim that the restriction of $\theta$ to $W$ is definable. Let $x\in W$
and put $P=\{\mathfrak{p}~\mid~x_{\mathfrak{p}}=1\},$ $Q=\{\mathfrak{p}%
~\mid~x_{\mathfrak{p}}=w\}$. Then $(u^{x})_{\mathfrak{p}}$ is $u$ for
$\mathfrak{p}\in P$ and $v$ for $\mathfrak{p}\in Q$, so $u^{x}\in\Gamma_{1}$
and
\[
\widetilde{u}(u^{x})_{\mathfrak{p}}=\left\{
\begin{array}
[c]{ccc}%
u &  & (\mathfrak{p}\in P)\\
1 &  & (\mathfrak{p}\in Q)
\end{array}
\right.  .
\]
Recalling that $u=u(1)$ and $1=u(0)$ we see that%
\[
x\theta=\left(
\begin{array}
[c]{cc}%
\widetilde{u}(u^{x}) & \widetilde{u}(u^{x})^{-1}u\\
u^{-1}\widetilde{u}(u^{x}) & \widetilde{u}(u^{x})
\end{array}
\right)  .
\]

We can now deduce that $\theta$ is definable. Indeed, $g\theta=A$ holds if and
only if there exists $x\in W$ such that $gx\in\Gamma_{1}$ and
\[
A\cdot x\theta=\widetilde{v}(gx)\theta\cdot\widetilde{h}(gx)\theta
\cdot\widetilde{u}(gx)\theta
\]
(of course the products here are matrix products, definable in the language of
$\Gamma$ in view of Proposition \ref{P1}).

This completes the proof of Theorem \ref{sl2}\ for $\Gamma=\mathrm{SL}_{2}%
(A)$. When $\Gamma=\mathrm{SL}_{2}(A)/Z$, the same formulae now define
$\theta$ as a map from $\Gamma$ into the set of $2\times2$ matrices with
entries in $U$ modulo the appropriate definable equivalence relation.
$\blacksquare$

\bigskip

Now we turn to the proof of Theorem \ref{high_rank}. This largely follows
\S \ref{bisection}, but is simpler because we are dealing here with `nice'
rings. Henceforth $G$ denotes a simple Chevalley-Demazure group scheme of rank
at least $2$. The root subgroup associated to a root $\alpha$ is denoted
$U_{\alpha}$, and $Z$ denotes the centre of $G$. Put $\Gamma=G(A)$.

Let $S$ be any integral domain with infinitely many units. According to
Theorem \ref{thm:double centralizer} we have%
\[
U_{\alpha}(S)Z(S)=\mathrm{Z}\left(  C_{G(S)}(v)\right)
\]
whenever $1\neq v\in U_{\alpha}(S).$ This holds in particular for the rings
$S=\mathfrak{o}_{\mathfrak{p}}$. Take $u_{\alpha}\in U_{\alpha}(A)$ to have
$\mathfrak{p}$-component $x_{\alpha}(1)$ for each $\mathfrak{p}\in\mathcal{P}$
(or every $\mathfrak{p}$ when $A=\mathbb{A}$); then%
\[
U_{\alpha}(A)Z(A)=\mathrm{Z}\left(  C_{G(A)}(u_{\alpha})\right)  .
\]
Given this, the proof of Corollary \ref{defibl} now shows that $U_{\alpha}(A)$
is a definable subgroup of $\Gamma$ (the result is stated for integral domains
but the argument remains valid, noting that in the present case $A/2A$ is finite).

Associated to each root $\alpha$ there is a morphism $\varphi_{\alpha
}:\mathrm{SL}_{2}\rightarrow G$ sending $u(r)$ to $x_{\alpha}(r)$ and $v(r)$
to $x_{-\alpha}(-r)$ (see \cite{C}, Chapter 6 or \cite{St}, Chapter 3). This
morphism is defined over $\mathbb{Z}$ and satisfies
\[
K_{\alpha}:=\mathrm{SL}_{2}(A)\varphi_{\alpha}\leq G(A).
\]

\begin{lemma}
\label{Klemma}$K_{\alpha}=U_{-\alpha}(A)U_{\alpha}(A)U_{-\alpha}(A)U_{\alpha
}(A)U_{-\alpha}(A)U_{\alpha}(A)U_{-\alpha}(A)U_{\alpha}(A).$
\end{lemma}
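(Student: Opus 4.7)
The plan is to prove the equivalent statement $\mathrm{SL}_2(A) = VUVUVUVU$ (with $U = u(A)$, $V = v(A)$) and then apply the morphism $\varphi_\alpha$, which sends $U$ to $U_\alpha(A)$ and $V$ to $U_{-\alpha}(A)$ bijectively. So the task is essentially to produce a bounded-length Bruhat-style product decomposition of $\mathrm{SL}_2(A)$ using only the two unipotent root subgroups.

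The main input is already in hand from Step 2 of the proof of Theorem~\ref{sl2}: for every $g \in \mathrm{SL}_2(A)$, one constructs an element $x \in W$ (where $W$ is the set of adeles whose components lie in $\{1,w\}$) such that $gx \in \Gamma_1 = VHU$. Consequently
\[
g = \widetilde{v}(gx)\,\widetilde{h}(gx)\,\widetilde{u}(gx)\,x^{-1} \in V \cdot H \cdot U \cdot W^{-1}.
\]
So the whole question reduces to writing $H$ and $W^{-1}$ as short alternating products in $U$ and $V$.

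For $H$, I would use the identity $h(\xi) = v(\xi)u(\xi^{-1})v(\xi)w^{-1}$ (displayed as (\ref{h-identity})) together with $w = uvu$, hence $w^{-1} = u(-1)v(-1)u(-1) \in UVU$. This gives $H \subseteq VUV \cdot UVU = VUVUVU$. For $W^{\pm 1}$, I would simply write each component of $x$ as a product in $UVU$: if $x_\mathfrak{p}=1$ take $u(0)v(0)u(0)$, and if $x_\mathfrak{p}=w$ take $u(1)v(1)u(1)$; assembling the components defines adeles $a,b,c \in A$ with $x = u(a)v(b)u(c)$, so $W \subseteq UVU$ (and likewise $W^{-1}$).

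Combining these, and repeatedly collapsing consecutive occurrences of the same group ($U \cdot U = U$, $V \cdot V = V$), yields
\[
g \in V \cdot (VUVUVU) \cdot U \cdot (UVU) = VUVUVUVU,
\]
a product of $8$ alternating factors. Applying $\varphi_\alpha$ gives the claimed expression for $K_\alpha$. The only subtlety worth flagging is the global-versus-local issue for $W$: one must verify that the componentwise recipe for $x = u(a)v(b)u(c)$ indeed produces adeles $a,b,c$, but since the components of $a,b,c$ take only the values $0$ and $1$, this is immediate. No deeper obstacle arises; the lemma is essentially a bookkeeping consequence of Step 2 of Theorem~\ref{sl2}.
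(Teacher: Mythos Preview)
Your proof is correct and follows essentially the same route as the paper. The paper's one-line proof (``follows from (\ref{h-identity}) and the fact that $w=uvu$'') is exactly the computation you have written out: pull back to $\mathrm{SL}_2(A)$, use the decomposition $g\in VHU\cdot W^{-1}$ from Step~2 of Theorem~\ref{sl2}, expand $H\subseteq VUV\cdot w^{-1}\subseteq VUVUVU$ via (\ref{h-identity}) and $w=uvu$, expand $W^{-1}\subseteq UVU$ componentwise, and collapse adjacent factors to obtain $VUVUVUVU$.
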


\begin{proof}
This follows from the corresponding identity in $\mathrm{SL}_{2}(A),$ which in
turn follows from (\ref{h-identity}) and the fact that $w=uvu$.
\end{proof}

\bigskip

We may thus infer that each $K_{\alpha}$ is a definable subgroup of $G(A)$.
Fixing a root $\gamma$, we identify $A$ with $U_{\gamma}(A)$ by $r\longmapsto
r^{\prime}=x_{\gamma}(r).$ Proposition \ref{P1} now shows that $A$ is
definable in $G(A)$.

As above, $G(A)$ is $A$-definable as a set of $d\times d$ matrices that
satisfy a family of polynomial equations over $\mathbb{Z}$, with group
operation matrix multiplication.

To complete the proof we need to establish

\medskip\textbf{Step }$\mathbf{1}^{\mathbf{\prime}}$\textbf{:} The ring
isomorphism $A\rightarrow U_{\gamma}(A);~r\longmapsto r^{\prime}=x_{\gamma
}(r)\in\mathrm{M}_{d}(A)$ is definable in ring language. This follows from
(\ref{root_matrix}) in \S \ref{bisection}.\bigskip

\textbf{Step }$\mathbf{2}^{\prime}$\textbf{:} The group isomorphism
$\theta:G(A)\rightarrow G(A^{\prime})\subseteq\mathrm{M}_{d}(U_{\gamma}(A))$
is definable in group language.

\medskip

To begin with, Lemma \ref{def_theta} shows that for each root $\alpha$, the
restriction of $\theta$ to $U_{a}(A)$ is definable (this is established for
$A$ an integral domain, but the proof is valid in general). Next, we observe
that $G(A)$ has finite elementary width:

\begin{lemma}
There is is finite sequence of roots $\beta_{i}$ such that%
\[
G(A)=\prod_{i=1}^{N}U_{\beta_{i}}(A).
\]

\end{lemma}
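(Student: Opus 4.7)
The plan is to exploit the product structure of $A$. Since $G$ is an affine $\mathbb{Z}$-scheme of finite type, the functor $R\mapsto G(R)$ preserves (restricted) direct products of rings, giving
$$G(A) \;\cong\; \prod\nolimits_{\mathfrak{p}}^{\prime} G(R_{\mathfrak{p}}), \qquad U_{\beta}(A) \;\cong\; \prod\nolimits_{\mathfrak{p}}^{\prime} U_{\beta}(R_{\mathfrak{p}}),$$
where $R_{\mathfrak{p}} = \mathfrak{o}_{\mathfrak{p}}$ in the pure product case, and $R_{\mathfrak{p}} \in \{\mathfrak{o}_{\mathfrak{p}}, K_{\mathfrak{p}}\}$ with the usual adelic restriction when $A = \mathbb{A}$. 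Consequently, it suffices to produce a fixed sequence $\beta_{1},\ldots,\beta_{N}$ of roots (with $N$ depending only on $G$) such that at every local factor
$$G(R_{\mathfrak{p}}) \;=\; \prod_{i=1}^{N} U_{\beta_{i}}(R_{\mathfrak{p}}).$$
Once this is available, any $g=(g_{\mathfrak{p}})\in G(A)$ can be decomposed place by place as $g_{\mathfrak{p}}=\prod_{i}x_{\beta_{i}}(r_{i,\mathfrak{p}})$, and the coefficients assembled into adelic elements $r_{i}=(r_{i,\mathfrak{p}})_{\mathfrak{p}}\in A$; the restricted-product condition on each $r_{i}$ is forced by the one on $g$, because the local decomposition in $G(\mathfrak{o}_{\mathfrak{p}})$ produces coefficients in $\mathfrak{o}_{\mathfrak{p}}$.

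For each type of local factor I would invoke a known structural result. Over a local field $K_{\mathfrak{p}}$ the Bruhat decomposition $G(K_{\mathfrak{p}}) = U^{-}(K_{\mathfrak{p}})\,T(K_{\mathfrak{p}})\,N(K_{\mathfrak{p}})\,U^{+}(K_{\mathfrak{p}})$, combined with the six-factor expression of each elementary torus element $h_{\alpha}(t)$ as a word in root elements, yields a bounded-length product decomposition. Over a complete DVR $\mathfrak{o}_{\mathfrak{p}}$, Abe's theorem (\cite{A}, Prop.~1.6) gives $G(\mathfrak{o}_{\mathfrak{p}})=T(\mathfrak{o}_{\mathfrak{p}})\cdot E(\mathfrak{o}_{\mathfrak{p}})$, and \cite{HSVZ}, Cor.~1 bounds the elementary width of $E(\mathfrak{o}_{\mathfrak{p}})$ uniformly in $\mathfrak{p}$; together they yield an analogous bounded decomposition. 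The several local sequences so produced are then merged into a single fixed sequence $\beta_{1},\ldots,\beta_{N}$ that works at every place, using the Chevalley commutator formula to reorder root factors into a prescribed order at the cost of inflating $N$ by a bounded amount (cf.\ the construction of $X_{N}$ in \S\ref{bisection}).

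The main obstacle is securing a genuinely \emph{uniform} bound across all local factors simultaneously and with a \emph{single} sequence of roots. Uniformity in the residue characteristic and residue-field size of $\mathfrak{o}_{\mathfrak{p}}$ is exactly what the cited results supply. A second subtlety arises when $G$ is not simply connected: a priori $G(\mathfrak{o}_{\mathfrak{p}})$ need not coincide with its elementary subgroup, but for the complete local rings appearing here the obstruction is of $K_{1}$-type and bounded, so it can be absorbed into $N$. Forcing a common ordering of roots through the commutator formula is then routine and yields the claimed global decomposition $G(A)=\prod_{i=1}^{N}U_{\beta_{i}}(A)$.
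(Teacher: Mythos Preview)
Your reduction to the local factors is exactly what the paper does, but the paper chooses a different local input that sidesteps most of the work you outline. Rather than invoking Abe \cite{A} and \cite{HSVZ} at each $\mathfrak{o}_{\mathfrak{p}}$, the paper observes that every $\mathfrak{o}_{\mathfrak{p}}$ is a PID (or a field) and appeals to Steinberg \cite{St}, Corollary~2 to Theorem~18: for $R$ a PID, $G(R)$ is a product of boundedly many of the rank-one subgroups $K_{\alpha}=\varphi_{\alpha}(\mathrm{SL}_{2}(R))$, with the bound $N_{1}$ depending only on the root system (number of positive roots plus number of fundamental roots plus the longest Weyl-group word). Uniformity in $\mathfrak{p}$ is therefore automatic, and no separate argument for the non--simply-connected case is needed. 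The previously proved Lemma~\ref{Klemma} then writes each $K_{\alpha}(A)$ as $(U_{-\alpha}(A)U_{\alpha}(A))^{4}$, giving the explicit $N=8nN_{1}$.

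Your route would also work, but two of your steps are under-argued. First, the appeal to \cite{HSVZ} for a uniform bound on the elementary width of $E(\mathfrak{o}_{\mathfrak{p}})$ is plausible (those bounds depend only on $\Phi$), but you should say so explicitly rather than assert it. Second, and more seriously, your treatment of the non--simply-connected case---``the obstruction is of $K_{1}$-type and bounded, so it can be absorbed into $N$''---is not a proof; Abe's Proposition~1.6 is stated for simply connected $G$, and you would have to supply the missing bound on $T(\mathfrak{o}_{\mathfrak{p}})$ modulo the elementary torus elements. The paper's PID argument avoids this entirely. Finally, the ``reorder via the commutator formula'' step is unnecessary: the fixed sequence $\left(\prod_{j} U_{\alpha_{j}}\right)^{N}$ over all roots (exactly the set $X_{N}$ of \S\ref{bisection}) already absorbs any local decomposition of length at most~$N$, so no reordering is required.
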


\begin{proof}
This relies on results from Chapter 8 of \cite{St}. Specifically, Corollary 2
to Theorem 18 asserts that if $R$ is a PID, then (in the above notation)
$G(R)$ is generated by the groups $K_{\alpha}$. It is clear from the proof
that each element of $G(R)$ is in fact a product of bounded length of elements
from various of the $K_{\alpha}$; an upper bound is given by the sum $N_{1}$,
say, of the following numbers: the number of positive roots, the number of
fundamental roots, and the maximal length of a Weyl group element as a product
of fundamental reflections. If the positive roots are $\alpha_{1}%
,\ldots,\alpha_{n}$ it follows if $R$ is a PID that
\[
G(R)=\left(  \prod_{j=1}^{n}K_{\alpha_{j}}\right)  \cdot\left(  \prod
_{j=1}^{n}K_{\alpha_{j}}\right)  \cdot\ldots\cdot\left(  \prod_{j=1}%
^{n}K_{\alpha_{j}}\right)  ~\ \text{(}N_{1}\text{ factors).}%
\]
As each of the rings $\mathfrak{o}_{\mathfrak{p}}$ is a PID (or a field), the
analogous statement holds with $A$ in place of $R$.

The result now follows by Lemma \ref{Klemma}, taking $N=8nN_{1}$.
\end{proof}

\bigskip

Thus $\theta$ is definable as follows: for $g\in G(A)$ and $M\in\mathrm{M}%
_{d}(U_{\gamma}(A)),$ $g\theta=M$ if and only if there exist $v_{i}\in
U_{\beta_{i}}(A)$ and $M_{i}\in\mathrm{M}_{d}(U_{\gamma}(A))$ such that
$g=v_{1}\ldots v_{N}$, $M=M_{1}\cdot\ldots\cdot M_{N}$ and $M_{i}=v_{i}\theta$
for each $i$. Here $M_{1}\cdot M_{2}$ etc denote matrix products, which are
definable in the language of $G$ because the ring operations on $A^{\prime
}=U_{\gamma}(A)$ are definable in $G$.

This completes the proof. \ $\blacksquare$

\section{Appendix \label{comfor}}

\bigskip We recall some commutator formulae (\cite{C}, Thms. 5.2.2 and 4.1.2,
or \cite{St}, Chapter 3,\ Cor. to Lemma 15). Here $\Phi$ is a root system,
$\alpha,~\beta\in\Phi$. If $\alpha+\beta\notin\Phi$ then $[x_{\alpha
}(r),x_{\beta}(s)]=1.$\ If $\alpha+\beta\in\Phi$\ then $\alpha$ and $\beta$
span a root system $\Phi_{1}$ of rank $2$ and there are three possibilities
(assuming w.l.o.g. that $\alpha$ is short, if $\alpha$ and $\beta$ are of
different lengths). Here $\varepsilon=\pm1$.

\bigskip

$\Phi_{1}=A_{2}:$%
\begin{align*}
\lbrack x_{\alpha}(r),x_{\beta}(s)]  &  =x_{\alpha+\beta}(\varepsilon rs)\\
\lbrack x_{-\alpha}(r),x_{\alpha+\beta}(s)]  &  =x_{\beta}(\varepsilon rs)
\end{align*}

$\Phi_{1}=B_{2}:$%
\begin{align*}
\lbrack x_{\alpha}(r),x_{\beta}(s)]  &  =x_{\alpha+\beta}(\varepsilon
rs)x_{2\alpha+\beta}(\pm r^{2}s)\\
\lbrack x_{\alpha}(r),x_{\alpha+\beta}(s)]  &  =x_{2\alpha+\beta}(\pm2rs)\\
\lbrack x_{-\alpha}(r),x_{\alpha+\beta}(s)]  &  =x_{\beta}(\pm2rs)\\
\lbrack x_{-\alpha}(r),x_{2\alpha+\beta}(s)]  &  =x_{\alpha+\beta}(\pm
rs)x_{\beta}(\pm r^{2}s)\\
\lbrack x_{\alpha+\beta}(r),x_{-\beta}(s),]  &  =x_{\alpha}(\varepsilon
rs)x_{2\alpha+\beta}(\pm r^{2}s)
\end{align*}
$\Phi_{1}=G_{2}:$%
\begin{align*}
\lbrack x_{\beta}(r),x_{\alpha}(s)]  &  =x_{\alpha+\beta}(\varepsilon
rs)x_{2\alpha+\beta}(-\varepsilon rs^{2})x_{3\alpha+\beta}(-rs^{3}%
)x_{3\alpha+2\beta}(\pm r^{2}s^{3})\\
\lbrack x_{\alpha+\beta}(r),x_{a}(s)]  &  =x_{2\alpha+\beta}(-2rs)x_{3\alpha
+\beta}(-3\varepsilon rs^{2})x_{3\alpha+2\beta}(\pm3r^{2}s)\\
\lbrack x_{2\alpha+\beta}(r),x_{a}(s)]  &  =x_{3\alpha+\beta}(3\varepsilon
rs)\\
\lbrack x_{\alpha+\beta}(r),x_{-\beta}(s)]  &  =x_{\alpha}(-\varepsilon
rs)x_{2\alpha+\beta}(\pm r^{2}s)x_{3\alpha+2\beta}(\pm r^{3}s)x_{3\alpha
+\beta}(\pm r^{3}s^{2})
\end{align*}
(There are other possible combinations of signs, depending on the choice of
Chevalley basis. We assume for convenience that the basis is chosen so as to
obtain this particular form for the commutator formulae.)

\section{Acknowledgements}

We would like to thank Matthias Aschenbrenner, Brian Conrad, Jamshid
Derakhshan, Philippe Gille, Bob Guralnick, Franziska Jahnke, Martin Liebeck,
Evgeny Plotkin, Gopal Prasad, Donna Testermann, Nikolai Vavilov, Richard Weiss
and Boris Zilber for their valuable input on various questions.

\end{document}